\documentclass[11pt,a4paper]{amsart}
\usepackage{amsmath,amsthm,amsfonts,amscd,amssymb,eucal,latexsym,mathrsfs}
\usepackage[all]{xy}
\usepackage{latexsym, amssymb, amscd, mathrsfs, graphics, graphicx, array,verbatim}

\usepackage{MnSymbol}

\theoremstyle{plain}
\newtheorem{theorem}{Theorem}
\newtheorem{corollary}[theorem]{Corollary}
\newtheorem{lemma}[theorem]{Lemma}
\newtheorem{proposition}[theorem]{Proposition}

\theoremstyle{definition}
\newtheorem{remark}[theorem]{Remark}

\newtheorem{example}[theorem]{Example}

\newtheorem{question}[theorem]{Question}
\newtheorem{problem}[theorem]{Problem}
\newtheorem{definition}[theorem]{Definition}

\DeclareMathOperator{\Hom}{Hom}

\DeclareMathOperator{\coker}{coker}

\def\min{\mathop{\mathrm{min}}\nolimits}
\def\max{\mathop{\mathrm{max}}\nolimits}

\newcommand{\RI}{{\mathbb R}}\newcommand{\IE}{{\mathbb E}}\newcommand{\IR}{{\mathbb R}}
\newcommand{\ZI}{{\mathbb Z}}\newcommand{\IN}{{\mathbb N}}

\newcommand{\smatr}[4]{\left(\begin{smallmatrix} #1 & #2 \\ #3 & #4\end{smallmatrix}\right)}

\newcommand{\diag}[3]{\left(\begin{smallmatrix} #1 & 0&0 \\ 0&#2 & 0\\ 0&0& #3\end{smallmatrix}\right)}

\newcommand{\Id}{\mathrm{Id}}

\newcommand{\cF}{{\mathcal F}}

\newcommand{\sC}{{\mathscr C}}\newcommand{\sF}{{\mathscr F}}

\newcommand{\id}{{\rm Id}}

  \newcommand{\F}{\mathbf F} 
 
\newcommand{\G}{\Gamma}

\newcommand{\Hyp}{\mathrm{Hyp}}

\newcommand{\GL}{\mathrm {GL}}\newcommand{\PGL}{\mathrm {PGL}}
\newcommand{\SL}{\mathrm {SL}}\newcommand{\PSL}{\mathrm {PSL}}

\newcommand{\Sp}{\mathrm{Sp}}

\newcommand{\FI}{{\bf{F}}}

\newcommand{\e}{\varepsilon}

\newcommand{\val}{\mathrm{val}}

\newcommand{\impl}{\Rightarrow}
\newcommand{\ssi}{\Leftrightarrow}
\newcommand{\inj}{\hookrightarrow}
\newcommand{\surj}{\twoheadrightarrow}

\newcommand{\egdef}{:=}

\newcommand{\del}{\partial}

\newcommand{\IP}{\mathbb{P}}

\usepackage{tikz}

\newcommand{\acts}{\curvearrowright}

\newcommand{\Lk}{\mathrm{Lk}~}

\newcommand{\Gbf}{{\mathbf{G}}}

\newcommand{\Gal}{\mathrm{Gal}}

\newcommand{\virt}{\mathrm{virt}}

\date{\today}

\title{Random groups and nonarchimedean lattices} 

\author{Sylvain Barr\'e}
\address{\hskip-\parindent
Sylvain Barr\'e, Universit\'e de Bretagne Sud, Universit\'e Europ\'eenne de Bretagne, France}
\email{sylvain.barre@univ.ubs.fr}
\author{Mika\"el Pichot}
\address{\hskip-\parindent
Mika\"el Pichot, Dept. of Mathematics \& Statistics, McGill University, Montreal, Quebec, Canada H3A 2K6}
\email{pichot@math.mcgill.ca}

\begin{document}

\begin{abstract}
We consider  models of random groups in which the typical group is of intermediate rank (in particular, it is not hyperbolic). These models are parallel to M. Gromov's well-known constructions and include for example a ``density model" for groups of intermediate rank. The main novelty is the higher rank nature of the random groups. They are randomizations of certain families of lattices in algebraic groups (of rank 2) over local fields.
\end{abstract}

\maketitle

This paper introduces models of random groups ``of higher rank''. The construction, basic properties, and applications are detailed in \textsection \ref{S - basic idea} to  \textsection\ref{propertyT} below, which we now summarize.

The construction  (see \textsection \ref{S - basic idea}) is rather general. If $\G'$ is a group which acts properly on a simply connected complex $X'$ of dimension 2 with $X'/\G'$ compact, and $\G''\subset \G'$ is a subgroup of ``very large'' finite index, then one can  choose at random a family of  $\G''$-orbits of 2-cells $Y\subset X'$ inside $X'$. Then let $X$ denote the universal cover of $X'':=X'\setminus Y$.
  The random group $\G$ is the group of  transformations of the Galois covering 
\[
X\surj {X''}/{\G''}.
\]
This construction leads to several distinct  models of random groups including a ``density model", following M. Gromov. The initial structural data  ($\G'$, $X'$,...) for the model is called the \emph{deterministic data}. The basic properties of $\G$ 
 depend on the deterministic data.

An idea of groups ``of intermediate rank'' was introduced in \cite{rd} in particular to address the following question, where $X$ is CAT(0) and $X/\G$ is compact:

\begin{center}
$\IR^2\inj X\impl \ZI^2\inj \G$?
\end{center}
(This is the ``periodic flat plane problem" which has been formulated in many places, see \cite{gromov78} for an early reference.) Since the assumption $\IR^2\inj X$ is equivalent to $X$ being non hyperbolic, the new models are relevant to the study of this question.    We will see that in some cases (depending on the deterministic data, the density parameter, etc.) the answer is positive ``generically", but that the precise relation between the two conditions ``$\IR^2\inj X$'' and ``$\ZI^2\inj \G$'' remains mysterious even for random groups associated with lattices in $\PSL_3$. 

Before turning to these models let us discuss briefly Gromov's original construction of random groups and the density model  introduced in \cite{gromov0} (see also \cite[\textsection 6]{gromov78}, \cite{gromov81} or \cite{gromov1}).

Let $\G_1$ be a  hyperbolic group in the sense of Gromov, and  take successive quotients $\G_1\surj\G_2\surj \cdots$, say 
\[
\G_{n+1}:=\G_n/\langle\langle R_n\rangle\rangle
\] 
where $R_n\subset \G_n$ is a finite set   of additional relations. As  explained in \cite{gromov0} the set $R_n$  can  ``in general'' be chosen so that: 
\begin{itemize}
\item[(i)] $\G_{n+1}$ stays hyperbolic 
\item[(ii)] $\G_n\surj \G_{n+1}$  is injective on larger and larger balls 
\end{itemize} 
Property (ii) ensures the existence of an infinite limit group $\G_\infty$, while the attribute ``in general"   accounts for the oversupply of choices in the construction; its precise meaning depends on the size and the nature of $R_n$.  For example, if $\G_1$ is torsion free and the sets $R_n$ consist of  a single relation which is a ``higher and higher" power of the $n^\mathrm{th}$ element in a list exhausting $\G_1$, then the limit $\G_\infty$ is a finitely generated infinite torsion group  \cite[\textsection 4.5.C]{gromov0}. Here  (i) and (ii) become geometric assertions relying on $K<0$, and the construction offers almost total freedom. (Gromov's construction is related to the Burnside problem --- the existence of infinite torsion groups was established  by Golod, and the existence of  infinite groups of finite exponent by Adian and Novikov, and by Olshanskii using the small cancellation theory.)

A prominent feature is the \emph{genericity of hyperbolic groups}, as put forward in \cite{gromov0} and illustrated by (i) above.  M. Gromov has since invented  several models of random groups and constructed many exotic infinite groups using them \cite{gromov1, gromov00, gromov01,ollivier,ghys}. We are  interested here in his so-called \emph{density model}, which studies ``one-step" random quotients $\G_n\surj \G_{n+1}$ for ``very large" random sets $R_n$ of ``very long" relations.  If one starts with a free group $F_r$ on $r$ generators and let $\delta$ denote the density parameter, then the random group in the density model is a quotient of the form $F_r/\langle\langle W_p\rangle\rangle$, where $W_p$ is a set of $\approx|S_p|^{\delta}$ words chosen uniformly independently at random in the sphere $S_p$ of radius $p$ in $F_r$.  Gromov shows that if $\delta<1/2$, then the resulting random group is hyperbolic with overwhelming probability as $p\to \infty$, while  if $\delta>1/2$ it is trivial (meaning 1 or $\pm$) with overwhelming probability.  In this model, small cancellations occur for $\delta<1/12$. (One can also start here with a  non elementary hyperbolic group $\G$ and take random quotients by elements in the spheres $S_p\subset \G$, $p\to\infty$; the same phase transition ``$\delta<1/2\impl$ hyperbolic" and ``$\delta>1/2\impl 1/\pm$"  is then valid provided that $\G$ is torsion free \cite{ollivier-paper}.)  An earlier model of Gromov called  the ``few relator model" studies the situation where $|W_p|$ is bounded. We refer to \cite{ollivier} for a survey of these groups.

The groups of intermediate rank constructed in \cite{rd} can be put on a ``rank interpolation line'':
\begin{center} 
\includegraphics[width=8cm]{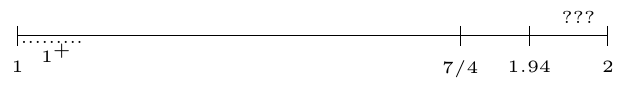}
\end{center}
 (we emphasis again that this only has a schematic value: as discussed in \cite{rd} already, the phenomenon of rank interpolation  is not unidimensional.) The two extreme cases in this picture are the hyperbolic groups (rank 1, or more generally the groups with isolated flats, of rank $1^+$) and the lattices in nonarchimedean groups (rank 2).
The value 1.94 refers to the bowtie group $\G_{\bowtie}$ introduced in \cite{rd} and further studied  in \cite{Haagerup}. The present paper constructs many groups whose rank is arbitrarily close to 2:  if the deterministic data arises from a nonarchimedean Lie group of rank 2, then the  ``rank'' of the random group (for example, the local rank in the sense of \cite[Definition 4.5]{chambers})   is as close to 2 as desired.  

We now formulate our main result in the special case of the density model  with deterministic data  the Cartwright--Steger lattices in $\PGL_3(\FI_q((y)))$ and their congruence subgroups. The techniques and constructions involved in the proof of this result apply in more general situations, and we will state and establish more general statements along the text. In fact, most of the assertions in Theorem \ref{intrdensity}, with the notable exception of the fact that ``$\delta<\frac 5 8\impl \ZI^2\inj \G$", will be proved under less restrictive assumptions.

The Cartwright--Steger lattices \cite{CS} are uniform lattices $\G_n<\PGL_n(\FI_q((y)))$ associated with the ring $R=\FI_q[y,1/y,1/(1+y)]\inj \FI_q(y)$. Their congruence subgroups $\G_n(I)$ correspond to ideals $I\triangleleft R$. The groups $\G_n$ act transitively on the vertices of the Bruhat--Tits building $X_n$ of $\PGL_n(\FI_q((y)))$. Below $n=3$; the random groups discussed in Theorem \ref{intrdensity} have as deterministic data the lattices $(X_3,\G_3,\{\G_3(I_p)\})$ associated with the Cartwright--Steger lattices of rank 2.

\begin{theorem} \label{intrdensity} 
Let $q$ be prime power. Fix two sequences $(f_p)_{p\geq 1}$ and $(s_p)_{p\geq 1}$ where $f_p\in\FI_q[y]$ is a monic irreducible polynomial prime to $y$ and $y+1$ and  $s_p\geq 1$ is an integer. 
If the density parameter $\delta$ satisfies 
\[
\delta<{5\over 8}
\] 
then the random group $\G$ in the density model of parameter $\delta$ with (Cartwright--Steger) deterministic data $(X_3,\G_3,\{\G_3(I_p)\})$ satisfies
\[
\ZI^2\inj \G
\]
with overwhelming probability, where the congruence subgroups $\G_3(I_p)_{p\geq 1}$ are associated with the ideal $I_p=\langle f_p^{s_p}\rangle$ generated by $f_p^{s_p}$ in $\FI_q[y,1/y,1/(1+y)]$. 
If in addition $s_p\geq k$ for $p$ large enough, then  
$\ZI^2\inj \G$ with overwhelming probability whenever
\[
\delta< {7k-3\over 7k+1}
\]
(which can be made as close to 1 as desired, independently of $q$). Furthermore, if 
\[
\delta<{q-1\over q-2}
\]
(which can be made as close to 1 as desired, independently of $k$) then $\G$ acts freely uniformly on a space $X$ of dimension 2 with the geodesic extension property, and if 
\[
\delta<{1\over 2}~\mathrm{~and~}~ q\geq 5
\]
then $\G$ has Kazhdan's property T with overwhelming probability. In addition, if $\delta_0$ is an arbitrary real number $<1$ given in advance, then there exists $q_0$ such that if $q\geq q_0$, then $\G$ has Kazhdan's property T with overwhelming probability for every $\delta<\delta_0$. If on the other hand  
\[
\delta>{q\over q-1}
\]
then $\G$ doesn't have property FA.  Finally, if 
\[
\delta<{1\over 2}
\]
(and $q$ is arbitrary), then $(X,\G)$ has the extension rigidity property of \cite{chambers} (namely, it ``remembers" the building $X_3$ it comes from). Finally, if $\delta_0<1$ and $\e>0$ are real numbers given in advance, then there exists $q_0$ such that if $q\geq q_0$, then $(X,\G)$ has  local rank (in the sense of \cite{chambers}) uniformly $\geq 2-\e$ with overwhelming probability for every $\delta<\delta_0$.
\end{theorem}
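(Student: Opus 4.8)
The plan is to reduce the assertion to the local structure of $X$ at its vertices, which is entirely controlled by the building $X_3$, and then to combine a first–moment bound over vertices with the robustness of the local rank of a thick generalized triangle under the deletion of a bounded number of edges. First I would recall from \cite{chambers} the two ingredients needed. On the one hand, since $X$ is the universal cover of the finite complex $X''/\G''=(X_3\setminus Y)/\G''$ and the congruence subgroup $\G''=\G_3(I_p)$ acts freely on $X_3$ (the subgroups attached to the proper nonzero ideals $I_p=\langle f_p^{s_p}\rangle$ with $f_p$ prime to $y$ and $y+1$ are torsion free; and since $\delta<\delta_0<1<(q-1)/(q-2)$ for the $q$ we will use, the earlier part of the theorem already places us in the regime where $\G$ acts freely uniformly on the $2$–complex $X$), the link in $X$ of any vertex $v$ is isomorphic to the link of the corresponding vertex of $X_3$, namely the incidence graph of $\mathbb P^2(\FI_q)$ — a thick generalized triangle of order $q$, with $(q^2+q+1)(q+1)$ edges — from which one has deleted exactly the edges corresponding to those chambers of $X_3$ through $v$ that lie in $Y$. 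On the other hand, I would record the quantitative content of \cite[Definition 4.5]{chambers}: there is a function $\rho$ with $\rho(q,0)=2$ such that the local rank of $(X,\G)$ at $v$ is $\ge\rho(q,j)$ as soon as at most $j$ edges have been deleted from $\Lk(v)$, and, for each fixed $j$, $\rho(q,j)\to 2$ as $q\to\infty$ — deleting boundedly many edges from a generalized triangle of large order is asymptotically negligible for the local rank.

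With this in hand, let $\delta_0<1$ and $\e>0$ be given. First fix an integer $j_0$ with $j_0/(j_0+1)>\delta_0$, and then choose $q_0$ so large that $\rho(q,j_0)\ge 2-\e$ for all $q\ge q_0$. Fixing $q\ge q_0$, it now suffices to prove that, for every $\delta<\delta_0$, with overwhelming probability no vertex of $X$ meets more than $j_0$ chambers of $Y$; on that event each link of $X$ is obtained from the order–$q$ generalized triangle by deleting at most $j_0$ edges, so by the recalled bound the local rank of $(X,\G)$ is $\ge\rho(q,j_0)\ge 2-\e$ at every vertex, which is the claim. The probability estimate is a direct first–moment computation in the density model. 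Write $n_p=[\G_3:\G_3(I_p)]\to\infty$; then the finite complex $X''/\G''$ has $V_p=\Theta(n_p)$ vertices and $C_p=\Theta(q^3n_p)$ chambers, each vertex of $X_3/\G''$ lying on $d=(q^2+q+1)(q+1)\sim q^3$ chambers, and in the density model one removes $m_p\approx C_p^{\,\delta}$ of these $C_p$ chambers, chosen uniformly. For a fixed vertex $v$, a union bound over the $(j_0+1)$–subsets of the chambers through $v$ gives
\[
\Pr\big[\,v\ \text{lies on}\ \ge j_0+1\ \text{removed chambers}\,\big]\ \le\ \binom{d}{j_0+1}\Big(\tfrac{m_p}{C_p}\Big)^{j_0+1}\big(1+o(1)\big)\ =\ O\!\big(d^{\,j_0+1}\,C_p^{\,(\delta-1)(j_0+1)}\big),
\]
and summing over the $V_p$ vertices yields $O\!\big(q^{3j_0}\,C_p^{\,\delta(j_0+1)-j_0}\big)$, which tends to $0$ as $p\to\infty$ precisely because $\delta<\delta_0<j_0/(j_0+1)$ makes the exponent $\delta(j_0+1)-j_0$ negative.

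The substantial point is the input from \cite{chambers}: one has to make precise — by citing or reproving the relevant part of \cite[Definition 4.5]{chambers} — that the local rank degrades continuously when a bounded number of edges is removed from a link and returns to $2$ as the order $q$ of the generalized triangle grows; equivalently, that a thick generalized $3$–gon of order $q$ with $j_0$ edges deleted is, for $q$ large, as close to a rank–$2$ spherical building as one wishes. If the local rank at scale $\reg$ turns out to depend not on $\Lk(v)$ alone but on the ball of radius $\reg$ about $v$ in $X$, the same argument applies verbatim, the only change being that $d$ is replaced by the number $q^{O(\reg)}$ of chambers of $X_3$ in that ball, which leaves the critical exponent $\delta(j_0+1)-j_0$ unchanged. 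Everything else — the local identification of $X$ with $X_3$, the torsion–freeness of the $\G_3(I_p)$, and the union bound — is routine, and the genericity threshold is governed solely by $j_0$, hence by $\delta_0$; the sole role of enlarging $q$ is to absorb the finitely many tolerated local defects into the $\e$ of the local rank.
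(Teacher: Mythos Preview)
Your proposal treats only the final local-rank assertion of the theorem. For that assertion it is correct and is exactly the argument the paper has in mind: your first-moment bound over vertices is the special case of Proposition~\ref{d12}(1) (at most $j_0$ elements of $A$ in any ball when $\delta<j_0/(j_0+1)$), and the paper does not spell out the local-rank conclusion separately --- it relies on the same template as the proof of generic property~T in \S\ref{propertyT}, with the spectral robustness of Lemma~\ref{L-large order} replaced by the robustness of local rank under bounded edge deletion that you (correctly) locate in \cite{chambers}.
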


The paper is structured as follows. An analog of Gromov's few relator model is studied in \textsection \ref{S - pfp I} in relation with the periodic flat plane problem. The density model is studied from \textsection \ref{S - critical density} onwards, where we introduce ``critical densities" for various properties of groups in this model. In our framework, the density parameter $\delta$ regulates the size of the random subset $Y\subset X'$. For example, we have a critical density $\delta_T$ for  Kazhdan's property T, $\delta_{FA}$ for property FA, etc., and most importantly the critical density $\delta_{\ZI^2}$ for the property that $\ZI^2\inj \G$. The critical densities depends on the deterministic data. 
In \textsection \ref{S - Phase transition} we discuss various analogs of Gromov $\delta=1/2$ phase transition theorem in the density model, while \textsection \ref{sect:density} is devoted to estimating $\delta_{\ZI^2}$ for nonarchimedean lattices (in positive characteristic). Finally \textsection \ref{propertyT} derives other properties of the random group (for example property T) and its ``intermediate rank" behavior.  The reason why property T  arises only ``for sufficiently large residue fields" is the same as in Garland's paper \cite{Garland}, namely that the spectral gap is large enough only for $q$ large enough.

We conclude with a question, the issue of which seems hard to predict at this stage:

\begin{question}\label{Q-rank 2}
Is there $(X,\G)$ of local rank $>r$ such that $\ZI^2\not \inj \G$ for every $r<2$? 
\end{question}

Here $X$ is a CAT(0)  of dimension 2 and $\G\acts X$ freely with $X/\G$ compact and the local rank is defined in \cite{chambers}.  The question can be considered both in the general case, or when the order $q$ is  bounded. See \cite[Question 0.2]{chambers} for a related question (some aspects of the ``local to global problem" implicit in Question \ref{Q-rank 2} are also discussed in \cite{chambers}) and  also \cite[Problem 3]{Haagerup}.

\bigskip

\noindent{\bf Acknowledgements.} We are grateful to the referees for helpful comments on the text.

\tableofcontents

\section{Description of the random group}\label{S - basic idea}

Let $\G$ be a discrete group acting freely simplicially on a 2-complex $X$  with $X/\G$ compact, and  $\G_1, \G_2, \cdots$ be a family of finite index subgroups of $\G$ with $[\G:\G_p]\to \infty$, $p\to\infty$. 
The random group defined below is a  ``randomization" of the following (deterministic) sequence of covering maps:

\[
\xymatrix{
X\ar[dd]_{\G}\ar[rd]_{\G_1} \ar[rrd]_{\G_2}\ar[rrrd]_{\G_3}&&\\ 
& {X/\G_1}\ar[ld]\hspace{.1cm} & X/\G_2\ar[lld] \hspace{.3cm}& X/\G_3\ar[llld]\hspace{.3cm}\cdots\\
 {X/\G}}
\]
associated with $X$ and $(\G_p)_{p\geq 1}$. (The family may be nested $\G\geq \G_1\geq \G_2 \cdots$ and correspond to a tower $X/\G\leftarrow X/\G_1\leftarrow X/\G_2\leftarrow X/\G_3\leftarrow \cdots$ of compact spaces.) The randomization is achieved by inserting  a ``random topological noise" to the spaces $X/\G_p$ (where $p$ is very large) which is detected by the fundamental group. 

\begin{definition} We call $(X,\G,\{\G_p\})$ the \emph{deterministic data}.
\end{definition}

The topological perturbation is implemented as follows. 
 For each $p\geq 1$, remove a family of 2-cells in $X/\G_p$ (equivalently, a family of $\G_p$-orbits of 2-cells in the \emph{fixed} space $X$)  at random (with respect to a probability scheme for removing 2-cells, for example, Bernoulli). The universal cover $\tilde X_p$ of the resulting (random) space $X_p\subset X$ has a (random)  group $\tilde \G_p$ of deck transformations:
\[
\xymatrix{
 &\tilde X_p\ar[d]^{\tilde \G_p}&\\
X/\G_p  & X_p/\G_p\ar[l]}
\]
as described in the following overall diagram:
\[
\xymatrix{
&&& \tilde X_p\ar[llld]\ar[dd]^{\tilde \G_p}&\cdots\\
X\ar[dd]_{\G}\ar[rd]_{\G_1} \ar[rrd]^{\G_p}&&\\ 
& {X/\G_1}\ar[ld] \hspace{.5cm}\cdots&X/\G_p\ar[lld]  & X_p/\G_p\ar[l]\ar[llld]&\cdots\\
 {X/\G}}
\]
We study the properties of $(\tilde X_p,\tilde \G_p)$  when the order of approximation $p$ is very large. The construction provides a random group $\tilde \G_p$, a random space $\tilde X_p$, a random  action  $\tilde \G_p\acts \tilde X_p$,  and a random compact space $\tilde X_p/\tilde \G_p$.

At this stage, the model will be precisely defined as long as the random scheme removing 2-cells is specified; several options are possible. We study here  analogs of M.\ Gromov's well--known models for adding  relators at random to a  finitely generated group (more precisely, analogs of the ``few relator model" and  the ``density model" of  \cite{gromov0, gromov1}).


Fix a deterministic data $(X,\G,\{\G_p\})$ (different choices for the triples $(X,\G,\{\G_p\})$ give rise to different models of random groups and lead to a priori distinct random objects). The set

\begin{center}
$\sC_p:=\{\G_p$-orbits of faces in $X\}$
\end{center}

\noindent (whose element are called \emph{equivariant faces} of $X$ with respect to $\G_p$, or sometimes \emph{equivariant chambers} when we have in mind a Bruhat--Tits building) is finite, and in many interesting cases it is rapidly growing. It
 plays the role of the set 
\begin{center}
$W_p:=$ all words (or reduced words)  of length $p$ (or at most $p$) in $\G$
\end{center}
 
 \noindent in Gromov's models, from which the relations are picked up at random and added to the given group $\G$ (e.g.\ the free group $\F_2$). 
 
For a finite subset $A=\{C_1,\ldots, C_k\}$ of $\sC_p$, we set
\begin{itemize}
\item $X_A:=X- \bigcup_{l=1}^k  \stackrel{\circ}{C_l}$
\item $K_A:=\pi_1(X_A)$ and $\tilde X_A\surj X_A$ be the corresponding universal cover
\item $\G_A$ be the Galois group of the covering map 
\[
\tilde X_A\surj X_A/\G_p.
\]
\end{itemize}

The random group is defined by:

\begin{definition}\label{def-lattice models} Fix for every $p\geq 1$ a process $\IP_p$ for selecting random subsets of elements in $\sC_p$. The \emph{random group of order $p$}  in the $(X,\G,\{\G_p\},\{\IP_p\})$-model is the group $\G_A$ associated by the construction above to the $\IP_p$-generic subset $A\subset \sC_p$.  We say that a property $P$ occurs \emph{with overwhelming probability} in this model if the probability that the random group $\G_A$ of order $p$ satisfies $P$ converges to 1 as $p\to\infty$.      
\end{definition}

\begin{remark}
1) In all cases considered below, the random process $\{\IP_p\}$ is \emph{universal}  in that it does not depend on the deterministic data $(X,\G,\{\G_p\})$. More precisely, a predetermined process $\IP$ is chosen for selecting a random finite subset in an arbitrary finite set, and this process $\IP$ is applied recursively to the terms of the sequence $(\sC_p)_p$.

2) Most of the results of the present paper extend easily to the case of \emph{proper} actions on \emph{cell} complexes with compact quotient. We also note that these models of random group only take into account the ``profinite information" contained in the deterministic data.  
\end{remark}

We  study two special cases of Definition \ref{def-lattice models}, the ``bounded model" and the ``density model". In the first model, a uniformly bounded number of elements of $\sC_p$ is chosen at random:

\begin{definition}[The bounded model]\label{fewmisschambers} Fix an integer parameter $c\geq 1$. The \emph{bounded model} over $(X,\G,\{\G_p\})$  is the $(X,\G,\{\G_p\},\{\IP_p\})$-model associated with the process
\begin{center}
$\IP_p:=$ ``choose $c$ chambers in $\sC_p$, uniformly and independently at random". 
\end{center}
\end{definition}

This corresponds to Gromov's ``few-relator model".

In the second model, the number of chosen chambers in $\sC_p$ is unbounded and rapidly growing.  

\begin{definition}[The density model]\label{fewmisschambers} Fix  a real parameter $\delta> 0$ (the density). The \emph{density model} over $(X,\G,\{\G_p\})$ is the $(X,\G,\{\G_p\},\{\IP_p\})$-model associated with the process
\begin{center}
 $\IP_p:=$ ``choose  $|\sC_p|^\delta$ chambers in $\sC_p$, uniformly and independently at random". 
\end{center}
\end{definition}

This corresponds to the Gromov ``density model".  As in Gromov models, the bounded model can be seen as a manifestation of the ``density model with $\delta=0$" (see also Section \ref{S - critical density}).

\begin{remark} In a sense, the new models can be thought of as  ``mirror images" of the Gromov models: rather than starting with a group with a large supply of quotients, for example non abelian free groups, and gradually adding relations at random, we typically start (see below) with lattices in some algebraic group of rank 2, which have ``as many relations as is conceivable" for an infinite group (in particular they are just infinite up to centre), and remove them at random. It is unclear how to randomize (say, residually finite) discrete groups of higher cohomological dimension;  for example, the above construction provides a precise meaning for the expression  ``$\G$ is a random extension of a lattice in $\PSL_n(K)$" for $n=3$, where $K$ is a local field --- what about $n>3$?
\end{remark}

\section{Preliminary results}\label{S - preliminary results}

Let $(X,\G,\{\G_p\})$ be a deterministic data, and let $p\geq 1$, $A\subset \sC_p$, $X_A$, $\G_A$,  $K_A$ (following the notation of  Section \ref{S - basic idea}) be fixed throughout the section. 

Observe that the sequence of covering spaces $\tilde X_A\surj X_A\surj X_A/\G_p$ provides an exact sequence (non-split in general) 
\[
1\to K_A \to \G_A\to \G_p\to 1
\] 
of discrete groups.

\begin{lemma} \label{L - cohomological dim} If $X$ is contractible, then:
\begin{enumerate}
\item The homology groups $H_i(X_A,\ZI)$ vanish for $i\geq 2$, and the covering space $\tilde X_A$ is contractible. 
\item The group $K_A$ is a free group on countably many generators. 
\item The group $\G_A$ is a finitely presented group of geometric dimension 2.  If $|A|\neq 0$, then  $\G_A$ is a strict extension of $\G$.
\end{enumerate}
\end{lemma}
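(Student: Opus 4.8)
The plan is to exploit the fact that $X_A$ is obtained from the contractible complex $X$ by deleting finitely many $\G_p$-orbits of open $2$-cells, so the difference between $X$ and $X_A$ is controlled by a relative chain complex concentrated in degree $2$. First I would set up the cellular chain complex of the pair $(X, X_A)$: since $X_A$ contains the full $1$-skeleton of $X$ and $X\setminus X_A$ is a disjoint union of (equivariantly, $|A|$ orbits of) open $2$-cells, the relative chains $C_*(X,X_A;\ZI)$ are free abelian concentrated in degree $2$, with basis the deleted cells. The long exact sequence of the pair, together with $H_i(X)=0$ for $i\geq 1$ and $H_0(X)=\ZI$, then forces $H_i(X_A;\ZI)=0$ for all $i\geq 2$ and shows $H_1(X_A;\ZI)$ is free abelian (a subgroup of the free abelian relative $H_2$). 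Since $X_A$ is a connected $2$-complex with free abelian $H_1$ and vanishing $H_i$ for $i\geq 2$, one gets that $\tilde X_A$ is a simply connected $2$-complex with $H_i(\tilde X_A;\ZI)=0$ for $i\geq 2$ (the same relative-pair argument applied upstairs, since $\tilde X_A\to X_A$ restricts to the universal cover of a deletion of $2$-cells from the universal cover $X$ of... — more precisely, $\tilde X_A$ is obtained from a tree-like covering of $X$, which is still contractible, by deleting $2$-cells), hence $\tilde X_A$ is contractible by Hurewicz and Whitehead.

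Second, for part (2): $K_A=\pi_1(X_A)$ is the fundamental group of a connected $2$-complex, but the key point is that $X_A$ deformation retracts (after collapsing a maximal tree and using that we only removed $2$-cells, never added any) onto a space homotopy equivalent to a graph — equivalently, since $\tilde X_A$ is contractible, $X_A$ is a $K(K_A,1)$, and $K_A$ has cohomological dimension equal to $\dim X_A=2$ a priori, but in fact $K_A$ is a subgroup of infinite index... Here I would instead argue directly: $K_A$ is the kernel of $\G_A\surj \G_p$ where $\G_p$ has finite index in $\G$; since $K_A$ acts freely on the contractible complex $\tilde X_A$ of dimension $2$ with noncompact quotient $X_A$ (it is noncompact precisely because we removed cells, so $H_2(X_A)=0$), a free action of infinite... — cleanest: a group acting freely on a contractible $2$-complex with $H_2$ of the quotient vanishing has cohomological dimension $\leq 1$, hence is free by the Stallings–Swan theorem; countability of the generating set follows since $X_A$ has countably many cells.

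Third, for part (3): finite presentability of $\G_A$ follows because $\G_A$ acts freely cocompactly on $\tilde X_A$ — indeed $\tilde X_A/\G_A = X_A/\G_p$ is a finite complex (it is $X/\G_p$ with $|A|$ open $2$-cells removed, and $X/\G_p$ is finite since $X/\G$ is compact and $[\G:\G_p]<\infty$) — and a group acting freely cocompactly on a contractible complex of dimension $2$ is finitely presented of geometric dimension $\leq 2$; the geometric dimension is exactly $2$ (not less) once $|A|\neq 0$ because then $H_2(X_A;\ZI)=0$ but $H_1(X_A;\ZI)\neq 0$ forces $\G_A$ to be non-free, ruling out geometric dimension $1$. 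The strictness of the extension $1\to K_A\to\G_A\to\G\to 1$ — i.e. $K_A\neq 1$ — when $|A|\neq 0$ is precisely the statement that removing at least one orbit of $2$-cells from the (finite, with $X$ contractible hence $X/\G$ aspherical-ish) complex genuinely changes the fundamental group: $H_1(X_A;\ZI)$ acquires a new free summand, or more directly the relative sequence shows $H_1(X_A;\ZI)$ surjects onto a nonzero free abelian group coming from a nontrivial relative cycle, so $\pi_1(X_A)=K_A\neq 1$.

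The main obstacle I anticipate is making the upstairs argument (contractibility of $\tilde X_A$ and the precise structure of the covering $\tilde X_A\to X$-related space) fully rigorous: one must be careful that $\tilde X_A$ is the universal cover of $X_A$, not of $X$, so the clean "delete $2$-cells from a contractible complex" picture requires first passing through the intermediate cover of $X$ corresponding to the normal closure of the boundary loops of the deleted cells, or else simply verifying $\tilde X_A$ is simply connected and $2$-dimensional with trivial $H_2$ via a Mayer–Vietoris / relative-pair computation directly on $\tilde X_A$ using that its image in $X$ is all of $X$ minus the preimages of the removed cells. Everything else is a routine application of Hurewicz–Whitehead, the Stallings–Swan theorem on groups of cohomological dimension one, and standard facts about cocompact free actions.
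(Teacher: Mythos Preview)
Your outline follows the paper's proof closely. For (1) you use the long exact sequence of the pair $(X,X_A)$ where the paper just says the vanishing is ``clear'', and you correctly flag the contractibility of $\tilde X_A$ as the subtle point (the paper asserts it ``classically follows'' without elaboration); your suggested fix---realising $\tilde X_A$ inside the complex obtained by gluing discs to $\tilde X_A$ along \emph{all} lifts of the removed boundary loops, which is then a simply connected cover of the simply connected $X$, hence $X$ itself---is exactly right and more explicit than the paper. For (3) your argument for $K_A\neq 1$ via $H_1(X_A;\ZI)\cong H_2(X,X_A;\ZI)\neq 0$ is a clean variant of the paper's sphere argument.

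There is, however, a genuine gap in your argument for (2), and it is the same gap as in the paper's own proof. You assert that ``a group acting freely on a contractible 2-complex with $H_2$ of the quotient vanishing has cohomological dimension $\leq 1$''; the paper likewise passes from $H^2(X_A;\ZI)=0$ directly to ``$K_A$ has cohomological dimension $\leq 1$'' and invokes Stallings--Swan. But Stallings--Swan needs $H^2(K_A;M)=0$ for \emph{every} coefficient module $M$, and vanishing for $M=\ZI$ does not suffice: $BS(1,2)=\langle a,t\mid tat^{-1}a^{-2}\rangle$ has an aspherical presentation 2-complex with $H_2=0$ and $H_1\cong\ZI$, hence $H^2(BS(1,2);\ZI)=0$, yet $\mathrm{cd}\,BS(1,2)=2$. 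This situation can even be realised under the paper's hypotheses: the presentation 2-complex of $\langle a,t\mid tat^{-1}a^{-2},\ t\rangle$ is contractible, and removing the $t$-cell yields $K_A\cong BS(1,2)$; wedging with a figure-eight and passing to the universal cover gives an example with infinite $\G$. So neither your ``cleanest'' route nor the paper's UCT-then-Stallings--Swan deduction actually establishes that $K_A$ is free.
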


\begin{proof} The first part of the first assertion is clear and the second part classically follows from the first ($\tilde X_A$ is weakly homotopy equivalent to a point, and therefore contractible). 
 The universal coefficient theorem 
\[
0\to \mathrm{Ext}(H_1(X_A,\ZI),\ZI)\to H^2(X_A,\ZI)\to \Hom(H_2(X_A,\ZI))\to 0,
\]
where the module $H_1(X_A,\ZI)$ is free and thus projective, shows that the second integral cohomology  $H^2(X_A,\ZI)$ vanishes.  Thus, $K_A$ is a free group. This follows from the Stallings--Swan theorem that a group of cohomological dimension 1 is free.  
(Note that the group $K_A$ does not act ``naturally" on a tree in general.) Let us prove (3). 
 Since $\G$ acts freely on $X$ contractible, $X/\G$ is an Eilenberg--MacLane space, $K(\G,1)$, and it follows from (1) that $\tilde X_A/\G_A$ is a $K(\G_A,1)$  and in particular $\G_A$ is finitely presented of geometric dimension 2. (If the action $\G\acts X$ is only assumed to be proper, then the groups $\G_A$ are of proper (Bredon) geometric dimension 2.) 
Assume that $|A|\neq 0$. Let $\gamma$ be the boundary of an element of $A$. If $\G_A\to \G$ is an isomorphism, then $\gamma$ is a boundary in $X_A$, and therefore is an equator in a 2-sphere of $X$. This contradicts the fact that $X_A$ is aspherical, so  $K_A\neq 1$, and by equivariance, $K_A$ is infinitely generated. 
\end{proof}

\begin{remark}\label{R - 10}
If $\G$ is of higher cohomological dimension, then $K_A$ is not necessarily free. The density model is interesting to study in this situation, and seems to be working properly only in ``high density regimes" (this to compensate, especially in the case of $\SL_n$, $n\geq 4$, for the very strong ambient rigidity properties of the deterministic data).
\end{remark}

The following ``flat plane correspondence" assertion is  useful in connection with Gromov's periodic flat plane problem.

\begin{lemma}\label{L - lifting lemma}
Assume that $X$ is a CAT(0) space and that $\G\acts X$ is isometric.  For every flat $\Pi\subset X_A$, there exists a flat $\tilde \Pi\subset \tilde X_A$ such that the restriction of $\tilde \pi_A : \tilde X_A\to X_A$ to $\tilde \Pi$ is an isometry onto $\Pi$. Furthermore, if there exists  a subgroup $\Lambda\subset\G$,  $\Lambda\simeq \ZI^2$, such that $\Pi/\Lambda$ is a compact torus, then there is a corresponding  a subgroup $\tilde \Lambda\subset\G_A$,  $\tilde \Lambda\simeq \ZI^2$, such that $\tilde \Pi/\tilde \Lambda$ is a compact torus,  and the following diagram commutes:
\begin{center}
\hskip1cm 
$\xymatrix{
 &\mbox{~}\mbox{~}\tilde \Lambda\subset \G_A\ar[d]&\\
\ZI^2\ar[r]\ar[ur]  & \Lambda\subset \G_p}$
\end{center}
Conversely, if $\tilde \Pi\subset \tilde X_A$ is a flat in $\tilde X_A$, then $\tilde \pi_A(\tilde \Pi)$ is a flat in $X_A$. If in addition  there exists a subgroup $\tilde \Lambda\subset\G_A$,  $\tilde \Lambda\simeq \ZI^2$, such that $\tilde \Pi/\tilde \Lambda$ is a compact torus, then its projection $\Lambda$ in $\G$ is a subgroup isomorphic to $\ZI^2$, such that $\Pi/\Lambda$ is a compact torus.
\end{lemma}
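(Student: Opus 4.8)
The plan is to exploit that $\tilde\pi_A:\tilde X_A\to X_A$ is a covering map, hence a local isometry, and that flats (as well as tori $\Pi/\Lambda$) are nonpositively curved so carry no obstruction to lifting. First I would treat the lifting of flats. A flat $\Pi\subset X_A$ is an isometrically embedded copy of $\IR^2$; since $\IR^2$ is simply connected and $\tilde\pi_A$ is a covering, the inclusion $\Pi\hookrightarrow X_A$ lifts to a map $j:\IR^2\to\tilde X_A$, and because $\tilde\pi_A$ is a local isometry and $\tilde X_A$ is CAT(0) (it is simply connected and locally CAT(0), being the universal cover of $X_A\subset X$ which inherits a locally CAT(0) structure from $X$; alternatively use Lemma \ref{L - cohomological dim}(1) for contractibility and the fact that $\tilde X_A$ is a locally CAT(0) geodesic space), the image $\tilde\Pi:=j(\IR^2)$ is a complete convex flat subspace and $j$ is an isometric embedding onto it; that $\tilde\pi_A|_{\tilde\Pi}$ is an isometry onto $\Pi$ follows since it is a local isometry which is a bijection (injectivity: two points of $\tilde\Pi$ with the same image would give a geodesic loop in $\Pi$, impossible).

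Next I would upgrade this to the equivariant statement. Suppose $\Lambda\subset\G_p$, $\Lambda\simeq\ZI^2$, preserves $\Pi$ with $\Pi/\Lambda$ a compact torus. Here I use the short exact sequence $1\to K_A\to\G_A\to\G_p\to1$ from Section \ref{S - preliminary results}: the action of $\G_A$ on $\tilde X_A$ covers the action of $\G_p$ on $X_A$. For each $\lambda\in\Lambda$ I would choose a lift $\tilde\lambda\in\G_A$; the issue is that lifts are only well-defined up to $K_A$, so a priori they need not form a group isomorphic to $\ZI^2$, only an extension of $\ZI^2$ by a subgroup of $K_A$. The key point is that I may choose the lifts so that $\tilde\lambda$ stabilizes $\tilde\Pi$: indeed $\lambda(\Pi)=\Pi$, so $\tilde\lambda(\tilde\Pi)$ is another flat lifting $\Pi$, and by the uniqueness-up-to-deck-transformation of lifts I can adjust $\tilde\lambda$ by an element of $K_A$ so that $\tilde\lambda(\tilde\Pi)=\tilde\Pi$. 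The stabilizer $\tilde\Lambda$ of $\tilde\Pi$ in $\G_A$ then maps to $\Lambda$ (surjectively by the above, and its image is contained in $\Lambda$ since it must preserve $\Pi=\tilde\pi_A(\tilde\Pi)$), with kernel $\tilde\Lambda\cap K_A$; but an element of $K_A$ fixing $\tilde\Pi$ setwise acts on it as a deck transformation of $\tilde\pi_A|_{\tilde\Pi}$, which is an isometry onto $\Pi$, hence trivially. So $\tilde\Lambda\xrightarrow{\sim}\Lambda\simeq\ZI^2$, it acts on $\tilde\Pi\cong\IR^2$ the same way $\Lambda$ acts on $\Pi$, and $\tilde\Pi/\tilde\Lambda\cong\Pi/\Lambda$ is a compact torus. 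The commutativity of the displayed diagram is then immediate: the maps $\ZI^2\to\Lambda$, $\ZI^2\to\tilde\Lambda$, $\tilde\Lambda\to\Lambda$ are the given isomorphisms and the quotient $\G_A\to\G_p$, and all identify $\ZI^2$ with the common deck group of the flat.

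For the converse, let $\tilde\Pi\subset\tilde X_A$ be a flat. Since $\tilde\pi_A$ is a local isometry, $\tilde\pi_A(\tilde\Pi)$ is locally flat in $X_A$; to see it is a genuine flat I would argue that $\tilde\pi_A|_{\tilde\Pi}$ is injective — a geodesic in $\tilde\Pi$ mapping to a loop in $X_A$ would, after lifting that loop back to $\tilde X_A$, contradict completeness/convexity of $\tilde\Pi$ together with the fact that $\tilde X_A$ is CAT(0) (two distinct geodesic rays in $\tilde\Pi$ from a common basepoint cannot have the same image). Then given $\tilde\Lambda\subset\G_A$, $\tilde\Lambda\simeq\ZI^2$, with $\tilde\Pi/\tilde\Lambda$ a torus, its image $\Lambda$ under $\G_A\to\G_p\hookrightarrow\G$ preserves $\Pi:=\tilde\pi_A(\tilde\Pi)$; the map $\tilde\Lambda\to\Lambda$ is an isomorphism because its kernel lies in $K_A$ and, as above, an element of $K_A$ preserving $\tilde\Pi$ acts trivially on it, and since $\tilde\Lambda$ acts freely cocompactly on $\tilde\Pi$ so does such an element on all of $\tilde X_A$ only if it is $1$ (freeness of the $\G_A$-action). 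Hence $\Lambda\simeq\ZI^2$ and $\Pi/\Lambda\cong\tilde\Pi/\tilde\Lambda$ is a compact torus.

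\medskip\noindent\emph{Main obstacle.} The one genuinely delicate point is the equivariant lift: ensuring that the $K_A$-ambiguity in choosing lifts $\tilde\lambda$ can be resolved \emph{coherently} so that the resulting $\tilde\Lambda$ is a group isomorphic to $\ZI^2$ rather than merely an extension of $\ZI^2$ by a free group. The clean way around it — which I would adopt — is not to lift elements one at a time but to define $\tilde\Lambda$ intrinsically as $\mathrm{Stab}_{\G_A}(\tilde\Pi)$ and then prove (i) it surjects onto $\Lambda$ and (ii) it injects into $\G_p$, both of which follow from the lifting theory of the covering $\tilde\pi_A$ and the freeness of the $\G_A$-action; this sidesteps any cocycle computation. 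Everything else is a routine application of the fact that covering maps of locally CAT(0) spaces are local isometries and that flats are convex and simply connected.
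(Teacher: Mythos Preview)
Your plan is essentially the paper's: lift the flat using that $\IR^2$ is simply connected and $\tilde\pi_A$ is a locally isometric covering, then for each $\lambda\in\Lambda$ pick a lift in $\G_A$ and correct it by an element of $K_A$ so that it stabilises $\tilde\Pi$. The paper does this for two generators $a,b$ of $\Lambda$, checks directly that the resulting $\tilde a,\tilde b$ commute (because $\pi_A|_{\tilde\Pi}$ is a bijection onto $\Pi$, so $\tilde a\tilde b\tilde x=\tilde b\tilde a\tilde x$), and rules out $\langle\tilde a,\tilde b\rangle\simeq\ZI$ by projecting back to $\G_p$. Your stabiliser formulation is cleaner and avoids this last ad~hoc step.

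One imprecision: you assert that the image of $\tilde\Lambda:=\mathrm{Stab}_{\G_A}(\tilde\Pi)$ in $\G_p$ is contained in $\Lambda$ ``since it must preserve $\Pi$''. That only gives containment in $\mathrm{Stab}_{\G_p}(\Pi)$, which the hypotheses do not force to equal $\Lambda$. The fix is trivial: take $\tilde\Lambda$ to be the preimage of $\Lambda$ inside $\mathrm{Stab}_{\G_A}(\tilde\Pi)$; your surjectivity and injectivity arguments then apply verbatim and give $\tilde\Lambda\simeq\Lambda$.

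For the converse, the paper's argument is crisper than your direct injectivity sketch: $\tilde\pi_A|_{\tilde\Pi}$ is a locally isometric covering onto its image, so that image is a flat plane, cylinder, or torus; the last two are excluded because they cannot embed in the CAT(0) space $X$ (they carry noncontractible loops). This immediately gives $K_A\cap\tilde\Lambda=\{e\}$ as you want.
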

\begin{proof}
Since $\pi_A: \tilde X_A\to X_A$ is a locally isometric covering map and $\IR^2$ is contractible, the map $j:\RI^2\to \Pi\subset X_A$ admits a unique isometric lifting  $\tilde j :   \RI^2\to\tilde \Pi\subset \tilde X_A$ through any point $\tilde j(0)=\tilde x\in \tilde X_A$ such that $\pi_A(\tilde x)=j(0)=:x$, such that the following diagram commute:
\[
\xymatrix{
 &\tilde X_A\ar[d]^{\pi_A}&\\
\RI^2\ar[r]_{j}\ar[ur]^{\tilde j}  & X_A}
\]
The map $\tilde \pi_A$, being a local isometry,  restricts to an isometry from $\tilde \Pi$ onto $\Pi$. 
Let $\Lambda\subset \G$ be a subgroup isomorphic to $\ZI^2$ such that $\Pi/\Lambda$ is a compact torus. Let $s\in \Lambda$, and let $g_s\in  \G_A$ be any lift of  $s\in \Lambda$. By equivariance of $\pi_A$, we have $\pi_A(g_s(\tilde x))=sx$. Choose $k_s\in K_A$ such that $k_sg_s\tilde x\in \tilde \Pi$. Then $(k_sg_s)^{-1}(\tilde \Pi)$ is an isometric lifting of $\Pi$ which contains $\tilde x$.  Uniqueness of liftings implies that $k_sg_s(\tilde \Pi)=\tilde \Pi$. Let $\{a,b\}$ be a generating set of $\Lambda$ and let $\tilde a,\tilde b\in \G_A$ be such that $\tilde a(\tilde \Pi)=\tilde \Pi$ and $\tilde b(\tilde \Pi)=\tilde \Pi$. Since $\pi_A(\tilde a\tilde b\tilde x)=ab\pi_A(\tilde x)=ba\pi_A(\tilde x)=\pi_A(\tilde b\tilde a\tilde x)$ and $\pi_A : \tilde \Pi\to \Pi$ is isometric, the elements $\tilde a$ and $\tilde b$ of $\G_A$ commute  and therefore generate an (infinite torsion-free) abelian group $\tilde \Lambda\subset \G_A$. If $\Lambda\simeq \ZI$, let $\tilde c$ be a generator, and let $c\in \G_p$ be its image. Then $c(\Pi)=\Pi$ and $\Lambda\subset \langle c\rangle$,  contradicting $\Lambda\simeq \ZI^2$. Therefore $\tilde \Lambda\acts \tilde\Pi$ is conjugate to a discrete action of $\ZI^2$ on $\IR^2$, hence $\tilde\Pi/\tilde \Lambda$ is compact.   

Conversely, let $\Pi$ be the image of $\tilde \Pi$ under $\pi_A$. Since ${\pi_A}_{|\tilde \Pi}$ is a locally isometric covering map, $\Pi$ is either a torus, a cylinder, or a flat plane.  The first two possibilities are incompatible with the fact that $\Pi\subset X_A\subset X$, where $X$ is a CAT(0) space of dimension 2. In particular, we see that if $\tilde \Lambda =\langle \tilde a,\tilde b\rangle$ is a subgroup isomorphic to $\ZI^2$ acting on $\tilde \Pi$ with $\tilde \Pi/\tilde \Lambda$ compact, then $K_A\cap \tilde \Lambda=\{e\}$.  But this implies that its image $\Lambda$ in $\G$ is isomorphic to $\ZI^2$, acting freely on $\Pi$ with $\Pi/\Lambda$ compact. 
\end{proof}

 \begin{lemma}\label{L - hyperbolic} Assume that $X$ is a CAT(0) space and that $\G$ acts isometrically. Assume that $A\subset B\subset \sC_p$. 
 \begin{enumerate}
\item If $\G_A$ is  Gromov hyperbolic,  then $\G_B$ is Gromov hyperbolic. 
 \item If $\G_B$ contains a subgroup isomorphic to $\ZI^2$,  then $\G_A$ contains a subgroup isomorphic to $\ZI^2$.
 \end{enumerate}
\end{lemma}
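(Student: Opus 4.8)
The plan is to relate $X_A$ and $X_B$ directly. Since $A \subset B$, removing the (open) 2-cells of $B$ removes more than removing those of $A$, so $X_B \subset X_A$, and the inclusion is $\G_p$-equivariant. This gives a commutative diagram of covering maps and an induced homomorphism between fundamental groups and hence between the $\G$-extensions $\G_B \to \G_A$ (these are the Galois groups of $\tilde X_B \surj X_B/\G_p$ and $\tilde X_A \surj X_A/\G_p$ respectively, both sitting in exact sequences over $\G_p$ by the remark preceding Lemma~\ref{L - cohomological dim}). The first task is to argue that $\G_B \to \G_A$ is \emph{injective}. For part (1) I would instead proceed geometrically and quote Lemma~\ref{L - hyperbolic}(1): the point is that hyperbolicity passes from $\G_A$ to $\G_B$ because $X_B$ is obtained from $X_A$ by further removing cells, which (as in Gromov's theory, or via the lifting picture) only ``thins'' the space and cannot create flats. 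So the real content is part (2), the flat-plane statement, where the direction of the implication is the reverse of what a naive inclusion would give.

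For part (2), suppose $\G_B$ contains $\Lambda \simeq \ZI^2$. The natural strategy is: (a) realize this $\ZI^2$ geometrically as acting cocompactly on a flat $\tilde\Pi \subset \tilde X_B$; (b) push down to a flat with cocompact $\ZI^2$-action in $X_B$ or in $X$; (c) since $X_B \subset X_A \subset X$, reinterpret this flat inside $X_A$; (d) lift it back up to $\tilde X_A$ using Lemma~\ref{L - lifting lemma}. Step (a) is the delicate one: an abstract $\ZI^2 \subset \G_B$ need not a priori stabilize a flat, but here $\G_B$ acts on the $\mathrm{CAT}(0)$ space $\tilde X_B$ (which is $\mathrm{CAT}(0)$: it is a locally $\mathrm{CAT}(0)$ 2-complex since $X$ is $\mathrm{CAT}(0)$ and $X_B$ is a locally convex---or at least non-positively-curved---subcomplex, and it is simply connected by construction), and by the Flat Torus Theorem a $\ZI^2$ subgroup of a group acting properly cocompactly by isometries on a $\mathrm{CAT}(0)$ space stabilizes an isometrically embedded flat $\RI^2$ with cocompact quotient. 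This gives $\tilde\Pi \subset \tilde X_B$ with $\tilde\Pi/\Lambda$ a torus. Then by the converse part of Lemma~\ref{L - lifting lemma} applied to the data $(\tilde X_B, \G_B)$, the image $\Pi := \tilde\pi_B(\tilde\Pi)$ is a flat in $X_B$, and the image $\Lambda_0$ of $\Lambda$ in $\G$ is a $\ZI^2$ acting on $\Pi$ with $\Pi/\Lambda_0$ compact.

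Now I use $X_B \subset X_A$: the flat $\Pi$ lies in $X_B$, hence in $X_A$, and $\Lambda_0 \subset \G$ still acts on it cocompactly (the action is on $X$ and $\Pi \subset X$ is fixed). Apply the \emph{forward} direction of Lemma~\ref{L - lifting lemma} to the deterministic data as realized for $A$: the inclusion $\Pi \subset X_A$ together with the $\ZI^2 = \Lambda_0$-action lifting the building-group action yields $\tilde\Pi' \subset \tilde X_A$ isometric to $\Pi$ and a subgroup $\tilde\Lambda \subset \G_A$ with $\tilde\Lambda \simeq \ZI^2$ and $\tilde\Pi'/\tilde\Lambda$ a compact torus. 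In particular $\G_A \supset \ZI^2$, as desired. One subtlety to address is that Lemma~\ref{L - lifting lemma}'s forward direction is stated for $\Lambda \subset \G$ (not $\subset \G_p$) lifting to $\G_p$; here I should make sure $\Lambda_0$ is taken in $\G$ (true: it is the image in $\G$) and that the diagram it produces lands in $\G_A$---which is exactly the content of that lemma.

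The main obstacle I anticipate is step (a): verifying that $\tilde X_B$ (and $\tilde X_A$) is genuinely $\mathrm{CAT}(0)$ so that the Flat Torus Theorem applies, and that the $\G_B$-action is proper and cocompact. Cocompactness is clear since $\tilde X_B/\G_B$ covers the compact $X/\G$ finitely---actually $\tilde X_B/\G_B \cong X_B/\G_p$ which is compact. Properness follows from $\G_B$ acting freely and the covering being locally finite. The $\mathrm{CAT}(0)$ point requires that deleting open 2-cells from the non-positively-curved complex $X$ keeps the link condition, which should hold since removing an open 2-cell only removes edges from vertex links, and a full subgraph of a graph with no short loops still has no short loops; then $\tilde X_B$, being simply connected and locally $\mathrm{CAT}(0)$, is $\mathrm{CAT}(0)$ by Cartan--Hadamard. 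If for the intended applications $X$ is a building (or a square/triangle complex of the relevant type), this is standard. Once that is in place, the rest is assembling Lemma~\ref{L - lifting lemma} twice, in the two directions, with the trivial observation $X_B \subset X_A$ in the middle.
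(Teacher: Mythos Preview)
Your treatment of part (2) is correct and is essentially the paper's argument, only factored differently: you go $\tilde X_B\to X_B\subset X_A\to \tilde X_A$, applying Lemma~\ref{L - lifting lemma} once in each direction, while the paper compresses this into a single application by taking $Y:=\tilde X_A$, $\Delta:=\G_A$ as the new ambient CAT(0) data and letting $C$ be the preimage of $B\setminus A$ in $Y$; then $\tilde Y_C=\tilde X_B$, $\Delta_C=\G_B$, and the converse direction of Lemma~\ref{L - lifting lemma} applied once to $(Y,\Delta,C)$ sends the periodic flat in $\tilde X_B$ directly to a periodic flat in $\tilde X_A$ with stabilizer in $\G_A$. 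Both routes use the Flat Torus Theorem to produce the initial flat in $\tilde X_B$; yours has the virtue of being entirely inside the already-stated lemma, while the paper's trick avoids the intermediate descent to $X$.

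Two points to correct, however. First, your aside about showing ``$\G_B\to\G_A$ is injective'' is misguided: the map induced by $X_B/\G_p\hookrightarrow X_A/\G_p$ on fundamental groups is a \emph{surjection} (no 1-cells are removed), and it is essentially never injective when $B\supsetneq A$, since $K_B$ is strictly larger than $K_A$. Injectivity would make (2) trivial, but it is simply false; the geometric argument is needed precisely because the algebraic map goes the wrong way.

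Second, your handling of part (1) is a genuine gap. You write ``quote Lemma~\ref{L - hyperbolic}(1)'', which is the statement you are proving, and then offer only the intuition that removing cells ``cannot create flats''. The paper's proof makes this precise: if $\G_B$ is not hyperbolic then $\tilde X_B$ is not hyperbolic; being CAT(0), by the Flat Plane Theorem it contains an isometric $\IR^2$; this pushes down to a flat in $X_B\subset X_A$; Lemma~\ref{L - lifting lemma} (forward direction, first sentence) lifts it to a flat in $\tilde X_A$; hence $\tilde X_A$ and $\G_A$ are not hyperbolic. This is exactly the ``lifting picture'' you allude to, but it needs to be written out --- in particular the invocation of the Flat Plane Theorem is the substantive step you omitted.
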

 
 \begin{proof}
(1) By invariance under quasiisometry,  it is enough to prove that $\tilde X_B$ is hyperbolic. If it is not hyperbolic then, being a CAT(0) space,  it contains a flat plane $\Pi\simeq \RI^2\inj \tilde X_B$ by the flat plane theorem. Since the  image of $\Pi$ in $X_B$ under the covering map $\pi_B\colon \tilde X_B\surj X_B$ is a flat plane, the space  $X_A$ contains a flat plane, and therefore, $\tilde X_A$ contains a flat plane by Lemma \ref{L - lifting lemma}. Therefore, $\tilde X_A$ is not hyperbolic and $\G_A$ is not a hyperbolic group, contrary to assumption. Thus $\G_B$ is hyperbolic.
 
 (2) Since $A\subset B$, the map $\tilde X_B\surj X_B\inj X_A$ lifts to a map  
 \[
 \tilde X_B\to \tilde X_A\surj X_A
 \]  
 Let $Y=\tilde X_A$, $\Delta=\G_A$, and let $C$ be the pull-back of $B\setminus A\subset X$ in $Y$. Let also $\tilde \Lambda\subset \G_B$ be a subgroup isomorphic to $\ZI^2$, and let $\tilde \Pi\subset \tilde X_B$ be the corresponding periodic flat.   We apply Lemma \ref{L - lifting lemma} to the isometric action $\Delta\acts Y$, and the subset $C$ of $Y$. Then $Y_C$ coincide with the image of $\tilde X_B$ in $Y$ and $\tilde Y_C=\tilde X_B$, $\Delta_C=\G_B$. It follows that $\Pi=\tilde \pi_C(\tilde \Pi)$ is a flat in $Y_C\subset Y=\tilde X_A$ and that the projection $\Lambda$ of $\tilde \Lambda$ in $\Delta$ is a subgroup isomorphic to $\ZI^2$, such that $\Pi/\Lambda$ is compact. This shows that $\G_A$ contains a copy of $\ZI^2$.
 \end{proof}
 
 The same is true for the isolated flats property:

\begin{lemma}\label{L - isol flats} Assume that $X$ is a CAT(0) space and that $\G$ acts isometrically. If $X$ has the isolated flats property,  then $\tilde X_A$ has the isolated flats property. 
 \end{lemma}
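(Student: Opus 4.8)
The plan is to reduce the isolated flats property for $\tilde X_A$ to the one for $X$ via the covering maps and the flat-lifting machinery of Lemma~\ref{L - lifting lemma}. Recall that a CAT(0) space $Z$ with a geometric group action has \emph{isolated flats} if there is a collection $\mathcal{F}$ of flats such that every flat in $Z$ is contained in a uniformly bounded neighborhood of some member of $\mathcal{F}$, and the members of $\mathcal{F}$ are ``isolated'' in the sense that for each $D$ there is $\rho(D)$ so that any two distinct members have $D$-neighborhoods meeting in a set of diameter at most $\rho(D)$. First I would fix the collection $\mathcal{F}_X$ of flats in $X$ witnessing isolated flats for $X$, and push it down to $X/\G$, then pull back to obtain a $\G_A$-invariant collection $\tilde{\mathcal{F}}$ of flats in $\tilde X_A$: concretely, a flat in $\tilde{\mathcal{F}}$ is a lift through $\tilde\pi_A$ of (the image in $X_A$ of) a flat in $\mathcal{F}_X$, using that $X_A \subset X$ isometrically and that $\tilde\pi_A$ is a local isometry.

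The key steps, in order: (i) \textbf{Every flat lies near $\tilde{\mathcal{F}}$.} Let $\tilde\Pi \subset \tilde X_A$ be a flat. By the converse direction of Lemma~\ref{L - lifting lemma}, $\Pi := \tilde\pi_A(\tilde\Pi)$ is a flat in $X_A \subset X$. By isolated flats in $X$, $\Pi$ lies in the $R$-neighborhood of some $F \in \mathcal{F}_X$ (with $R$ independent of $\Pi$). Since $\tilde\pi_A$ is a covering which is a local isometry, and $\tilde\Pi$ maps isometrically onto $\Pi$ (again Lemma~\ref{L - lifting lemma}), one lifts $F$ (or the relevant piece of it) to a flat $\tilde F \in \tilde{\mathcal{F}}$ through a basepoint matching $\tilde\Pi$; because covering maps are $1$-Lipschitz in both directions on sufficiently small scales and distances in CAT(0) spaces between flats are realized by geodesics, the bound $\tilde\Pi \subset \mathcal{N}_R(\tilde F)$ is inherited. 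Here one must be slightly careful: the image $F \cap X_A$ might be a proper subset of $F$, so I would instead argue that the flat $\Pi$ itself lifts, together with a geodesic from $\Pi$ to $F$, forcing the lift of $F$ to be an honest flat in $\tilde X_A$ within bounded distance. (ii) \textbf{Isolation of members of $\tilde{\mathcal{F}}$.} Suppose $\tilde F_1 \neq \tilde F_2$ in $\tilde{\mathcal{F}}$ have $D$-neighborhoods intersecting in a large-diameter set. Projecting by $\tilde\pi_A$ and then including into $X$, we get flats $F_1, F_2 \in \mathcal{F}_X$ (or translates thereof) whose $D$-neighborhoods intersect in a set of diameter $\geq \rho(D) + 1$; isolation in $X$ forces $F_1 = F_2$ as flats in $X$. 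Then a short covering-space argument — two lifts of the same flat either coincide or are disjoint in their $D$-neighborhoods for suitable $D$, since the covering $\tilde\pi_A$ restricted to a flat is trivial (flats are simply connected and map isometrically, so each is a genuine copy) — yields $\tilde F_1 = \tilde F_2$, a contradiction.

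The main obstacle I anticipate is step (i), specifically handling the fact that removing open $2$-cells means $X_A$ is not convex in $X$, so a flat $F \in \mathcal{F}_X$ need not be contained in $X_A$, and the geodesic realizing the distance from $\Pi$ to $F$ in $X$ need not lie in $X_A$; thus one cannot directly lift $F$ to $\tilde X_A$. The remedy is that, for the isolated flats conclusion, one only needs \emph{some} flat of $\tilde X_A$ near $\tilde\Pi$, and $\tilde\Pi$ itself maps to a flat $\Pi$ in $X$; so I would instead take the witnessing collection $\tilde{\mathcal F}$ to be generated by \emph{all} lifts of all flats of $X_A$ (equivalently, by Lemma~\ref{L - lifting lemma}, these are in flat-preserving bijection with the flats of $X$ through the given basepoints), and then transport the isolated-flats constants of $X$ directly: any configuration of two nearby flats or of a flat far from all others in $\tilde X_A$ projects, under the local isometry $\tilde\pi_A$ followed by inclusion $X_A \hookrightarrow X$, to such a configuration in $X$, which is excluded. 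Since $\tilde\pi_A \colon \tilde X_A \to X_A$ is a local isometry and $X_A \subset X$ isometrically, neighborhoods and diameters can only shrink under this composition, so the quantitative bounds for $X$ apply verbatim, giving the isolated flats property for $\tilde X_A$.
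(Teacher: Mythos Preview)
The paper states this lemma without proof, so there is nothing to compare against; let me assess your sketch directly.

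Your strategy is the natural one, but two steps are not justified. First, the inclusion $X_A\hookrightarrow X$ is \emph{not} isometric: removing open $2$-cells forces some $X$-geodesics to detour, so the inclusion is only $1$-Lipschitz for the intrinsic path metrics. Consequently your last sentence fails --- a large-diameter configuration upstairs maps to a configuration of \emph{at most} that diameter downstairs, which is the wrong inequality for deriving a contradiction. This can be repaired by first projecting to the flats themselves, where $\tilde\pi_A$ \emph{is} an isometry by Lemma~\ref{L - lifting lemma}: if $\tilde z,\tilde w\in N_D(\tilde F_1)\cap N_D(\tilde F_2)$ are far apart, their CAT(0) projections to $\tilde F_1$ are far apart in $\tilde F_1$, hence their images in $F_1\subset X$ are equally far apart and lie in $N_{2D}^X(F_2)$. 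This handles the case where the images $F_1,F_2$ lie near distinct members of the isolating family in $X$.

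The second gap is more serious. When $\tilde F_1\neq\tilde F_2$ are two lifts of the \emph{same} flat $F\subset X_A$, your assertion that they ``either coincide or are disjoint in their $D$-neighbourhoods'' is unsupported and in fact false in general: $K_A$ contains elements of small translation length (e.g.\ boundary loops of removed triangles), so distinct lifts can certainly come $D$-close. What must be shown is that $N_D(\tilde F_1)\cap N_D(\tilde F_2)$ has \emph{uniformly} bounded diameter. Writing $\tilde F_2=k\tilde F_1$ for the unique nontrivial $k\in K_A$, the repaired argument above yields $d(x,kx)\le 4D$ on a convex subset $S\subset\tilde F_1$ of comparable diameter; one then has to argue --- using that no nontrivial element of $K_A$ stabilises a flat (else its $\pi_A$-image would be a flat cylinder in the CAT(0) space $X$), convexity of the displacement function, and $2$-dimensionality of $\tilde X_A$ --- that $S$ cannot contain a ray, hence is bounded, and finally extract uniformity over all such $k$ and all flats from cocompactness of $\G_A\acts\tilde X_A$. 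None of this is present in your sketch, and it is where the actual content of the lemma lies.
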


 \section{Random periodic flat plane problems I}\label{S - pfp I}

 We consider the bounded model first.

Recall that a countable group $\G$ is called  \emph{virtually indicable} if some finite index subgroup of $\G$ admits an infinite abelian quotient.
 
The main theorem in this section is a result that is significantly more general than Theorem \ref{intrdensity} but is  restricted to the bounded model. It produces groups which are \emph{infinitesimal perturbations} of the deterministic data.

\begin{lemma}\label{L -notvirtind} Assume that the deterministic data $(X,\G,\{\G_p\})$ satisfies:
\begin{itemize}
\item $X$ is a CAT(0) space and $\G$ acts isometrically
\item $\G_p\triangleleft \G$, and $[\G:\G_p] \to_p \infty$
\item $\G$ is not virtually indicable
\item $\ZI^2\inj \G$
\end{itemize} 
then the random group in the bounded $(X,\G,\{\G_p\})$-model   contains a copy of $\ZI^2$ with overwhelming probability.
\end{lemma}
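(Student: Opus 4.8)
The plan is to show that for a uniformly bounded random subset $A\subset\sC_p$, the random group $\G_A$ typically already contains a copy of $\ZI^2$, and the mechanism is that the periodic flats of $X$ survive the removal of $|A|=c$ equivariant chambers when $p$ is large. First I would fix a subgroup $\Lambda\simeq\ZI^2$ in $\G$ with $\Pi/\Lambda$ a compact torus for some flat $\Pi\subset X$ (this exists by hypothesis $\ZI^2\inj\G$; a priori $\Lambda$ need not act cocompactly on a single flat, but since $\G$ acts freely cocompactly on the CAT(0) $2$-complex $X$, the flat torus theorem gives a $\Lambda$-periodic flat). By Lemma \ref{L - lifting lemma} it suffices to produce, with overwhelming probability, a flat $\Pi'\subset X_A=X\setminus\bigcup_{l}\mathring C_l$ together with a rank-$2$ subgroup of $\G_p$ acting cocompactly on it: indeed Lemma \ref{L - lifting lemma} then lifts this to $\tilde\Lambda\simeq\ZI^2\subset\G_A$.

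The heart of the argument is a counting/probabilistic estimate showing that one such $\Lambda$-periodic flat (or a $\G_p$-translate of it) avoids all $c$ removed equivariant chambers. Here is where the hypothesis ``$\G$ is not virtually indicable'' enters, and this is the step I expect to be the main obstacle. The point is the following: $\Gamma_p\triangleleft\Gamma$ with $[\Gamma:\Gamma_p]\to\infty$. If $\Lambda\cap\Gamma_p$ still acted cocompactly on $\Pi$, then $\Lambda\cap\Gamma_p$ would be a finite-index subgroup of $\Lambda\cong\ZI^2$, hence $\cong\ZI^2$, giving an infinite abelian quotient route only if $\Lambda\cap\Gamma_p$ is a quotient, which it is not — so one must argue differently. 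What one actually needs is: the image of $\Lambda$ in $\Gamma/\Gamma_p$ must be ``large'' infinitely often, OR the translates of $\Pi$ under $\Gamma_p$ are numerous. Concretely, consider the compact quotient $\Pi/\Lambda$ inside $X/\Lambda$; its image in $X/\Gamma_p$ is covered by $[\Lambda:\Lambda\cap\Gamma_p]$-to-one, and the number of equivariant chambers (i.e.\ $\Gamma_p$-orbits) meeting $\Pi$ is at most $N:=$ number of $\Lambda$-orbits of faces in $\Pi$ times $[\Lambda:\Lambda\cap\Gamma_p]^{-1}\cdot(\text{something})$ — more carefully, the set of elements of $\sC_p$ whose orbit meets $\Pi$ has cardinality bounded by the number of faces of $\Pi/(\Lambda\cap\Gamma_p)$, which is $[\Lambda:\Lambda\cap\Gamma_p]\cdot(\text{faces of }\Pi/\Lambda)$. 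The key claim is that $[\Lambda:\Lambda\cap\Gamma_p]$ stays bounded (this is precisely a consequence of non-virtual-indicability: if it went to infinity one would extract an infinite abelian quotient of a finite-index subgroup), so the ``bad set'' $\sC_p^{\mathrm{bad}}\subset\sC_p$ of equivariant chambers that meet a $\Lambda$-periodic flat, intersected with a \emph{fixed} such flat's orbit, has bounded size $N_0$ independent of $p$.

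With that in hand, the probabilistic estimate is routine: the $\IP_p$-process chooses $c$ chambers uniformly and independently from $\sC_p$, and $|\sC_p|\to\infty$ because $[\Gamma:\Gamma_p]\to\infty$ (the number of $\Gamma_p$-orbits of faces grows proportionally to the index). Hence the probability that at least one of the $c$ chosen chambers lies in the fixed bad set of size $\le N_0$ is at most $c\,N_0/|\sC_p|\to 0$. On the complementary event, no chamber of the chosen flat's $\Gamma_p$-orbit is removed, so that flat lies entirely in $X_A$, and we may take $\Pi':=\Pi$ with the cocompact action of $\Lambda\cap\Gamma_p\subset\Gamma_p$ (which is still $\cong\ZI^2$ since it has finite index in $\Lambda$). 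Applying Lemma \ref{L - lifting lemma} produces $\tilde\Lambda\cong\ZI^2$ inside $\G_A$, completing the proof. I would organize the write-up as: (1) set up the flat and its periodicity; (2) prove the boundedness of $[\Lambda:\Lambda\cap\Gamma_p]$ via non-virtual-indicability — the delicate step; (3) the elementary union bound; (4) lift via Lemma \ref{L - lifting lemma}.
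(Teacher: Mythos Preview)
Your overall architecture is right: fix a periodic flat $\Pi$ for $\Lambda\simeq\ZI^2$, show that with high probability none of the $c$ randomly chosen equivariant chambers meets $\Pi$, and then lift via Lemma~\ref{L - lifting lemma}. The gap is in your ``key claim'' that $[\Lambda:\Lambda\cap\G_p]$ stays bounded as a consequence of non-virtual-indicability. This is false. Take for instance any residually finite $\G$ with property~T containing $\ZI^2$ (e.g.\ the Cartwright--Steger lattice) and any chain $\G_p\triangleleft\G$ with $\bigcap_p\G_p=\{e\}$. If $[\Lambda:\Lambda\cap\G_p]\le M$ for all $p$, then the finitely many subgroups of $\Lambda\simeq\ZI^2$ of index $\le M$ have a common nontrivial subgroup $\Lambda_0$, and $\Lambda_0\subset\bigcap_p\G_p=\{e\}$, a contradiction. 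So the ``bad set'' $F_p'\subset\sC_p$ of equivariant chambers meeting $\Pi$ is \emph{not} of bounded size; its cardinality is governed by $[\Lambda:\Lambda\cap\G_p]$, which typically tends to infinity. Your union bound $cN_0/|\sC_p|$ therefore collapses.

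What actually has to be shown is the weaker statement $|F_p'|/|\sC_p|\to 0$, and this is where non-virtual-indicability genuinely enters. The paper argues by contradiction: if the ratio is bounded below along a subsequence, then $[\G/\G_{p_i}:\Lambda\G_{p_i}/\G_{p_i}]$ is bounded, i.e.\ $[\G:\Lambda\G_{p_i}]\le\kappa$. Since $\G$ is finitely generated, the groups $\Lambda\G_{p_i}$ range over a finite set of finite-index subgroups, each with finite abelianization (this is where ``not virtually indicable'' is used), so there is a uniform bound $\gamma$ on $|\Lambda\G_{p_i}/[\Lambda\G_{p_i},\Lambda\G_{p_i}]|$. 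But $\Lambda\G_{p_i}/\G_{p_i}\simeq\Lambda/\Lambda_{p_i}$ is abelian, hence $[\Lambda\G_{p_i},\Lambda\G_{p_i}]\subset\G_{p_i}$, and one deduces $|\Lambda/\Lambda_{p_i}|\le\gamma$; yet $|\Lambda/\Lambda_{p_i}|\to\infty$ because $[\G:\G_{p_i}]\to\infty$ while $[\G:\Lambda\G_{p_i}]$ is bounded. That is the contradiction. In short: non-virtual-indicability does not bound the image of $\Lambda$ in $\G/\G_p$; it bounds it \emph{only under the extra hypothesis} that $\Lambda\G_p$ has bounded index in $\G$, and the proof works by showing those two constraints are incompatible.
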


\begin{proof} 
Let $\Lambda$ be a subgroup of $\G$ isomorphic to $\ZI^2$.
 Let $\Pi\inj X$ be a flat plane on which $\Lambda$ acts freely with $X/\Lambda$ a compact torus (which exists by the flat torus theorem).  Consider the subgroup 
 \[
 \Lambda_p=\G_p\cap \Lambda
 \] 
 and let $F_p$ be the set of $\Lambda_{p}$-orbits of chambers in $\Pi$.  Since 
 \[
 [\Lambda:\Lambda_p]\leq [\G:\G_p]
 \]
 the set $F_p$ is finite. We denote by $F_p'\subset \sC_p$  the image of $F_p$ into $\sC_p$ under the map which associates to a $\Lambda_p$-orbit in $\Pi$  its corresponding $\G_p$-orbit in $X$.
  
Let $(\G_{A_p},\tilde X_{A_p})$ be the random group associated with a random subset $A_p\subset \sC_p$ in the bounded model (so $|A_p|=c\geq 1$ is a fixed integer). We will show that  
\begin{center}
$\IP(A_p\cap \Pi\neq\emptyset)\xrightarrow[p\to\infty]{} 0$. 
\end{center}
Assume towards a constradiction that the limit is not zero. Then there exist $\delta>0$ and a sequence $p_1<p_2<p_3<\ldots$ of integers such that, for all $i\geq 1$, the random set $A_{p_i}$ in  $\sC_{p_i}$ contains, with probability at least $\delta>0$, an equivariant chamber $C$ such that $C\cap \Pi\neq \emptyset$:
\[
\IP(\exists C\in A_{p_i}\mid C\cap \Pi\neq \emptyset)  \geq \delta.
\]
 A fortiori,
\[
\sum_{C\in A_{p_i}} \IP(C\cap \Pi\neq \emptyset)  \geq \delta.
\]
Thus, for all $i\geq 1$, 
\[
|\{C\in \sC_{p_i}| C\cap \Pi\neq \emptyset\}| \geq {\delta\over c} |\sC_{p_i}|.
\]
Since the condition $C\cap \Pi\neq \emptyset$ is equivalent to the fact that $C\in F_{p_i}'$,  we obtain
\[
|F_{p_i}|\geq|F_{p_i}'|\geq |\{C\in \sC_{p_i}| C\cap \Pi\neq \emptyset\}| \geq {\delta\over c} |\sC_{p_i}|.
\]
Consider the groups $G_p=\G/\G_p$ and $H_p=\Lambda/\Lambda_p$, which act freely on $\sC_p$ and $F_p$ respectively. 

It follows that there exists a constant $\kappa$, which depends only on $\delta$, $c$, the number of $\G$-orbit of chambers in $X$ and the number of $\Lambda$-orbits of chambers in $\Pi$, such that 
\[
[G_{p_i}:H_{p_i}]\leq \kappa.
\]
Hence, for every $i$, the subgroup $\Lambda\G_{p_i}$ of $\G$ is  (by the isomorphism theorems) 
of index
\[
[\G:\Lambda\G_{p_i}]\leq \kappa.
\]
As $\G$ is finitely generated (for it acts freely on $X$ with $X/\G$ compact), the family $\cF$ of subgroups of index at most $\kappa$ is finite.  

A finitely generated group $\G$ is not virtually indicable  if and only if every finite index subgroup $\G_0$ of $\G$ has finite  abelianization $\G_0/[\G_0,\G_0]$. Since  $\Lambda\G_{p_i}\in \cF$ we have  
\[
|\Lambda\G_{p_i}/[\Lambda\G_{p_i},\Lambda\G_{p_i}]|<\gamma
\] 
for some fixed $\gamma>0$, not depending on $i$.   On the other hand, since $\Lambda\simeq \ZI^2$,
\[
H_{p_i}\subset \Lambda\G_{p_i}/[\Lambda\G_{p_i},\Lambda\G_{p_i}]
\]
and as
\[
|H_{p_i}|\xrightarrow[|G_{p_i}|\to \infty]{} \infty
\]
we have
\[
|\Lambda\G_{p_i}/[\Lambda\G_{p_i},\Lambda\G_{p_i}]|\xrightarrow[p_i\to \infty]{} \infty.
\] 
which gives the desired contradiction. 
Thus, 
\[
\IP(A_p\cap \Pi\neq\emptyset)\xrightarrow[p\to\infty]{} 0.
\] 
We now apply Lemma \ref{L - lifting lemma}, whose assumptions are satisfied with overwhelming probability. This shows that $\ZI^2\inj \G_{A_p}$ with overwhelming probability.
\end{proof}

Theorem \ref{T- stable random} below can be expressed by saying that ``the periodic flat plane problem is stable under randomization in the bounded model", under suitable assumptions. More precisely:

 \begin{definition} We say that a property $P$ is \emph{stable under randomization} in the $(X,\G,\{\G_p\},\{\IP_p\})$-model if the random group has property $P$ with overwhelming probability provided that the deterministic data $(X,\G,\{\G_p\})$ has property $P$.
\end{definition}

 \begin{theorem}\label{T- stable random} Let $(X,\G,\{\G_p\})$ be a deterministic data, where $\G$ acts isometrically on $X$ CAT(0), and $\G_p\triangleleft \G$ is a sequence of normal subgroups such that $[\G:\G_p]\to \infty$. If $\G$ is not virtually indicable, then the periodic flat plane alternative is stable under randomization in the bounded  $(X,\G,\{\G_p\})$-model. 
 \end{theorem}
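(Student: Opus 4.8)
The plan is to reduce the statement to the conjunction of the two halves of the ``periodic flat plane alternative'', namely the implication ``$\IR^2\inj X \impl \ZI^2\inj \G_{A_p}$'' together with the reverse implication ``$\ZI^2\inj \G_{A_p} \impl \IR^2\inj X_{A_p}$'' (equivalently, $\tilde X_{A_p}$ not hyperbolic). The deterministic hypothesis is that $(X,\G)$ satisfies this alternative, so I would split into two cases according to whether $X$ is hyperbolic or not.

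First, suppose $X$ is hyperbolic (i.e.\ $\IR^2\not\inj X$). Then $X_{A_p}\subset X$ is a CAT(0) space containing no flat plane, so $X_{A_p}$ is hyperbolic, and by Lemma \ref{L - lifting lemma} (or rather its converse direction, which shows a flat in $\tilde X_{A_p}$ projects to a flat in $X_{A_p}$) the universal cover $\tilde X_{A_p}$ contains no flat plane either; being CAT(0) it is hyperbolic by the flat plane theorem, hence $\G_{A_p}$ is a hyperbolic group. In particular $\G_{A_p}$ satisfies the alternative trivially, and this holds with probability $1$ (no randomness needed), a fortiori with overwhelming probability.

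Second, suppose $X$ is not hyperbolic. Since $(X,\G)$ satisfies the periodic flat plane alternative, we have $\ZI^2\inj \G$. Now all four hypotheses of Lemma \ref{L -notvirtind} are in force: $X$ is CAT(0) with $\G$ acting isometrically, $\G_p\triangleleft\G$ with $[\G:\G_p]\to\infty$, $\G$ is not virtually indicable, and $\ZI^2\inj\G$. Hence by Lemma \ref{L -notvirtind} the random group $\G_{A_p}$ in the bounded model contains a copy of $\ZI^2$ with overwhelming probability. Since $\tilde X_{A_p}$ has dimension $2$ and contains a periodic flat by Lemma \ref{L - lifting lemma}, $X_{A_p}$ contains a flat plane, so the ``$\ZI^2\inj\G_{A_p}\impl\IR^2\inj X_{A_p}$'' direction also holds; thus the full alternative holds for $\G_{A_p}$ with overwhelming probability. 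Combining the two cases gives the theorem.

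I do not expect a genuine obstacle here: the theorem is essentially a repackaging of Lemma \ref{L -notvirtind} together with the hyperbolicity-is-inherited observations from Lemma \ref{L - hyperbolic}(1) and Lemma \ref{L - lifting lemma}. The only point requiring a little care is making sure the notion of ``periodic flat plane alternative'' is parsed so that \emph{both} implications are covered, and that in the hyperbolic case the conclusion is vacuously (deterministically) true; once that is pinned down the proof is a two-line case split invoking the already-established lemmas. One should also note that the bounded model hypothesis is used only through Lemma \ref{L -notvirtind}, which is exactly where the finitely-many-subgroups-of-bounded-index argument lives, so no new estimate is needed.
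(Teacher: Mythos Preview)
Your proof is correct and follows essentially the same route as the paper: a two-case split according to whether $X$ (equivalently, $\G$) is hyperbolic, invoking Lemma~\ref{L -notvirtind} in the non-hyperbolic case and the hyperbolicity-inheritance of Lemma~\ref{L - hyperbolic}(1) (which you reprove inline) in the hyperbolic case. The extra sentence verifying ``$\ZI^2\inj\G_{A_p}\impl\IR^2\inj X_{A_p}$'' is unnecessary, since that direction of the alternative is automatic from the flat torus theorem, but it does no harm.
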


\begin{proof} Assume that $\G$ satisfies the periodic flat plane alternative and let us prove that the random group does.  
If $\G$ contains $\ZI^2$, then Lemma \ref{L -notvirtind} shows more: every copy of $\ZI^2$ eventually survives in the random group. If $\G$ does not contain $\ZI^2$ then it is hyperbolic, and Lemma  \ref{L - hyperbolic} shows that the random group is hyperbolic as well. 
\end{proof}

According to the ``flat closing conjecture" ($=$ the hypothesis that the periodic flat plane problem always has a positive answer), one expects that virtual indicability doesn't play a role regarding the existence of a periodic flat plane in the bounded model. The conjecture implies that for an arbitrary deterministic data $(X,\G,\{\G_p\})$, where $X$ is CAT(0) and $\G$ acts isometrically, the periodic flat plane alternative is stable under randomization in the bounded $(X,\G,\{\G_p\})$-model. This can be checked easily in a few situations, including spaces with isolated flats, using Lemma \ref{L - isol flats}.


Let us conclude this section with a statement which (in the case of the bounded model) weakens the assumption on the deterministic data for  the $\ZI^2\inj \G$ assertion in Theorem \ref{intrdensity}.
 
  Let $k$ be a non-archimedean local field with discrete valuation, and $\mathbf{G}$ be an algebraic group of rank 2 over $k$.
 Let  $X$ be the associated Bruhat-Tits building,  $\G$ be a uniform lattice in $\mathbf{G}$ acting freely on $X$, and $\{\G_p\}_p$ be a sequence of finite index normal subgroups, and let us take  $(X,\G,\{\G_p\})$ as deterministic data.

\begin{corollary}\label{C- random building with chambers missing bounded model}
If $\G$ is the random group in the bounded $(X,\G,\{\G_p\})$-model, then $\ZI^2\inj \G$ with overwhelming probability.
\end{corollary}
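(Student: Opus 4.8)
The plan is to check that the deterministic data $(X,\G,\{\G_p\})$ coming from a uniform lattice in a rank-$2$ non-archimedean group satisfies the four hypotheses of Lemma \ref{L -notvirtind}, and then to invoke that lemma directly. Three of the hypotheses are immediate from the setup: the Bruhat--Tits building $X$ of $\mathbf{G}$ over $k$ is a complete CAT(0) space; the uniform lattice $\G<\mathbf{G}(k)$ acts on it isometrically (freely, by assumption) with compact quotient; and the $\G_p$ are normal of finite index with $[\G:\G_p]\to\infty$ (the latter being part of the very definition of a deterministic data). It therefore remains to verify that $\G$ is not virtually indicable and that $\ZI^2\inj\G$.

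For non-virtual-indicability I would invoke Kazhdan's property (T): an almost $k$-simple algebraic group of $k$-rank $\geq 2$ over a local field has property (T), hence so does the lattice $\G$, and hence so does every finite-index subgroup $\G_0\leq\G$; since a group with property (T) has finite abelianization, $\G_0/[\G_0,\G_0]$ is finite for every finite-index $\G_0\leq\G$, which is exactly the statement that $\G$ is not virtually indicable. (One could equally argue via Margulis' normal subgroup theorem applied to $\G_0$ and the subgroup $[\G_0,\G_0]$.) This is the one place where one genuinely uses that $\mathbf{G}$ is almost $k$-simple of rank $2$, not merely that its building is $2$-dimensional.

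For $\ZI^2\inj\G$ I would argue from the structure of the building combined with cocompactness. A maximal $k$-split torus $\mathbf{S}\subset\mathbf{G}$ has rank $2$, and evaluating cocharacters at a uniformizer of $k$ embeds the cocharacter lattice $\ZI^2$ of $\mathbf{S}$ into $\mathbf{S}(k)\subset\mathbf{G}(k)$ as a subgroup $D$ acting by translations, with compact quotient, on the apartment $\mathcal{A}$ attached to $\mathbf{S}$; thus $\mathcal{A}/D$ is a flat $2$-torus. Since $\G$ is cocompact, i.e. $\mathbf{G}(k)=\G C$ for some compact $C$ and $\G$ acts properly cocompactly on $X$ (which is $2$-dimensional and non-hyperbolic, as it contains the flat $\mathcal{A}$), $\G$ contains a subgroup isomorphic to $\ZI^2$; this is the standard fact that a uniform lattice in a semisimple group of $k$-rank $n$ over a local field contains $\ZI^n$, and a self-contained proof for $n=2$ can be given by a combinatorial pigeonhole argument producing, along two transverse directions of an apartment, commuting hyperbolic elements of $\G$ sharing a periodic flat.

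Granting these two points, Lemma \ref{L -notvirtind} applies verbatim and yields that the random group in the bounded $(X,\G,\{\G_p\})$-model contains a copy of $\ZI^2$ with overwhelming probability, which is the assertion. (Equivalently, one may route the argument through Theorem \ref{T- stable random}: $\G$ is not virtually indicable and satisfies the periodic flat plane alternative --- in fact $\ZI^2\inj\G$ --- hence the alternative is stable under randomization in the bounded model, and since the random space remains $2$-dimensional and non-hyperbolic this forces $\ZI^2$ into the random group.) The step I expect to require the most care is the verification that $\ZI^2\inj\G$: although classical, a complete argument must combine the existence of a top-dimensional apartment in $X$ with the cocompactness of $\G$, and one must be attentive to the precise meaning of ``rank $2$'' for $\mathbf{G}$ that is used both here and in the property (T) step.
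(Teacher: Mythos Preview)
Your proof is correct and follows essentially the same route as the paper: use that rank-$2$ algebraic groups over local fields have Kazhdan's property (T), pass this to the lattice $\G$ to conclude that $\G$ is not virtually indicable, and then invoke Lemma~\ref{L -notvirtind}. The paper's proof is in fact a two-line version of yours --- it cites property (T) for $\mathbf G$ and $\G$, notes this implies non-virtual-indicability, and says ``therefore Lemma~\ref{L -notvirtind} applies'' --- leaving the hypothesis $\ZI^2\inj\G$ implicit as a standard fact about uniform lattices in rank-$2$ buildings, which you took the extra care to justify.
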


Note that such a determinitic data can be constructed. Indeed, let $\G'$ be any uniform lattice in $\mathbf{G}$ (see \cite[Chap. IX.3]{Margulis}; if the characteristic of $k$ is zero, then all lattices are all uniform). Then, by a well-known result of Selberg,   $\G$ has a torsion-free subgroup of finite index $\G$, and since $\mathbf{G}$  over $k$ is a linear group,   Malcev's theorem shows that $\G$  residually finite. Now $\G_p$ may be taken to be any sequence of finite index subgroups of $\G$, for example a sequence of normal subgroups with trivial intersection.

\begin{proof}
 Being an algebraic group of rank 2, $\mathbf{G}$ satisfies Kazhdan's property T (see \cite[Chap. 1, Theorem 1.6.1]{HV}) and so does the lattice $\G$. A fortiori, $\G$ is not virtually indicable and  therefore Lemma \ref{L -notvirtind} applies. 
\end{proof}

\section{Critical densities, factors and lifts}\label{S - critical density}

The present section defines two relevant critical densities for studying the periodic flat plane problem in the density model:   $\delta_{\IR^2}$ (the critical  density for being hyperbolic) and $\delta_{\ZI^2}$ (the critical density for containing $\ZI^2$).  Existence follows from the ``lift/quotient stability in the density model'' of the corresponding group property (compare Proposition \ref{P - hyp-Z2 critical densities}).  

We call a group property $P$ \emph{lift stable} (resp.\ \emph{factor stable} or \emph{quotient stable}), if for every surjective morphism $\G\surj \G'$ where $\G'$ (resp.\ $\G$)  has property $P$, then $\G$ (resp.\ $\G'$) has property P.   It is clear that
\begin{itemize}
\item 
 $P$ lift stable $\ssi$  $\neg P$ factor stable
 \item $P$ lift stable \&  $P$ factor stable $\impl$ $P$ trivial. (In this respect factor and lift stable properties are  ``half-trivial'' properties.)
\end{itemize}

The following result explains the origin, for lift/factor stable group properties, of transition thresholds observed in the density models. 

\begin{proposition}\label{P -Phase transition}
Let $P$ be a lift stable (resp.\ $P'$ be a factor stable) group property and let $(\G,X,\{\G_p\})$ be a deterministic data. There exists a critical parameter $\delta_{P}=\delta_{P}(\G,X,\{\G_p\})$ (resp.\ $\delta_{P'}=\delta_{P'}(\G,X,\{\G_p\})$) such that the random group $\G$ in the density model of parameter $\delta$ over $(\G,X,\{\G_p\})$ satisfies the following:
\begin{itemize}
\item $\G$ has property $P$   if $\delta>\delta_P$ (resp. $\G$ has property $P'$ if $\delta<\delta_{P'}$) 
\item $\G$ doesn't have property $P$  if $\delta<\delta_P$ (resp. $\G$ doesn't have property $P'$ if $\delta>\delta_P'$) 
\end{itemize}
 (with overwhelming probability). In particular, $\delta_{\neg P}\leq \delta_{P}$ and  $\delta_{P'}\leq \delta_{\neg P'}$ respectively  --- leading to a ``threshold'':
 \[
\includegraphics[width=4cm]{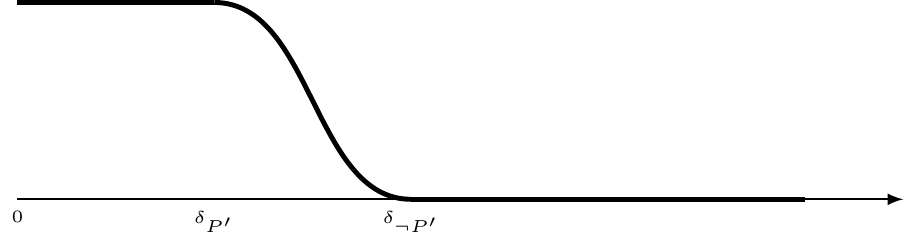}
\]
\begin{center}\emph{``Half-trivial'' properties have one threshold} 
\end{center}
\end{proposition}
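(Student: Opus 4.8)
The plan is to define $\delta_P$ as an infimum of densities for which $P$ occurs with overwhelming probability, and then exploit lift stability together with the monotonicity provided by Lemma~\ref{L - hyperbolic} (and more precisely its proof mechanism: adding chambers to $A$ passes from $\G_A$ to a quotient-type relationship). First I would recall the key structural fact underlying the model: if $A\subset B\subset\sC_p$ then there is a natural surjection relating $\G_B$ and $\G_A$ — indeed $X_B\subset X_A$, so the covering $\tilde X_B\to X_B$ factors through a map to $\tilde X_A$, and on fundamental-group/Galois-group level this yields a surjective morphism $\G_A\surj\G_B$ (the larger the removed set, the more relations are killed, hence $\G_B$ is a quotient of $\G_A$). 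Consequently, if $P$ is lift stable and $\G_B$ has $P$, then so does $\G_A$: \emph{having $P$ is a monotone-decreasing event in the size of $A$}.

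Next I would use this monotonicity to produce the threshold. For each $p$, the density model picks $|\sC_p|^\delta$ chambers; increasing $\delta$ stochastically increases $A$ (one can couple the processes so that the sample for $\delta$ is contained in the sample for $\delta'>\delta$). Combined with the monotonicity of the event $\{\G_A \text{ has } P\}$ under inclusion of $A$, this shows that the function
\[
\delta\mapsto \limsup_{p\to\infty}\IP(\G_A \text{ has } P)
\]
is monotone in $\delta$ — it can only decrease as $\delta$ grows if $P$ is factor stable, and can only increase if $P$ is lift stable. Wait, I must be careful about the direction: $P$ lift stable means $P$ is preserved when passing to $\G_A$ from its quotient $\G_B$, i.e.\ having $P$ is upward-closed as $A$ \emph{shrinks}; equivalently $\neg P$ is upward-closed as $A$ grows, and $\neg P$ is factor stable. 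So for a lift stable $P$, the probability of $P$ is non-decreasing in $\delta$ after all (more chambers removed $\Rightarrow$ more quotients $\Rightarrow$ $P$ more likely, since $P$ survives quotients in the wrong... ). The cleanest formulation: set
\[
\delta_P:=\inf\{\delta>0 : \G \text{ in the density model of parameter }\delta \text{ has } P \text{ with overwhelming probability}\},
\]
with the convention $\inf\emptyset=+\infty$. By the coupling-plus-monotonicity argument, for every $\delta>\delta_P$ the random group has $P$ with overwhelming probability, and for $\delta<\delta_P$ it does not (otherwise the infimum would be smaller). The statement for factor stable $P'$ is obtained by applying this to $\neg P'$, which is lift stable, giving $\delta_{P'}=\delta_{\neg(\neg P')}$ and reversing the inequalities. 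The inequalities $\delta_{\neg P}\leq\delta_P$ and $\delta_{P'}\leq\delta_{\neg P'}$ then follow because $\neg P$ is factor stable, so its threshold lies to the left.

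The main obstacle — and the only point requiring genuine care — is establishing the stochastic coupling between the density-$\delta$ and density-$\delta'$ processes compatibly with the combinatorial inclusion $A_\delta\subset A_{\delta'}$, \emph{and} reconciling the fact that the chambers are chosen ``uniformly and independently'' (hence with repetition, and not literally nested) with the monotonicity of the group-theoretic event. The resolution is that repetitions do not affect $X_A$ (removing a cell twice is the same as removing it once), so one may pass to the underlying set of distinct chambers; and one checks that the distribution of this underlying set is stochastically monotone in the number of draws $|\sC_p|^\delta$. Once this is in place, the rest is the formal infimum argument above, and the ``threshold'' picture — a single critical value with $P$ on one side and $\neg P$ on the other, possibly with a gap $[\delta_{\neg P},\delta_P]$ in between where behavior is not pinned down — follows immediately from $\delta_{\neg P}\leq\delta_P$.
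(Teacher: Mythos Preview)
Your overall strategy matches the paper's: establish monotonicity of $\IP(\G_A\text{ has }P)$ in the number of removed chambers, then define $\delta_P$ as an infimum. However, you have the direction of the key surjection reversed. If $A\subset B\subset\sC_p$, then $X_A/\G_p$ is obtained from $X_B/\G_p$ by \emph{gluing back} the 2-cells in $B\setminus A$; since attaching 2-cells imposes relations on $\pi_1$, it is $\G_A=\pi_1(X_A/\G_p)$ that is a quotient of $\G_B=\pi_1(X_B/\G_p)$, i.e.\ $\G_B\surj\G_A$, not $\G_A\surj\G_B$ as you write. (This is exactly the ``mirror image'' of Gromov's model noted in the remark closing \textsection\ref{S - basic idea}: here one \emph{removes} relations rather than adding them, so larger $A$ means larger group.) Your parenthetical ``the larger the removed set, the more relations are killed, hence $\G_B$ is a quotient of $\G_A$'' is therefore backwards, and this is what causes the visible confusion in your middle paragraph.

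Once the arrow is corrected, lift stability reads: if $\G_A$ has $P$ and $\G_B\surj\G_A$, then $\G_B$ has $P$. So the event $\{\G_A\text{ has }P\}$ is monotone \emph{increasing} in $A$, and $\IP(P)$ is non-decreasing in $\delta$ --- which is the conclusion you eventually land on, and your definition of $\delta_P$ as an infimum then works. The paper packages this monotonicity as Lemma~\ref{L -Phase transition}, proved not via stochastic coupling but by a direct count over tuples $(c_1,\ldots,c_{\beta_p})\in\sC_p^{\beta_p}$: if the subtuple $(c_1,\ldots,c_{\alpha_p})$ yields a group with $P$, then so does the full tuple, since $\{c_1,\ldots,c_{\alpha_p}\}\subset\{c_1,\ldots,c_{\beta_p}\}$. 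This counting argument is exactly your coupling written out combinatorially (and sidesteps the repetition issue you flag, since tuples are compared coordinatewise). Modulo the reversed surjection, your approach and the paper's coincide.
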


It seems that in known cases $\delta_P=\delta_{\neg P}$. For  $P$ such that  $\delta_P>\delta_{\neg P}$ it could be informative to determine the true behaviour of the probabilities on the interval between $\delta_{\neg P}$ and $\delta_P$.

\begin{example}
The proposition shows the existence of critical densities for
\begin{itemize}
\item Kazhdan's property T (factor stable), say $\delta_T$,
\item  Serre's property FA (factor stable), say $\delta_{FA}$ 
\item largeness, which is the existence of a finite index subgroup of $\G$ surjecting to a non abelian free group (lift stable), say $\delta_{\surj F_2}^{\virt}$
\item virtual indicability (lift stable), say $\delta_{\surj\ZI}^{\virt}$
\end{itemize}
where 
\[
\delta_T\leq \delta_{FA}\leq \delta^{\virt}_{\surj F_2}\leq \delta_{\surj \ZI}^{\virt}
\]
 Observe that if  $P\impl P'$ where both $P,P'$ are factor stable/both are lift stable or if $P\impl \neg P'$ where $P$ is factor stable and $P'$ is lift stable, then $\delta_P\leq \delta_{P'}$.
\end{example}

The proof of Proposition \ref{P -Phase transition} will follow from an assertion that ``the random group at density  $\delta'$ is a quotient of the random group at a higher density $\delta>\delta'$", which we make precise in Lemma \ref{L -Phase transition}. The existence of transition thresholds does not generalize to properties which are not lift stable or factor stable. For example, a property like $T\vee  \stackrel{\virt}{\surj} \ZI$  admits a priori two thresholds. In general, the above result can be extended to any finite conjunction of properties which are lift stable or factor stable, resulting in a finite number of phase transitions.

We prove Lemma  \ref{L -Phase transition} in a more general model, called the ``$\alpha$-model", which extends both the bounded model (where $\alpha$ is bounded) and the density model of parameter $\delta$ for any $\delta>0$ (where $\alpha_p=|\sC_p|^\delta$):

\begin{definition}  Let $(X,\G,\{\G_p\})$ be a deterministic data and $\alpha: \IN\to\IN$ be an increasing sequence.   The \emph{$\alpha$-model} over $(X,\G,\{\G_p\})$ is the $(X,\G,\{\G_p\},\{\IP_p\})$-model of random groups in which $\IP_p$ selects $\alpha_p$ chambers in $\sC_p$, uniformly and independently at random. Given $\alpha,\beta : \IN\to\IN$, we say that the $\alpha$-model is dominated by the $\beta$-model if $\alpha_p\leq \beta_p$ for every $n$ sufficiently large.
\end{definition}

Zero density models, in which  say $\alpha_p:=o(|\sC_p|^\delta)$ for every $\delta>0$ (this includes the bounded model), can be used to provide variations on the model density $\delta>0$, for example: $\alpha_p:=|\sC_p|^\delta + o(|\sC_p|^\delta)$.

The bounded model is dominated by the density model of parameter $\delta$, which is dominated by the density model of parameter $\delta'>\delta$.
Lemma \ref{L -Phase transition} makes explicit the fact that if $\alpha$ dominates $\beta$, then ``the random group in the $\alpha$-density model is a quotient of the  random group in the $\beta$-density model".
We prove the case of lift stable properties only, since the corresponding statement for factor stable properties is  analogous. 

\begin{lemma}\label{L -Phase transition}
Let $P$ be a lift stable group property and  $(\G,X,\{\G_p\})$ be a  deterministic data. If the $\alpha$-model on $(\G,X,\{\G_p\})$ is dominated by the $\beta$-model on $(\G,X,\{\G_p\})$, then the probability that the random group has property P in the $\alpha$-model is dominated by the probability that the random group has property $P$ in the $\beta$-model. 
\end{lemma}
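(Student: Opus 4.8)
The plan is to exhibit an explicit coupling between the two random constructions so that, with probability one, the $\alpha$-random group is a quotient of the $\beta$-random group; the conclusion then follows immediately from lift stability. First I would recall that by hypothesis there is $p_0$ with $\alpha_p\le\beta_p$ for all $p\ge p_0$, so it suffices to treat $p\ge p_0$. Fix such a $p$ and consider the $\beta$-model: it selects a subset $B\subset\sC_p$ by drawing $\beta_p$ chambers uniformly and independently at random. From such a draw one obtains a subset $A\subset B$ by keeping only the first $\alpha_p$ of the $\beta_p$ draws (or, if one prefers to match the exact law of the $\alpha$-model, by keeping the first $\alpha_p$ draws and noting that truncating an i.i.d.\ uniform sample of length $\beta_p$ to its first $\alpha_p$ coordinates yields exactly an i.i.d.\ uniform sample of length $\alpha_p$). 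Thus $A$ is distributed as the $\alpha$-model subset and satisfies $A\subset B$ on the same probability space.

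The second step is the algebraic observation that $A\subset B\subset\sC_p$ produces a surjection $\G_B\twoheadrightarrow\G_A$. This is essentially contained in the discussion preceding the lemma (the statement that ``the random group at lower density is a quotient of the random group at higher density''), and it comes from the covering-space picture: since $A\subset B$ we have $X_B = X-\bigcup_{C\in B}\mathring C \subset X-\bigcup_{C\in A}\mathring C = X_A$, and this inclusion of $\G_p$-invariant subspaces is $\pi_1$-surjective onto no one, but at the level of the Galois coverings $\tilde X_B\to X_B/\G_p$ and $\tilde X_A\to X_A/\G_p$ one gets (after choosing compatible basepoints) a commuting diagram inducing $\G_B\twoheadrightarrow\G_A$ fitting over the identity of $\G_p$; concretely, filling in the removed open cells of $B\setminus A$ kills the corresponding free generators of $K_B$ and realizes $\G_A$ as the quotient of $\G_B$ by the normal closure of those boundary loops. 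I would state this as a short sub-claim and justify it by the inclusion $X_B\hookrightarrow X_A$ together with functoriality of the universal cover / Galois group construction, exactly as in Lemma~\ref{L - lifting lemma}'s use of lifting maps.

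Combining the two steps: on the coupled probability space, whenever $\G_A$ has property $P$ — wait, that is the wrong direction. Because $P$ is \emph{lift stable} and $\G_B\twoheadrightarrow\G_A$, if $\G_A$ has $P$ then $\G_B$ has $P$. Hence, pointwise on the coupled space, $\{\G_A\text{ has }P\}\subset\{\G_B\text{ has }P\}$, so $\IP(\G_A\text{ has }P)\le\IP(\G_B\text{ has }P)$; since $A$ has the law of the $\alpha$-model random subset and $B$ that of the $\beta$-model random subset, this says precisely that the probability that the random group has $P$ in the $\alpha$-model is at most the corresponding probability in the $\beta$-model, which is the assertion (``dominated by'') for all $p\ge p_0$, hence in the limit.

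The only genuinely delicate point is the sub-claim that $A\subset B$ gives a surjection $\G_B\twoheadrightarrow\G_A$ rather than some other relation, and that it is natural enough to be set up simultaneously for all chambers in the coupling (i.e.\ with basepoints chosen once and for all, independently of the random draw). I expect this to be routine given the covering-space formalism already in place, but it is where care is needed: one must check that the surjectivity is insensitive to the basepoint choices and that it is genuinely a quotient map of the groups $\G_A,\G_B$ over $\G_p$, not merely of the free kernels $K_A,K_B$. Everything else — the truncation coupling and the one-line deduction from lift stability — is immediate.
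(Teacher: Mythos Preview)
Your argument is correct and is essentially the same as the paper's: the paper phrases the coupling as a counting identity, writing the $\alpha$-probability as a count over $\beta_p$-tuples divided by $|\sC_p|^{\beta_p-\alpha_p}$ and then using lift stability to pass from $\G_{\{c_1,\dots,c_{\alpha_p}\}}$ to $\G_{\{c_1,\dots,c_{\beta_p}\}}$, which is exactly your truncation coupling. The paper takes the surjection $\G_B\twoheadrightarrow\G_A$ for $A\subset B$ as understood (it is announced just before the lemma) and does not spell it out; your justification via ``$X_A$ is $X_B$ with the 2-cells of $B\setminus A$ reattached, so $\pi_1(X_B)\twoheadrightarrow\pi_1(X_A)$ and hence $\G_B\twoheadrightarrow\G_A$ over $\G_p$'' is the right one and more explicit than what the paper provides.
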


In particular, if $P$ occurs with overwhelming probability in the $\alpha$-model, then it does so in the $\beta$-model, establishing Prop.\ \ref{P -Phase transition}.  

\begin{proof}
If $P$ is lift stable, then 
\begin{align*}
|\{c_1,\ldots,c_{\alpha_p}&\in \sC_p\mid \G_{\{c_1,\ldots,c_{\alpha_p}\}}\mathrm{~has~property~P}\}|\\
&= 
{{|\{c_1,\ldots,c_{\beta_p}\in \sC_p\mid \G_{\{c_1,\ldots,c_{\alpha_p}\}}\mathrm{~has~property~P}\}|}\over {|\sC_p|^{\beta_p-\alpha_p}}}\\
&\leq 
{{|\{c_1,\ldots,c_{\beta_p}\in \sC_p\mid \G_{\{c_1,\ldots,c_{\beta_p}\}}\mathrm{~has~property~P}\}|}\over {|\sC_p|^{\beta_p-\alpha_p}}}
\end{align*}
Therefore if $A_\alpha$ denote the random subset of $\sC_p$ in the $\alpha$-model and $A_\beta$ denote the random subset of $\sC_p$ in the $\beta$-model, then:
\begin{align*}
\IP(\G_{A_\alpha}\mathrm{~has~property~P})\leq \IP(\G_{A_\beta}\mathrm{~has~property~P}).
\end{align*}
This proves the assertion.
\end{proof}

However, a property like ``hyperbolicity'' is not factor stable nor lift stable, even though Gromov establishes sharp phase transition in his density model.

The following weakening of lift/factor-stability is consistent with the density model and allows for more general properties.

\begin{definition}\label{D-Liftquotient stable density} 
A group property $P$ is said to be \emph{lift stable (resp.\ factor stable or quotient stable) in the density model} if the random group at density $\delta$ satisfies property $P$ with overwhelming probability  whenever the random group at density $\delta'<\delta$ (resp.\ $\delta'>\delta$) satisfies property $P$ with overwhelming probability.   
\end{definition}

Thus, we have (tautologically) a critical density $\delta_{P}(\G,X,\{\G_p\})$ for every lift stable property $P$ in the density model, and similarly for factor stable properties. 
Furthermore
\[
\delta_{\neg P}(\G,X,\{\G_p\}) \leq \delta_{P}(\G,X,\{\G_p\})
\]
if $P$ is lift stable and $\neg P$ factor stable.

\begin{proposition}\label{P - hyp-Z2 critical densities} Let $(\G,X,\{\G_p\})$ be a deterministic data, where $X$ is a CAT(0) space and $\G$ acts isometrically. 
Then:
\begin{enumerate}
\item  Gromov-hyperbolicity is lift stable in the density model over $(\G,X,\{\G_p\})$. Therefore, there exists a critical density, denoted $\delta_{\Hyp}(\G,X,\{\G_p\})$, for the random group in the density model to be hyperbolic, and a critical density, denoted $\delta_{\IR^2}(\G,X,\{\G_p\})$, for the random universal cover to contain $\IR^2$.
\item The existence of  subgroups isomorphic to $\ZI^2$ is factor stable in the density model.  Therefore, there exists a critical density, denoted $\delta_{\ZI^2}(\G,X,\{\G_p\})$, for the random group in the density model to contain a subgroup isomorphic to $\ZI^2$, and a critical density, denoted $\delta_{\neg \ZI^2}(\G,X,\{\G_p\})$, for the random group to contain no subgroup isomorphic to $\ZI^2$.
\end{enumerate}
\end{proposition}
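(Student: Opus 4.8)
The statement to prove is Proposition \ref{P - hyp-Z2 critical densities}: that Gromov-hyperbolicity is lift stable in the density model, and that the existence of a $\ZI^2$ subgroup is factor stable in the density model, so that the corresponding critical densities exist. The key point is that both properties behave monotonically with respect to the partial order $A\subset B$ on subsets of $\sC_p$ (Lemma \ref{L - hyperbolic}), and that one can transfer this combinatorial monotonicity into a monotonicity of probabilities as the density increases. I would structure the argument so that the probabilistic transfer step is isolated and done once, then applied to each property in turn.

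\textbf{Step 1: reduce to a coupling statement.} Fix a deterministic data $(\G,X,\{\G_p\})$ with $X$ CAT(0) and $\G$ isometric, and let $\delta'<\delta$. For each $p$, I would exhibit a coupling of the random set $A'_p\subset\sC_p$ at density $\delta'$ (of size $\approx|\sC_p|^{\delta'}$) with the random set $A_p\subset\sC_p$ at density $\delta$ (of size $\approx|\sC_p|^{\delta}$) such that $A'_p\subset A_p$ almost surely. This is essentially the same bookkeeping as in Lemma \ref{L -Phase transition}: since $|\sC_p|^{\delta'}=o(|\sC_p|^{\delta})$, one builds $A_p$ by first drawing $A'_p$ uniformly and then adjoining $|\sC_p|^{\delta}-|\sC_p|^{\delta'}$ further independent uniform picks; after symmetrization the law of $A_p$ is (up to the usual negligible correction for repeated picks) the density-$\delta$ law, and $A'_p\subset A_p$ by construction. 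Alternatively I can simply re-run the computation of Lemma \ref{L -Phase transition} directly for each of the two properties below, since that lemma's proof only used monotonicity of the family $\{A : \G_A \text{ has } P\}$ under $\supset$ for lift stable $P$ (resp. under $\subset$ for factor stable $P$).

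\textbf{Step 2: hyperbolicity is lift stable in the density model.} By Lemma \ref{L - hyperbolic}(1), if $A\subset B$ and $\G_A$ is hyperbolic then $\G_B$ is hyperbolic. Hence the event ``$\G_{A_p}$ is hyperbolic'' is an up-set in the inclusion order on subsets of $\sC_p$. Under the coupling of Step 1, if $\G_{A'_p}$ is hyperbolic then $\G_{A_p}$ is hyperbolic, so $\IP(\G_{A'_p}\text{ hyperbolic})\leq\IP(\G_{A_p}\text{ hyperbolic})$; therefore if hyperbolicity holds with overwhelming probability at density $\delta'$ it holds with overwhelming probability at every $\delta>\delta'$. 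By Definition \ref{D-Liftquotient stable density} this is exactly lift stability in the density model, so $\delta_{\Hyp}(\G,X,\{\G_p\})$ exists (as the infimum of densities at which hyperbolicity holds with overwhelming probability, possibly $0$ or $\infty$). The critical density $\delta_{\IR^2}$ for the random universal cover $\tilde X_{A_p}$ to contain a flat plane is handled identically: by the flat plane theorem a CAT(0) space is hyperbolic iff it contains no isometrically embedded $\IR^2$, and ``$\tilde X_A$ contains $\IR^2$'' is the complement of ``$\G_A$ hyperbolic'' (invariance of hyperbolicity under the quasi-isometry $\G_A\sim\tilde X_A$), hence it is a down-set; factor stability of this complementary property gives $\delta_{\IR^2}$, and indeed $\delta_{\IR^2}\le\delta_{\Hyp}$ trivially.

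\textbf{Step 3: containing $\ZI^2$ is factor stable in the density model.} By Lemma \ref{L - hyperbolic}(2), if $A\subset B$ and $\G_B$ contains a copy of $\ZI^2$, then $\G_A$ contains a copy of $\ZI^2$. So ``$\G_{A_p}$ contains $\ZI^2$'' is a \emph{down}-set in the inclusion order. Using the coupling with $\delta'<\delta$ (now reading it in the other direction: the smaller set $A'_p$ is the one drawn at the \emph{lower} density), if $\G_{A_p}$ contains $\ZI^2$ at density $\delta$ then $\G_{A'_p}$ contains $\ZI^2$ at density $\delta'$ — wait, this is the direction ``$\delta$ large $\Rightarrow$ still has $\ZI^2$ at smaller density'', which is the wrong way. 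The correct reading: Definition \ref{D-Liftquotient stable density} for factor stable asks that if the random group at density $\delta'>\delta$ has $P$ with overwhelming probability then so does the random group at density $\delta$. With $\delta<\delta'$, couple so that $A_p$ (density $\delta$) $\subset$ $A'_p$ (density $\delta'$); if $\G_{A'_p}$ contains $\ZI^2$ then by Lemma \ref{L - hyperbolic}(2) so does $\G_{A_p}$; hence $\IP(\ZI^2\inj\G_{A'_p})\le\IP(\ZI^2\inj\G_{A_p})$, and overwhelming probability at $\delta'$ forces it at $\delta$. This is precisely factor stability in the density model, so $\delta_{\ZI^2}(\G,X,\{\G_p\})$ exists as the supremum of densities at which $\ZI^2\inj\G$ holds with overwhelming probability. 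Complementing, ``$\G_A$ contains no $\ZI^2$'' is an up-set, hence lift stable, giving $\delta_{\neg\ZI^2}$, and $\delta_{\neg\ZI^2}\le\delta_{\ZI^2}$ wait — since the property and its negation cannot both hold with overwhelming probability on an overlapping range, one gets the ordering stated in the text, $\delta_{\neg P}\le\delta_P$ when $P$ is lift and $\neg P$ factor stable. For $\ZI^2$: $\neg\ZI^2$ is lift stable and $\ZI^2$ is factor stable, so $\delta_{\neg\ZI^2}\le\delta_{\ZI^2}$.

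\textbf{Main obstacle.} The only genuinely delicate point is the coupling in Step 1: the density model picks $|\sC_p|^{\delta}$ chambers \emph{with repetition allowed} (uniformly and independently), so ``$A'_p\subset A_p$'' must be phrased at the level of the underlying ordered tuples (or multisets), and one must check that the probability of any repeated pick — which would spoil the exact nesting of the \emph{sets} — is negligible as $p\to\infty$, i.e. $|\sC_p|^{2\delta}/|\sC_p|\to 0$ when $\delta<1/2$ and more care (a union bound over the $O(|\sC_p|^{2\delta})$ pairs) for general $\delta$; in fact for the monotone-event argument repetitions are harmless because adding a repeated chamber does not change $X_{A_p}$, so the set-theoretic inclusion of \emph{supports} is automatic and no negligibility estimate is actually needed. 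Once this is observed, everything reduces to the combinatorial monotonicity already recorded in Lemma \ref{L - hyperbolic}, and the proof is short. $\square$
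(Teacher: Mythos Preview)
Your argument is correct and is essentially the paper's own proof: the paper re-runs the counting from Lemma \ref{L -Phase transition} verbatim, replacing genuine lift/factor stability of $P$ by the monotonicity in $A\subset B$ supplied by Lemma \ref{L - hyperbolic}, which is exactly your coupling (extend the $|\sC_p|^{\delta'}$-tuple to a $|\sC_p|^{\delta}$-tuple by further independent draws). Your observation that repetitions are harmless because $X_A$ depends only on the support of the tuple is precisely what makes the paper's tuple-counting inequality go through, and your self-corrections in Step~3 land on the right implication; the only cosmetic difference is that the paper phrases everything as an inequality between cardinalities of tuple sets rather than in coupling language.
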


By the periodic flat plane theorem, 
\[
\delta_{\IR^2}(\G,X,\{\G_p\}) \geq \delta_{\ZI^2}(\G,X,\{\G_p\}).
\]

\begin{problem}
Given a deterministic data $(\G,X,\{\G_p\})$ where $X$ is a CAT(0) space and $\G$ acts properly uniformly isometrically, is it true that 
\[
\delta_{\IR^2}(\G,X,\{\G_p\})=\delta_{\ZI^2}(\G,X,\{\G_p\})?
\]
\end{problem}

The flat closing conjecture implies that the answer is positive.

\begin{proof}[Proof of Proposition \ref{P - hyp-Z2 critical densities}]
(1) By Lemma \ref{L - hyperbolic} (a), we have:
\begin{align*}
|\{c_1,\ldots,c_{\alpha_p}&\in \sC_p\mid \G_{\{c_1,\ldots,c_{\alpha_p}\}}\mathrm{~is~hyperbolic}\}|\\
&= 
{{|\{c_1,\ldots,c_{\beta_p}\in \sC_p\mid \G_{\{c_1,\ldots,c_{\alpha_p}\}}\mathrm{~is~hyperbolic}\}|}\over {|\sC_p|^{\beta_p-\alpha_p}}}\\
&\leq 
{{|\{c_1,\ldots,c_{\beta_p}\in \sC_p\mid \G_{\{c_1,\ldots,c_{\beta_p}\}}\mathrm{~is~hyperbolic}\}|}\over {|\sC_p|^{\beta_p-\alpha_p}}}
\end{align*}
This shows that the random group in the $\beta$-model is hyperbolic provided that the random in the $\alpha$-model is hyperbolic, for any $\beta$ dominating $\alpha$, which applies in particular to the density model. This shows the existence of $\delta_{\Hyp}$ and similarly of $\delta_{\IR^2}$. 

(2) The same proof applies (using now Lemma \ref{L - hyperbolic} (b)), with a reverse inequality. 
Namely we have:
\begin{align*}
|\{c_1,\ldots,c_{\alpha_p}&\in \sC_p\mid \G_{\{c_1,\ldots,c_{\alpha_p}\}}\mathrm{~contains~}\ZI^2\}|\\
&= 
{{|\{c_1,\ldots,c_{\beta_p}\in \sC_p\mid \G_{\{c_1,\ldots,c_{\alpha_p}\}}\mathrm{~contains~}\ZI^2\}|}\over {|\sC_p|^{\beta_p-\alpha_p}}}\\
&\geq 
{{|\{c_1,\ldots,c_{\beta_p}\in \sC_p\mid \G_{\{c_1,\ldots,c_{\beta_p}\}}\mathrm{~contains~}\ZI^2\}|}\over {|\sC_p|^{\beta_p-\alpha_p}}}
\end{align*}
This shows that the random group in the $\alpha$-model contains $\ZI^2$ provided that the random in the $\beta$-model contains $\ZI^2$, for any $\beta$ dominating $\alpha$. This shows the existence of $\delta_{\ZI^2}$  and similarly of $\delta_{\neg \ZI^2}$.
\end{proof}

\begin{remark}
The above discussion of ``lift stable'' and ``factor stable'' properties also applies to the classical Gromov models and shows the existence of critical densities, for example in the density model. Recall that there exist morphisms between the random group of different models at the \emph{same} density, for example when comparing the usual model with the triangular model, see \cite[I.3.g]{ollivier}. The above results are of a different nature in that they concern morphisms between random groups at distinct density regimes in a given model. 
\end{remark}

 \section{General facts on the random group at positive density}\label{S - Phase transition}

The random group $\G$ in Gromov's density model exhibits the following well-known phase transition at density $\delta={1\over 2}$ (see \cite{gromov1} and Theorem 11  in \cite{ollivier}):
\begin{enumerate}
\item If $\delta<1/2$, then with overwhelming probability $\G$ is infinite hyperbolic, torsion free and of geometric dimension 2.
\item If $\delta>1/2$, then with overwhelming probability $\G$ is trivial (either $\{e\}$ or $\ZI/2\ZI$).
\end{enumerate} 
In other words, $\delta_{\Hyp}=\delta_{\mathrm{triv}}=\delta_{\neg \Hyp}=\delta_{\neg \mathrm{triv}}=1/2$.

The analogous result in our setting takes the following form. The proof, as in Gromov's case, is an illustration of the box principle.

\begin{proposition}\label{P- r separated} Let $(\G,X,\{\G_p\})$ be a deterministic data, and $r\in \IN_{\geq 2}$ be fixed. Then the random space $X_A$ in the density model over $(X,\G,\{\G_p\})$ exhibits a phase transition with overwhelming probability:
\begin{enumerate}
\item If $\delta<1/2$, then the random set $A\subset X$ is $r$-separated (that is, the simplicial distance between any two elements of $A$ is at least $r$).
\item If $\delta>1/2$, then there exists at least $r$  pairs of adjacent elements of $A$.  
\end{enumerate} 
\end{proposition}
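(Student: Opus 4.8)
The statement is a ``coupon-collector / birthday-problem'' dichotomy, and the natural strategy is the second-moment (Paley--Zygmund) method together with a union bound, exactly parallel to Gromov's $\delta = 1/2$ phase transition. Write $N = |\sC_p|$ and recall that in the density model we choose $m = \lfloor N^\delta\rfloor$ chambers of $\sC_p$ uniformly and independently at random (with replacement, so the elements are i.i.d.\ uniform on $\sC_p$). The key geometric input is that for each chamber $C \in \sC_p$ the number of chambers of $X$ (equivalently, of $\sC_p$, once $p$ is large enough that the $r$-ball around $C$ embeds in $X/\G_p$) lying at simplicial distance $< r$ from $C$ is bounded by a constant $\lambda = \lambda(X,\G,r)$ depending only on the deterministic data and on $r$, not on $p$; this holds because $X/\G$ is compact, so $X$ has bounded geometry, and hence the number of chambers in a ball of radius $r$ in $X$ is at most some $\lambda(r)$. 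I would state and use this as an elementary preliminary observation.

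\textbf{Case $\delta < 1/2$ (separation).} Fix $r$. For an ordered pair $(i,j)$, $1 \le i < j \le m$, let $E_{ij}$ be the event that the $i$-th and $j$-th chosen chambers are at simplicial distance $< r$. Conditioning on the $i$-th choice, the $j$-th choice is uniform on $\sC_p$, so $\IP(E_{ij}) \le \lambda/N$. By the union bound,
\[
\IP(A \text{ is not } r\text{-separated}) \le \binom{m}{2}\frac{\lambda}{N} \le \frac{\lambda}{2}\, m^2 N^{-1} \le \frac{\lambda}{2}\, N^{2\delta - 1} \xrightarrow[p\to\infty]{} 0,
\]
since $2\delta - 1 < 0$. (A small caveat to address: if the process draws with replacement, one first checks that all $m$ chambers are distinct with overwhelming probability, which is the same computation with $r$ replaced by $1$; alternatively one simply observes that ``$r$-separated'' forces distinctness for $r \ge 2$, and the above bound already covers the case $j = i$ with distance $0 < r$ absorbed in $E_{ij}$ after replacing $\binom{m}{2}$ by $m^2$.) This gives part (1).

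\textbf{Case $\delta > 1/2$ (many adjacencies).} Here I would use the second moment method on the number $Z$ of \emph{adjacent} pairs among the $m$ chosen chambers, i.e.\ $Z = \sum_{i<j} \mathbf{1}_{F_{ij}}$ where $F_{ij}$ is the event that the $i$-th and $j$-th chambers share a face (are at distance $1$). Let $\mu$ denote the number of pairs of adjacent chambers in $X/\G$ per chamber; bounded geometry gives a lower bound $\IP(F_{ij}) \ge c/N$ for a constant $c = c(X,\G) > 0$ (as long as $X$ genuinely has adjacent chambers, which holds for any $2$-complex arising as a Bruhat--Tits building or more generally whenever $X$ is connected of dimension $2$; one may assume this). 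Then $\IE[Z] \ge \binom{m}{2} c/N \gg N^{2\delta - 1} \to \infty$. For the variance, decompose $\IE[Z^2] = \sum \IP(F_{ij} \cap F_{kl})$ over quadruples; pairs sharing no index contribute (essentially) $\IP(F_{ij})\IP(F_{kl})$ up to lower order, pairs sharing one index contribute $O(m^3 N^{-2})$, and pairs with $\{i,j\} = \{k,l\}$ contribute $\IE[Z]$. One gets $\mathrm{Var}(Z) = O(\IE[Z] + m^3 N^{-2}) = o(\IE[Z]^2)$ because $\IE[Z]^2 \asymp m^4 N^{-2}$ and $m^3 N^{-2} / (m^4 N^{-2}) = 1/m \to 0$. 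Chebyshev's inequality then gives $Z \ge \IE[Z]/2 \ge r$ with overwhelming probability, which is part (2).

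\textbf{Main obstacle.} The arithmetic is routine; the one genuinely delicate point is the passage from ``distance in $X$'' to ``distance in $X/\G_p$'', i.e.\ the assertion that two randomly chosen \emph{equivariant} chambers $C, C' \in \sC_p$ are ``adjacent'' (or ``at distance $< r$'') exactly when their lifts to $X$ can be chosen adjacent, and that the conditional probability of this given $C$ is $\Theta(1/N)$. This requires that $p$ be large enough that the injectivity radius of $X/\G_p$ exceeds $r$ — a standard consequence of $\bigcap \G_p$ being suitably small, or can simply be included in the hypotheses — so that balls of radius $r$ in $X/\G_p$ are isometric to balls in $X$; then the count of chambers within distance $r$ of a fixed chamber is the constant $\lambda(r)$ coming from bounded geometry of $X$, uniformly in $p$. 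Once this is set up correctly, both halves are immediate union-bound / second-moment estimates as above.
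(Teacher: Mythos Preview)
Your proof is correct and follows essentially the same route as the paper. For part~(1) the paper phrases the computation as a recursive/conditional-probability argument (conditioning on the first $i-1$ choices and multiplying out) to obtain a product-form lower bound on $\IP(E_n)$; your direct union bound over pairs is the same first-moment/birthday computation in its simplest form, and is if anything cleaner. For part~(2) the paper says only that the proof ``is of a similar nature'' and gives no details; the second-moment/Chebyshev argument you supply is the standard way to complete it and is correct.

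One minor point on your ``main obstacle'': the injectivity-radius hypothesis you propose to add is not actually needed. For~(1), the upper bound $\IP(E_{ij})\le\lambda/N$ holds because the number of $\G_p$-orbits at simplicial distance $<r$ from a fixed orbit is at most the number of chambers of $X$ at distance $<r$ from a fixed chamber, and this is $\le\lambda(r)$ by bounded geometry of $X$, regardless of how the covering $X\to X/\G_p$ behaves. For~(2), the lower bound $\IP(F_{ij})\ge c/N$ with $c\ge 1$ follows simply from connectedness of $X/\G_p$ (every orbit has at least one distinct adjacent orbit once $|\sC_p|\ge 2$). So nothing need be added to the hypotheses.
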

\newcommand{\diam}{\mathrm{diam}}
\begin{proof}
 (1) Let $Y_1,Y_2,Y_3,\ldots$ be a sequence of independent uniformly distributed random variables with values in $\sC_p$
and for $m\leq i\leq |\sC_p|$ denote by $E_p^{(i)}(m)$ the event
\[
E_p^{(i)}(m)=\left \{ \diam(Y_{k_1},\ldots, Y_{k_m})\geq r\mid \forall~~1\leq  k_1<\cdots< k_m\leq i\right\}.
\]
We have 
\begin{align*}
\IP(E_p^{(i)}(m))&=\\
&\hskip-1.5cm\sum_{(c_1,\ldots,c_{i-1})\in \sC_p^{i-1}} \IP(E_p^{(i)}(m)\mid \{Y_1=c_1,\ldots, Y_{i-1}=c_{i-1}\}\cap E_p^{(i-1)}(m))\cdot\\
&\qquad\qquad\qquad\qquad\quad \IP(\{Y_1=c_1,\ldots, Y_{i-1}=c_{i-1}\}\cap E_p^{(i-1)}(m)).\\
\end{align*}
Given the event $\{Y_1=c_1,\ldots, Y_{i-1}=c_{i-1}\}\cap E_p^{(i-1)}(m)$, namely, that the faces $c_1,\ldots c_{i-1}$ have been chosen and any $m$ of them have diameter at least $r$, we obtain, letting $N_p(c_1,\ldots,c_{i-1})$ be the number of faces  in $X$ whose union with $m-1$ of the faces  $c_1,\ldots,c_{i-1}$ form a set of diameter at most $r$, 
\[
\IP(E_p^{(i)}(m)\mid \{Y_1=c_1,\ldots, Y_{i-1}=c_{i-1}\}\cap E_p^{(i-1)}(m))=1-{N_p(c_1,\ldots,c_{i-1})\over |\sC_p|}.
\]
Let $N_p$ be the the number of faces in the $r$ neighbourhood of a single chamber of $X$ (this doesn't depend on the given chamber). Then 
\[
N_p(c_1,\ldots,c_{i-1})\leq N_p \cdot |\{(k_1,\ldots, k_{m-1})\mid  1\leq k_1<\cdots< k_{m-1}\leq i-1\}|\leq N_p \cdot (i-1)^{m-1}
\] 
so that
\begin{align*}
\IP(E_p^{(i)}(m))&\geq\sum_{(c_1,\ldots,c_{i-1})\in \sC_p^{i-1}} \left (1-{N_p\cdot (i-1)^{m-1}\over |\sC_p|}\right) \cdot\\
&\qquad\qquad\qquad\qquad\qquad\qquad \IP(\{Y_1=c_1,\ldots, Y_{i-1}=c_{i-1}\}\cap E_p^{(i-1)}(m))\\
&=\left (1-{N_p\cdot (i-1)^{m-1}\over |\sC_p|}\right) \IP(E_p^{(i-1)}(m)). 
\end{align*}
Using $e^{-2x}\leq 1-x$ for all $x< 0.79$, we get
\[
\IP(E_n^{(i)})\geq e^{-2\sum_{j=1}^{i}{N_p\cdot (j-1)^{m-1}\over |\sC_p|}}\geq e^{-2N_p{i(i-1)\over |\sC_p|}}.
\] 
Therefore if $i=|\sC_p|^\delta$, where $\delta<1/2$, then $\IP(E_n)$ is arbitrarily close to 1. This concludes the proof of (1). The proof of (2) is of a similar nature. 
\end{proof}

Then the density model exhibits several extra phase transitions as the  parameter $\delta$ increases. The transitions depend on the local structure of the initial space $X$:

\begin{proposition}\label{d12} Let $(\G,X,\{\G_p\})$ be a deterministic data, and $r\in \IN_{\geq 2}$, $k,\ell\in \IN_{\geq 1}$ be fixed. Then the random space $X_A$ in the density model over $(\G,X,\{\G_p\})$ admits a phase transition at density $\delta={k\over {k+1}}$.  Namely, the following holds with overwhelming probability:
\begin{enumerate} 
\item If $\delta<{\frac {k} {k+1}}$, every ball $B$ of radius $r$ in $X^2$ contains to at most $k$ elements of $A$.
\item If $\delta> {\frac {k-1} {k}}$, there exists an $r$-separated set $E\subset X^{(1)}$ of cardinal at least $\ell$, such that every edge $e\in E$ is included in exactly $\min(k,q_{e}+1)$ elements of $A$.
\end{enumerate}
\end{proposition}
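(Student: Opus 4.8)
The plan is to mimic the box-principle argument used in Proposition \ref{P- r separated}, but now counting \emph{multiplicities} of chambers hitting a fixed ball rather than merely pairwise distances. For part (1), fix a ball $B$ of radius $r$ in $X^{(2)}$; since $\G_p$ acts cocompactly and the deterministic data has a bounded number of $\G$-orbits of chambers, the number $M_p$ of chambers of $X$ meeting $B$ is bounded independently of $p$ and of $B$. The probability that a single uniformly chosen chamber of $\sC_p$ lies in $B$ is thus $O(1/|\sC_p|)$. If $A$ consists of $i = |\sC_p|^\delta$ independent uniform choices, the expected number of $(k+1)$-tuples of elements of $A$ all contained in some common ball of radius $r$ is at most (number of such balls, which is $O(|\sC_p|)$) times $\binom{i}{k+1}$ times $(M_p/|\sC_p|)^{k}$ — one factor of $1/|\sC_p|$ is absorbed by ``choosing the ball''. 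This is $O\!\left(|\sC_p|\cdot |\sC_p|^{(k+1)\delta}\cdot |\sC_p|^{-k}\right) = O\!\left(|\sC_p|^{1 + (k+1)\delta - k}\right)$, which tends to $0$ precisely when $1 + (k+1)\delta - k < 0$, i.e. $\delta < \frac{k}{k+1}$. A union bound (Markov) over this expectation then gives part (1) with overwhelming probability. One must be slightly careful that ``ball of radius $r$ in $X^{(2)}$'' is really indexed by its centre, so the count of balls is $O(|\sC_p|)$; this is where cocompactness of the action enters.

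For part (2), one shows the \emph{opposite} event occurs with overwhelming probability when $\delta > \frac{k-1}{k}$. Here I would first isolate a large $r$-separated family $\cF$ of edges in $X^{(1)}$ — of size proportional to $|\sC_p|$, which exists by cocompactness — and for each edge $e\in \cF$ let $q_e+1$ be the number of chambers of $X$ containing $e$ (the ``local thickness'' at $e$). The goal is that, for at least $\ell$ edges $e$ of $\cF$, \emph{all} $\min(k,q_e+1)$ of a prescribed set of chambers around $e$ get selected. For one fixed edge $e$ and one fixed choice of $\min(k,q_e+1)$ chambers containing it, the probability that all of them appear among the $i=|\sC_p|^\delta$ independent uniform draws is, up to constants, $\left(i/|\sC_p|\right)^{\min(k,q_e+1)} \asymp |\sC_p|^{(\delta-1)\min(k,q_e+1)}$, since each of the $\min(k,q_e+1)$ target chambers is ``missed by all draws'' with probability $(1-1/|\sC_p|)^i \approx e^{-i/|\sC_p|} = e^{-|\sC_p|^{\delta-1}}$, and for $\delta$ in the relevant range $|\sC_p|^{\delta-1}\to 0$, so $1-e^{-|\sC_p|^{\delta-1}} \asymp |\sC_p|^{\delta-1}$. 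The exponent $\min(k,q_e+1)$ is at most $k$, so this probability is at least $c\,|\sC_p|^{(\delta-1)k} = c\,|\sC_p|^{k\delta - k}$, which is $\gg |\sC_p|^{-1}$ exactly when $k\delta - k > -1$, i.e. $\delta > \frac{k-1}{k}$.

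To upgrade ``positive probability for one edge'' to ``at least $\ell$ edges succeed with overwhelming probability'', I would exploit independence: the events ``all target chambers around $e$ are selected'' for distinct edges $e\in\cF$ are not literally independent (draws are shared), but a second-moment / Chebyshev argument works, or more cleanly one can pass to a sub-family of $\cF$ indexing disjoint chamber-sets and to the event that at least one draw in each of $\min(k,q_e+1)$ disjoint blocks of the $i$ draws lands correctly, which genuinely decouples. Since $|\cF|$ grows like $|\sC_p|$ and each individual success probability is $\gtrsim |\sC_p|^{k\delta-k}$, the expected number of successful edges is $\gtrsim |\sC_p|^{1 + k\delta - k} \to \infty$; combined with a variance bound this forces at least $\ell$ successes with overwhelming probability. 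One still has to check that ``$e$ is included in \emph{exactly} $\min(k,q_e+1)$ elements of $A$'' rather than more: when $q_e + 1 \le k$ this is automatic once all chambers around $e$ are chosen; when $q_e+1 > k$ one intersects with the complementary event (from the part-(1)–type estimate at density $\delta$) that not too many chambers around $e$ are selected, which holds with overwhelming probability in the same range.

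\textbf{Main obstacle.} The delicate point is the passage, in part (2), from the single-edge success probability $\asymp |\sC_p|^{(\delta-1)\min(k,q_e+1)}$ to the statement about $\ell$ edges: the naive inclusion of events is not independent because all edges compete for the same pool of $|\sC_p|^\delta$ draws, and the success probability is only polynomially small, so one cannot simply take a union bound over failures. The clean fix is the block decoupling sketched above (reserve disjoint blocks of draws for disjoint edge-neighbourhoods), after which Chebyshev or even a direct Chernoff bound finishes it; getting the constants and the sub-family selection right, while simultaneously controlling the ``exactly'' (as opposed to ``at least'') count, is the part that requires genuine care rather than routine bookkeeping.
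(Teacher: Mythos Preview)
Your approach is essentially the same as the paper's. Both parts use a first-moment computation: a union bound over $(k+1)$-tuples and balls for part~(1), and an expectation count over $k$-tuples of chambers sharing an edge for part~(2).

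Two remarks. First, in part~(1) your bookkeeping slips: having already ``absorbed one factor of $1/|\sC_p|$ by choosing the ball'' you should \emph{not} also multiply by $|\sC_p|$ for the number of balls. The correct bound is $O(i^{k+1}/|\sC_p|^{k}) = O(|\sC_p|^{(k+1)\delta - k})$, and $(k+1)\delta - k < 0$ gives $\delta < k/(k+1)$; your written exponent $1+(k+1)\delta - k$ would actually yield $\delta < (k-1)/(k+1)$, so your final threshold is right only because two errors cancel. The paper does this the straightforward way (sum over balls, probability $(b^*/|\sC_p|)^{k+1}$ per tuple), which you may find cleaner.

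Second, for part~(2) you are more careful than the paper. The paper computes $\IE(Z_{p,k}) \to \infty$ via inclusion--exclusion (your asymptotic $\IP \asymp |\sC_p|^{k(\delta-1)}$ matches theirs) and then passes directly to ``we can find at least $\ell$ edges, $r$-separated, with exactly $\min(k,q_e+1)$ chambers removed'' with only a brief appeal to local boundedness and independence of draws. Your proposed second-moment/Chebyshev argument (or the block-decoupling variant) is exactly what is needed to make that passage rigorous, and your handling of the ``exactly'' versus ``at least'' count is a genuine point the paper leaves implicit. So your plan for~(2) is not just correct but fills in what the paper sketches.
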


\begin{proof}
(1) Let $Y_1,Y_2,Y_3,\ldots$ be a sequence of independent uniformly distributed random variables with values in $\sC_p$. For a ball $B\subset X^{(2)}$ of radius $r$, we write $Y_i\sim_B Y_j$ to mean that both $Y_i$ and $Y_j$ are included in  $B$.

Let $B\subset X^{(2)}$ be a ball of radius $r$ and fix $1\leq m_1<\cdots< m_{k}\leq i$. Since
\[
\IP(Y_{m_1}\sim_BY_{m_2}\sim_B\cdots \sim_B Y_{m_k})\leq \left( \frac{b^*}{|\sC_p|}\right)^k
\]
where $b^*:=\max_{B\subset X^{(2)}} |B_r|$ (which is finite as $X/\G$ is compact), we obtain
\begin{align*}
&\IP(\exists~~ 1\leq m_1<\cdots< m_{k}\leq i, ~~ \exists~~ B, ~~Y_{m_1}\sim_BY_{m_2}\sim_B\cdots \sim_B Y_{m_k})\\
&\hspace{1cm}\leq \sum_{1\leq m_1<\cdots< m_{k}\leq i} \sum_{f\in \sC_p}~~ \IP(Y_{m_1}\sim_{B_f}Y_{m_2}\sim_{B_f}\cdots \sim_{B_f} Y_{m_k})\\ 
&\hspace{1cm}\leq |\{(m_1,\ldots, m_{k})\mid  1\leq m_1<\cdots< m_{k}\leq i\}|\cdot  |\sC_p|\cdot \left( \frac{b^*}{|\sC_p|}\right)^k\\
&\hspace{1cm}\leq O\left( \frac{i^k}{|\sC_p|^{k-1}}\right)
\end{align*}
Therefore if $i=|\sC_p|^\delta$, where $\delta<k/(k-1)$, then with overwhelming probability every ball in $X^{(2)}$ of radius at most $r$ contains at most $k$ elements of the random subset $A$ of $\sC_p$, at density $\delta$.

(2) Let $Y_1,Y_2,Y_3,\ldots$ be a sequence of independent uniformly distributed random variables with values in $\sC_p$. Let $\sF_{p,k}$ be the set of all $k$-tuples of elements of $\sC_p$ whose faces are mutually adjacent to a same edge. For every $\psi\in \sF_{p,k}$ we write $E_{i,k}(\psi)$ for the event
\[
E_{i,k}(\psi) = \{ \exists~~ 1\leq m_1,\cdots, m_{k}\leq i ~(\forall r\neq s, ~m_r\neq m_s)\mid Y_{m_1}=\psi_{1},\ldots, Y_{m_k}=\psi_{k}\}
\]
and let $\chi_{i,k}(\psi)$ be the characteristic function of $E_{i,k}(\psi)$. We will estimate the expectation  of the random variable 
\[
Z_{p,k}:= \sum_{\psi\in \sF_{p,k}} \chi_{|\sC_p|^\delta,k} (\psi)
\]
whenever $\delta\in \left [{k-1\over k},{k\over k+1}\right]$. 
Fix $k$ and $\psi \in \sF_{p,k}$. Write 
\begin{align*}
\IP(E_{i,k}(\psi))=\IP\bigg( \bigcup_{m_1,\ldots,m_k=1\mid \forall r\neq s, m_r\neq m_s}^i Y_{m_1}=\psi_{1},\ldots, Y_{m_k}=\psi_{k}\bigg ).
\end{align*}
We have
\begin{align*}
\IP(E_{i,k}(\psi))&\geq\sum_{m_1,\ldots,m_k=1\mid \forall r\neq s, m_r\neq m_s}^i \IP(Y_{m_1}=\psi_{1},\ldots, Y_{m_k}=\psi_{k})\\
&\hspace{.5cm} - \sum_{m_1,\ldots,m_k=1\mid \forall r\neq s, m_r\neq m_s}^i\sum_{m_1',\ldots,m_k'=1\mid \forall r\neq s, m_r'\neq m_s'}^i\\
&\hspace{2.5cm} \IP(Y_{m_1}=\psi_{1},\ldots, Y_{m_k}=\psi_{k},Y_{m_1'}=\psi_{1},\ldots, Y_{m_k'}=\psi_{k})
\end{align*}
by inclusion-exclusion, where 
\[
\IP(Y_{m_1}=\psi_{1},\ldots, Y_{m_k}=\psi_{k})={1\over |\sC_p|^k}
\]
and 
\[
\IP(Y_{m_1}=\psi_{1},\ldots, Y_{m_k}=\psi_{k},Y_{m_1'}=\psi_{1},\ldots, Y_{m_k'}=\psi_{k})\leq {1\over |\sC_p|^{2k}}.
\]
Therefore,
\begin{align*}
\IP(E_{i,k}(\psi))&\geq  {i(i-1)\cdots(i-k-1)\over |\sC_p|^k}-{(i(i-1)\cdots(i-k-1))^2\over |\sC_p|^{2k}}.
\end{align*}
Substituting $|\sC_p|^\delta$ for $i$ we deduce
\begin{align*}
\IE(Z_{p,k})&= \sum_{\psi\in \sF_{p,k}}\IE(\chi_{|\sC_p|^\delta,k} (\psi))\\
&\geq |\sF_{p,k}| \bigg ( {|\sC_p|^\delta(|\sC_p|^\delta-1)\cdots(|\sC_p|^\delta-k-1)\over |\sC_p|^k}\\
&\hspace{3cm}-{{|\sC_p|^{2d}(|\sC_p|^\delta-1)^2\cdots(|\sC_p|^\delta-k-1)^2\over |\sC_p|^{2k}}}\bigg)
\end{align*}
Since for $p$ large enough,  
\[
{|\sC_p|^\delta-j\over |\sC_p|}<{1\over 2}
\]
whenever $\delta<1$ and $j\leq k+1$ we have
\[
{{|\sC_p|^{2d}(|\sC_p|^\delta-1)^2\cdots(|\sC_p|^\delta-k-1)^2\over |\sC_p|^{2k}}}<{|\sC_p|^\delta(|\sC_p|^\delta-1)\cdots(|\sC_p|^\delta-k-1)\over  2^k |\sC_p|^{k}}
\]
and therefore,
\begin{align*}
\IE(Z_{p,k})&\geq |\sF_{p,k}|\bigg (1-{1\over 2^k}\bigg) \bigg ( {|\sC_p|^\delta(|\sC_p|^\delta-1)\cdots(|\sC_p|^\delta-k-1)\over |\sC_p|^k}\bigg)\\
&\geq |\sF_{p,k}|\bigg (1-{1\over 2^k}\bigg) \bigg ({ (|\sC_p|^\delta-k-1)^k\over |\sC_p|^k}\bigg)
\end{align*}
Now, 
\[
|\sF_{p,k}| \geq {3|\sC_p|\over (q^*+1)}
\] 
so  
\begin{align*}
\IE(Z_{p,k})\geq {3\over (q^*+1)} \bigg(1-{1\over 2^k}\bigg) \bigg ({ (|\sC_p|^\delta-k-1)^k\over |\sC_p|^{k-1}}\bigg).
\end{align*}
showing that \[
\IE(Z_{p,k})\to\infty
\] 
whenever $\delta>{k-1\over k}$.  Since is $X$ uniformly locally bounded, this shows that we can find at least $\ell$ edges $e_1,\ldots, e_\ell$ in $X$ (for any $\ell$ fixed in advance), having at least $k$ chambers adjacent to $e_j$ removed.  In addition, since the number of such edges grows to infinity, we may suppose that the pairwise distance between these edges is at least $r$ from each other in $X$ (for any $r$ fixed in advance),  and  since the chambers are chosen independently at random, we may also suppose that exactly $\min(k,q_{e_j}+1)$ distinct chambers will be removed on $e_1,\ldots, e_\ell$.  
\end{proof}

\begin{proposition}\label{p-d1} Let $(\G,X,\{\G_p\})$ be a deterministic data and let $\ell\in \IN_{\geq 2}$. Then the random group $\G_A$ in the density model over $(X,\G,\{\G_p\})$ satisfies the following with overwhelming probability:
\begin{enumerate} 
\item If $\delta<\frac{q_*(X)}{q_*(X)+1}$, then $\G_A$ acts freely on a simply connected space $X$ without boundary with $X/\G_A$ compact.
\item If $\delta> \frac{q_*(X)}{q_*(X)+1}$, then $\G_A$ splits off a free factor isomorphic to a free group $F_\ell$ on $\ell$ generators:
\[
\G_A\simeq\Delta*F_\ell
\] 
where $\Delta$ is a finitely presented group, and $q_*(X):=\min_{e\in X^{(1)}} q_e$.
\end{enumerate}
In particular,
\[
\delta_{FA}((\G,X,\{\G_p\}))\leq  \frac{q_*(X)}{q_*(X)+1}<1
\] 
where $\delta_{FA}$ is the critical density for Serre's property FA (existence of fixed point for actions on trees). 
\end{proposition}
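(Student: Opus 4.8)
\emph{Part (1): the automatic part.} Much of (1) holds for \emph{every} $A$. Since $\G_p$ is a subgroup of $\G$ it acts freely on the invariant subcomplex $X_A$; the deck group $K_A$ acts freely on the universal cover $\tilde X_A$; and the exact sequence $1\to K_A\to\G_A\to\G_p\to 1$ of Section~\ref{S - preliminary results} then forces $\G_A$ to act freely on the simply connected complex $\tilde X_A$, with compact quotient $\tilde X_A/\G_A=X_A/\G_p$ (a subcomplex of the compact complex $X/\G_p$). Since $|A|=|\sC_p|^\delta=o(|\sC_p|)$, the complex $\tilde X_A$ stays $2$-dimensional with overwhelming probability. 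As $\tilde X_A\to X_A$ is a local homeomorphism, ``$\tilde X_A$ without boundary'' means the same for $X_A$; and because deleting open $2$-cells never disturbs the $1$-skeleton, $X_A$ fails to be $2$-dimensional without boundary exactly when some edge ends up lying in no $2$-cell. So the plan for (1) is a first-moment bound on the probability that $A$ contains all equivariant chambers adjacent to some $\G_p$-orbit of edges.

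\emph{The estimate for (1).} Fix a $\G_p$-orbit $e$ of edges; for $p$ large it is adjacent to exactly $q_e+1$ equivariant chambers with $q_e\ge q_*(X)$, and there are $O(|\sC_p|)$ such orbits, falling into finitely many ``thickness classes'' because $X/\G$ is compact. Writing $m:=|\sC_p|^\delta$, the probability that a prescribed $j$-element subset of $\sC_p$ is entirely hit by the $m$ independent uniform draws is at most $(m/|\sC_p|)^j$, whence
\[
\IP\big(\exists\text{ an edge of }X_A\text{ lying in no }2\text{-cell}\big)\ \le\ \sum_{t\ge q_*}O(|\sC_p|)\Big(\tfrac{m}{|\sC_p|}\Big)^{t+1}\ =\ O\!\Big(\tfrac{m^{q_*+1}}{|\sC_p|^{q_*}}\Big)\ =\ O\big(|\sC_p|^{\delta(q_*+1)-q_*}\big),
\]
the dominant term being $t=q_*$ since $m<|\sC_p|$. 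This tends to $0$ precisely when $\delta<q_*/(q_*+1)$, which proves (1).

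\emph{Part (2).} Suppose $\delta>q_*/(q_*+1)$. The plan is to produce many disjoint free edges in the quotient and read off the splitting. First I would re-run the second-moment computation of Proposition~\ref{d12}(2) with $k=q_*+1$, but with the counting family $\sF_{p,k}$ narrowed to $k$-tuples of chambers adjacent to an edge of \emph{minimal} thickness $q_*$; such edges still number $\gtrsim|\sC_p|$ (they form a union of $\G$-orbits), so the expectation still diverges, and one gets, with overwhelming probability, an $r$-separated family $e_1,\dots,e_\ell$ of thickness-$q_*$ edges of $X$, each with \emph{all} of its $q_*+1$ adjacent chambers removed, where $r\ge 3$ is fixed in advance. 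For $p$ large these descend to $\ell$ pairwise non-adjacent edges $\bar e_1,\dots,\bar e_\ell$ of the finite complex $X_A/\G_p$ lying in no $2$-cell, and $\G_A=\pi_1(X_A/\G_p)$ because $\tilde X_A$ is also its universal cover. Each $\bar e_j$, while in no $2$-cell, still lies on a $1$-cycle (the boundary of one of the now-removed chambers at $e_j$), hence is not a bridge; as the $\bar e_j$ are pairwise far apart, deleting the open edges $\bar e_1,\dots,\bar e_\ell$ leaves a connected finite subcomplex $Z'$, and attaching the $\ell$ arcs back yields $X_A/\G_p\simeq Z'\vee\bigvee_{j=1}^\ell S^1$. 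Hence $\G_A\cong\Delta*F_\ell$ with $\Delta:=\pi_1(Z')$ finitely presented. In particular $\G_A$ is a nontrivial free product, so it acts on the associated Bass--Serre tree with no global fixed point and lacks property~FA; since this holds with overwhelming probability for every $\delta>q_*/(q_*+1)$, the critical density satisfies $\delta_{FA}((\G,X,\{\G_p\}))\le q_*(X)/(q_*(X)+1)$, which is $<1$ since $q_*(X)<\infty$ ($X$ locally finite, $X/\G$ compact).

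\emph{Main obstacle.} The delicate part is the tail of (2): one must both sharpen Proposition~\ref{d12}(2) so the ``emptied'' edges can be chosen of minimal thickness \emph{and} pairwise far apart, and — more importantly — promote ``$\ell$ disjoint edges in no $2$-cell'' to an honest free factor of $\G_A$. The second point rests on the elementary observation that an emptied edge still bounds a (now void) $2$-cell, so it is never a bridge of the $1$-skeleton and deleting it keeps the quotient connected; granting this, Part (1) and the property~FA corollary are routine.
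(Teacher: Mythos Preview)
Your proof is correct and follows the same overall strategy as the paper: for (1), a direct first-moment bound that is equivalent to invoking Proposition~\ref{d12}(1) with $k=q_*$; for (2), restricting the count in Proposition~\ref{d12}(2) to $k$-tuples of chambers adjacent to edges of \emph{minimal} thickness $q_*$ (so that ``all $k$ hit'' becomes ``edge completely emptied''), obtaining $\ell$ far-apart emptied edges, and reading off the free factor via van~Kampen.

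The only real difference is the last topological step in (2). The paper collapses a maximal tree $T_p$ in the $1$-skeleton of $V_p=X/\G_p$, so that the emptied edges become loops at the unique vertex and $V_A'$ splits as a wedge $V_A''\vee B_\ell$. You instead observe that each emptied edge $\bar e_j$ still lies on the boundary cycle of one of the chambers just removed and is therefore not a bridge, so deleting the $\bar e_j$'s keeps the quotient connected and reattaching them gives $Z'\vee\bigvee_\ell S^1$. This is a clean alternative, and in fact your ``not a bridge'' observation is exactly what justifies (implicitly, in the paper) that the maximal tree $T_p$ can be chosen to avoid the emptied edges --- a point the paper does not spell out. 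Both versions tacitly use that for large $p$ a chamber's boundary embeds in $X/\G_p$, which is routine in the intended settings.
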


\begin{proof} The first assertion follows directly from Lemma \ref{d12} by contracting the free faces, so we prove (2). 
Since $X/\G$ is compact, there exists a constant $\gamma>0$ such that, if $q_*(X)$ denotes the lowest order of an edge in $X$, then 
\[
E_p:=\{e\in \sC_p^{(1)}\mid q_e=q_*(X)\}
\]
satisfies
\[
|E_p|\geq \gamma |\sC_p^{(1)}|
\]
for every $p>0$.  Let $k=q_*(X)$ and let $\tilde \sF_{p,k}$ be the set of all $k$-tuples of elements of $\sC_p$ whose faces are mutually adjacent to a same edge $e\in E_p$. The proof of Lemma \ref{d12}, with the same notation except for $\tilde \sF_{p,k}$ replacing $\sF_{p,k}$, shows that 
\begin{align*}
\IE(Z_{p,k})&\geq |\tilde \sF_{p,k}|\bigg (1-{1\over 2^k}\bigg) \bigg ({ (|\sC_p|^\delta-k-1)^k\over |\sC_p|^k}\bigg)
\end{align*}
and since now
\[
|\tilde \sF_{p,k}| \geq {3\gamma|\sC_p|\over (q^*+1)},
\] 
we obtain $\IE(Z_{p,k})\to\infty$. Therefore 
there exists, with overwhelming probability, an $r$-separated set $E\subset E_p$ of cardinal $\ell$, such that every edge $e\in E$ is included in exactly $q_{*}(X)+1$ elements of $A\subset \sC_p$. Let $V_{p}$ be the quotient space $X/{\G_p}$, and let $T_p$ be a maximal tree of the 1-skeleton of $V_p$, and choose a root $s_p\in T_p$. Let $V_p'$ be the topological space obtained from $V_p$ by collapsing $T_p$ to $s_p$, which is homotopy equivalent to $V_p$. 
The elements of $E/\G$, being face free edges in 
\[
V_A:=X_A/\G_p\subset V_p,
\] 
form a wedge sum $B_\ell$ of $\ell$ edges in the collapse $V_A'$ of $V_A$ in $V_p'$, and $V_A'$ is the wedge sum of $B_\ell$ and a compact complex $V_A''$. The van Kampen theorem shows that $\G_A\simeq\pi_1(V_A')$ splits as 
\[
\pi_1(V_A')=\pi_1(V_A'')*\pi_1(B_\ell)
\] 
where $\pi_1(B_\ell)\simeq F_\ell$ and $\pi_1(V_{A}'')$ is finitely presented.

In particular we have
\[
\delta_{\neg FA}((\G,X,\{\G_p\}))\leq  \frac{q_*(X)}{q_*(X)+1}
\] 
proving the proposition.
\end{proof}

\section{Random periodic flat plane problems II} \label{sect:density}

We now prove the implication
\[
\delta < \frac 5 8 \impl \ZI^2\inj \G
\]
in Theorem \ref{intrdensity} and similar assertions over local rings.  

In Theorem \ref{intrdensity} the fixed global field (denoted $k$) is $\FI_q(y)$ and the uniform lattice $\G$ in the deterministic data $(X,\G,\{\G_p\})$  is the Cartwright--Steger lattice \cite{CS} which is of type $\tilde A_2$ over $\FI_q((y))$. The presentation of $\G$ by Lubotzky, Samuels and Vishne in \cite{LSV} is especially useful, where the congruence subgroups $\{\G_p\}$  are described explicitly to illustrate the Ramanujan property of their quotients. We first review briefly the construction for notational purposes, referring to \cite{LSV} for complete details. 

 Let $D$ be the central simple algebra of degree 3 over $k$ defined by
\[
D=\bigoplus_{i,j=0}^2 k\xi_iz^j
\]  
with relations $z\xi_i=\phi(\xi_i)z$ and $z^3=1+y$, where $\phi$ is a generator of $\Gal(\FI_{q^3}/\FI_{q})$ and $\xi_i=\phi^i(\xi_0)$ is a basis for $\FI_{q^3}$ over $\FI_q$. Associated with $D$ are algebraic groups over $k$ defined by $\tilde \Gbf=D^\times$ and $\Gbf=D^\times/k^\times$. 
For a valuation $\nu$ on $k$ we let $D_\nu=D\otimes_k k_\nu$ and say that $D_\nu$  splits  whenever $D_\nu\simeq \mathrm{M}_3(k_\nu)$.

Let $T=\{\nu_{1/y}, \nu_{1+y}\}$ consisting of the degree valuation $\nu_{1/y}$ on $k$, and the valuation $\nu_{1+y}$ associated with the prime $1+y$, namely, $\nu_{1+y}((1+y)^if/g)=i$ where the polynomials $f,g$ are prime to $(1+y)$. Then by \cite[Prop. 3.1]{LSV} the algebra $D_\nu$ splits for all valuations $\nu\notin T$ on $k$, while it is a division algebra for $\nu\in T$. 

Let $\nu_0=\nu_y\notin T$ and $R_0=\{x\in k\mid \nu(x)\geq 0,\ \forall \nu\neq \nu_0\}=\FI_q[1/y]$. Since $D_{\nu_0}$ splits, we have $\Gbf(k_{\nu_0})\simeq \PGL_3(\FI_q((y)))$, so $\Gbf(R_0)$ embeds as a discrete subgroup of $\PGL_3(\FI_q((y)))$. Since $T\neq \emptyset$ and $\Gbf(k_{\nu})$ is compact for $\nu\in T$, then Strong Approximation  (cf.\ \cite[Section 4]{LSV}) shows that $\Gbf(R_0)$ is a cocompact lattice in $\PGL_3(\FI_q((y)))$. 

The Cartwright--Steger lattice \cite[Section 2]{CS} is the subgroup $\G$ of $\Gbf(R_0)$ defined as follows. We note that, as stated above, the lattice $\Gbf(R_0)$ is well defined only up to commensurability. A strict definition of $\Gbf(R_0)$ depends upon fixing a embedding $\Gbf$ into a linear group over $k$, which is chosen here to be $\GL_9(k)$  (see \cite[Prop. 3.3]{LSV}), so that $\Gbf(R_0)\egdef \Gbf(k)\cap \mathrm{M}_9(R_0)$.
Then $\G$ consists of matrices of $\Gbf(R_0)$ whose reduction modulo $1/y$ are upper triangular with $3\times 3$ identity blocks on the diagonal.

Another description of $\G$ from \cite[Section 4]{LSV} is as follows. Let $R$ be the subring of $k$ given by $R=\FI_q[y,1/y,1/(1+y)]$, and $A(R)$ be the $R$-algebra $A(R)=\bigoplus_{i,j=0}^2 R\xi_iz^j$ having the same defining relations as $D$, so that $D$ appears as the algebra of central fractions of $A(R)$, namely, $D=(R\backslash \{0\})^{-1}A(R)$.
We have $A^\times(R)/R^\times=\Gbf(R_0)$ (\cite[Prop. 4.9]{LSV}). Let 
\[
b=1-z^{-1}\in A^\times(R)
\]
and for $u\in \FI_{q^3}^\times\subset A^\times(R)$, set $b_u=ubu^{-1}\in A^\times(R)$. Let $\tilde \G$ be the subgroup of $A^\times(R)$ generated by the elements $b_u$. Then $\G$ is the quotient of $\tilde \G$ modulo $R^\times$. 

Write $X$ for the Bruhat--Tits building associated to $\PGL_3(\FI_q((y)))$. The vertices of $X$ are $\FI_q[[y]]$-lattices in $\FI_q((y))^3$ and incidence is given by flags. 
The action of $\Gbf(R_0)$ on $X$ (via its embedding in $\PGL_3(\FI_q((y)))$) is transitive on vertices.
The group $\G$ is a normal subgroup of $\Gbf(R_0)$ of finite index (see \cite[Theorem 2.6]{CS}) which also acts transitively on the vertices of $X$ (in fact, the reduced norm of $b$ is $y/(1+y)$, which coincide with  $y$ up to the invertible element $1+y$ of $\FI_q[[y]]$).
The congruence subgroups of $\G$ are defined as follows. Let $I$ be an ideal in $R$. The quotient map $R\to R/I$ induces an epimorphism $A(R)\to A(R/I)$, which itself induces a group morphism $A^\times (R)\to A^\times (R/I)$. Denote the kernel by $A^\times (R,I)$ and its quotient  modulo $R^\times$ by $(A^\times/R^\times) (R,I)$. Then we set $\G(I)=\G\cap  (A^\times/R^\times) (R,I)$, and let $\G_{I}=\G/\G(I)$ be the quotient group.

Let $f_p\in\FI_q[y]$ be a monic irreducible polynomial prime to $y$ and $y+1$,   $s_p\geq 1$ be an integer, and  $I_p=\langle f_p^{s_p}\rangle$. The deterministic data for Theorem \ref{intrdensity} is $(X, \G, \{\G(I_p)\})$ where:
\begin{itemize}
\item $X$ is the Bruhat-Tits building of $\PGL_3(\FI_q((y)))$
\item $\G$ is the Cartwright-Steger lattice in $\PGL_3(\FI_q((y)))$
\item $\{\G(I_p)\}$ are the arithmetic lattices associated with $I_p$. 
\end{itemize}
We suppose $\deg(f_p^{s_p})\to \infty$.

\newcommand{\M}{\mathrm{M}}

The assertion that $\ZI^2\inj \G$ for these random groups is related to the  elementary algebraic structure of $\PGL_3$ and $\PSL_3$ over fields and local rings.  As usual for a local ring $L$, $\GL_3(L)$ is the group of units of $\M_3(L)$, $\SL_3(L)$ is the subgroup of $\GL_3(L)$ of matrices with determinant 1, and $\PGL_3(L)$, $\PSL_3(L)$ are the projective versions.

We start with a lemma. Let $(L,m)$ be a finite local principal ring of prime characteristic with residue field $\FI_q=L/m$, uniformizer  $\pi$ and  length $s$ (the smallest integer $s>0$ such that $m^s=0$).

\begin{lemma}\label{lemdens1} 
The order of the group $\langle \gamma,\gamma'\rangle$ generated by two commuting elements  $\gamma,\gamma'\in \PGL_3(L)$ is at most
\[
3{q^{2\lceil\log_qs\rceil}(q^3-1)}
\] 
where $\log_q$ is the logarithm relative to $q$, and $\lceil\cdot\rceil$ is the upper integer value. 
\end{lemma}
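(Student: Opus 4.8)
The plan is to bound the order of a finite abelian subgroup of $\PGL_3(L)$ generated by two commuting elements by lifting to $\GL_3(L)$ and controlling the structure of a commuting pair of matrices over the finite local ring $L$. First I would lift $\gamma,\gamma'$ to $g,g'\in\GL_3(L)$; these need not commute, but $[g,g']$ lies in the central subgroup $L^\times\cdot\Id$, so after replacing $g,g'$ by suitable scalar multiples (using that $L^\times$ is abelian) one reduces to understanding a genuinely commuting pair in a slightly larger group, or more simply one passes to the subalgebra $L[g,g']\subseteq\M_3(L)$ generated by the two lifts together with scalars. The key structural point is that a \emph{commutative} subalgebra of $\M_3(L)$ containing the identity, over a local ring with residue field $\FI_q$, has bounded $L$-length: by the Cayley–Hamilton-type argument over the residue field $\FI_3(L/m)$ a commutative subalgebra of $\M_3(\FI_q)$ has dimension at most $3$ over $\FI_q$ (the classical bound $\lfloor n^2/4\rfloor+1 = 3$ for $n=3$), and lifting this through the filtration $m\supset m^2\supset\cdots\supset m^s=0$ — each quotient $m^i/m^{i+1}$ being an $\FI_q$-vector space — shows the commutative $L$-subalgebra has at most $q^{3s}$ elements; its group of units is then comfortably controlled.

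That crude count gives a bound polynomial in $q^s$, but the statement asks for the much sharper $3q^{2\lceil\log_q s\rceil}(q^3-1)$, which is \emph{polylogarithmic} in $s$. So the real work is a finer analysis. The idea is to separate the ``semisimple'' and ``unipotent/nilpotent'' parts. Over the residue field, the reductions $\bar\gamma,\bar\gamma'$ are a commuting pair in $\PGL_3(\FI_q)$; a maximal torus of $\PGL_3(\FI_q)$ has order dividing $(q^3-1)$ (the non-split Coxeter torus) or $(q-1)^2$, $(q^2-1)(q-1)$ otherwise, all bounded by $q^3-1$ up to the factor $3$ coming from the centre of $\SL_3$. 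This accounts for the $3(q^3-1)$ factor. What remains is the kernel of reduction mod $m$: an element of $\PGL_3(L)$ reducing to the identity is represented by $\Id + N$ with $N\in\M_3(m)$, and the order of such an element is a power of $p=\character\FI_q$ controlled by when $(\Id+N)^{p^j}\equiv 1$, i.e. by nilpotency: since $N^3\in\M_3(m^{\lceil\cdot\rceil})$... more precisely $(\Id+N)^{q^j} = \Id + N^{q^j} + \cdots$ and using $N\in\M_3(m)$ together with $N$ being (up to scalars) nilpotent of degree $\le 3$ over each graded piece, one gets that the order of the pro-$p$ part is at most $q^{2\lceil\log_q s\rceil}$ — the exponent $2$ reflecting that a $3\times3$ nilpotent satisfies $N^3=0$ so roughly $\log_q s$ ``doublings'' (Frobenius steps) suffice to kill $m$-adic powers, and $2\lceil\log_q s\rceil$ is the resulting bound on the $q$-adic size of the unipotent congruence subgroup intersected with a $2$-generated abelian group.

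The steps, in order, would be: (1) lift to $\GL_3(L)$ and reduce the commutator into scalars, replacing $\langle\gamma,\gamma'\rangle$ by a commuting pair modulo controlled scalar ambiguity; (2) use the exact sequence $1\to K\to \langle\gamma,\gamma'\rangle \to \langle\bar\gamma,\bar\gamma'\rangle\to 1$ where $K$ is the intersection with the first congruence subgroup $\ker(\PGL_3(L)\to\PGL_3(\FI_q))$; (3) bound $|\langle\bar\gamma,\bar\gamma'\rangle|$ by $3(q^3-1)$ using the classification of abelian subgroups of $\PGL_3(\FI_q)$ (maximal tori plus the centre factor of $\SL_3$, for which $q\ge$ whatever is harmless); (4) bound $|K|$ by $q^{2\lceil\log_q s\rceil}$ via the $m$-adic filtration: writing elements as $\Id+N$, $N\in\M_3(m^i)\setminus\M_3(m^{i+1})$, show that raising to the $q$-th power at least doubles the $m$-adic order (since $N^3=0$ forces the binomial expansion $(\Id+N)^q=\Id+qN+\binom{q}{2}N^2$ and in characteristic $p$ with $3<q$ both $q$ and $\binom q2$ vanish mod $p$... here one must be slightly careful when $q$ is a power of $2$ or $3$, handling $N^2$, $N^3=0$ directly), so after $\lceil\log_q s\rceil$ steps one reaches $m^s=0$, and each step costs a factor $q^2$ in size because the kernel of ``$q$-th power'' on the relevant graded piece $\M_3(m^i/m^{i+1})$ restricted to a rank-$\le2$ commutative family has size $\le q^2$; (5) multiply the bounds. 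I expect step (4) — pinning down exactly why the exponent is $2\lceil\log_q s\rceil$ rather than something like $3\lceil\log_q s\rceil$, and correctly handling small residue characteristic where $\binom{q}{2}$ or $q$ itself interacts with the nilpotent of degree $3$ — to be the main obstacle; steps (1)–(3) are essentially standard.
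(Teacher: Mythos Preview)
Your skeleton (the exact sequence $1\to K\to\langle\gamma,\gamma'\rangle\to\langle\bar\gamma,\bar\gamma'\rangle\to1$, bound the two pieces separately) is exactly what the paper does. The execution, however, has two genuine gaps.

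\textbf{Step (4) is wrong as written, and the correct argument is much simpler.} For $N\in\M_3(m)$ there is no reason whatsoever that $N^3=0$: the entries lie in $m$, but $N$ is an arbitrary matrix, not a nilpotent one. What you want is the bare Frobenius identity: since $L$ has prime characteristic $p$ and $q=p^f$, in \emph{any} ring of characteristic $p$ (commutative or not, with a single variable) one has $(1+N)^{q^j}=1+N^{q^j}$. Now $N\in\M_3(m)$ gives $N^{q^j}\in\M_3(m^{q^j})$, which vanishes once $q^j\ge s$, i.e.\ $j=\lceil\log_q s\rceil$. So every element of $K$ has order dividing $q^{\lceil\log_q s\rceil}$. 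The exponent $2$ then comes \emph{only} from the fact that $K$ is a subgroup of a $2$-generated abelian group, hence $|K|\le (q^{\lceil\log_q s\rceil})^2$; matrix nilpotency plays no role. Your attempted argument via $N^3=0$ would, if it worked, kill everything in one step for $q>3$ and leave no room for the $\lceil\log_q s\rceil$ iterations you invoke two lines later, so the plan is internally inconsistent as well as based on a false premise.

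\textbf{Step (1) does not work, and this matters for step (3).} If $gg'=\lambda g'g$ and you replace $g,g'$ by $\mu g,\nu g'$, you still get $(\mu g)(\nu g')=\lambda(\nu g')(\mu g)$: scalar multiplication does not alter the commutation defect. The paper keeps $\lambda$ and observes (by taking determinants in $\underline\gamma\,\underline\gamma'=\lambda\underline\gamma'\underline\gamma$) that $\lambda^3=1$; this is where the factor $3$ in the final bound comes from. Consequently your ``maximal tori'' argument for step (3) is not enough: the lifted pair in $\GL_3(\FI_q)$ need not commute, so it need not lie in any torus. The paper instead does a short direct case analysis on the characteristic polynomial of $\underline\gamma$: when it is irreducible, $\underline\gamma$ has a cyclic vector $\xi$, one writes $\underline\gamma'\xi=P(\underline\gamma)\xi$ and checks that in the basis $(\xi,\underline\gamma\xi,\underline\gamma^2\xi)$ one has $\underline\gamma'=P(\underline\gamma)\cdot\mathrm{diag}(1,\lambda^{-1},\lambda^{-2})$, so $\langle\underline\gamma,\underline\gamma'\rangle$ sits inside a set of size $3(q^3-1)$; the reducible cases are handled similarly and give the smaller bound $q^3-1$.
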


\begin{proof} 
In the reduction modulo $m$
\[
\PGL_3(L)\ni \gamma\mapsto \bar \gamma\in \PGL_3(\FI_q),
\] 
an element $\gamma\in \PGL_3(L)$ such that ${\bar \gamma}=1$ has a representative of the form 
\[
\lambda(1+\pi^k \gamma_0)\in \GL_3(L)
\] 
for some $\lambda\in L^\times$, $\gamma_0\in \M_3(L)$ and  $k\geq 1$. The Frobenius map gives
\[
(1+\pi^k\gamma_0)^{q^{\lceil\log_qs\rceil}}=1
\]
therefore $\gamma^{q^{\lceil\log_qs\rceil}}=1$. 

Thus, if $\gamma$ and $\gamma'$ are two commuting elements in $\PGL_3(L)$, then the kernel $K$ in the exact sequence
\[
1\to K\to  \langle \gamma,\gamma'\rangle\to \langle \bar \gamma,\bar \gamma'\rangle\to 1
\]
is an abelian group of order at most $q^{2\lceil\log_qs\rceil}$.

We now estimate $|\langle \bar \gamma,\bar \gamma'\rangle|$. Assume without loss of generality  that $L=\FI_q$.  Let $\underline{\gamma},\underline{\gamma}'$ be representatives of $\gamma,\gamma'$ in $\GL_3(L)$, and   
let $f\in L[X]$ be the characteristic  polynomial of $\underline\gamma$. 

\newcommand{\ord}{\mathrm{ord}}

Assume that $f$ doesn't have a root in $L$. Then $f$ is irreducible over $L$ (since a factorization would split off a degree one factor) and therefore 
equal to its minimal polynomial. It follows that the rational canonical form of $\underline \gamma$ has only one block associated with $f$, and $\underline \gamma$ has a cyclic vector $\xi$. Given $\gamma'\in \PGL_3(L)$,  there exists $P\in L[X]$ such that 
\[
\underline \gamma'\xi=P(\underline \gamma)\xi.
\] 
If  
\[
\underline \gamma\,\underline \gamma'=\lambda\underline  \gamma'\underline \gamma
\]
 for some  $\lambda\in L^\times$, then
  \[
  \underline \gamma^k\underline \gamma'=\lambda^k\underline  \gamma'\underline \gamma^k
  \] 
  so
\[
\underline \gamma'\underline\gamma^k\xi=\lambda^{-k}\underline\gamma^kP(\underline\gamma)\xi=\lambda^{-k}P(\underline\gamma)\underline\gamma^k\xi.
\]
Thus, 
\[
\underline\gamma'=P(\underline\gamma)\diag{1}{\lambda^{-1}}{\lambda^{-2}}
\]
in the basis $(\xi,\underline \gamma\xi,\underline \gamma^2\xi)$. Taking determinants the equality $\underline \gamma\,\underline \gamma'=\lambda\underline  \gamma'\underline \gamma$ shows that $\lambda$ is a cube root of 1. Thus every element of $\langle \underline \gamma,\underline \gamma'\rangle$ belong to the subspace 
\[
\left\{P(\underline\gamma)\mid P\in L[X]\right\}
\]
 of $M_3(L)$, possibly after  multiplying by an element of the form 
 \[
 \diag{1}{\lambda}{\lambda^2}
 \] 
 $\lambda\in L^\times$ a cube root of 1. Since the dimension of $\left\{P(\underline\gamma)\mid P\in L[X]\right\}$ over $L$ is 3, the order of $\langle \underline \gamma,\underline \gamma'\rangle$ is at most $3(q^3-1)$. 

A similar argument works more generally if $f$ equals the minimal polynomial. Otherwise, $f$ has a root $a$ in $L$ and  we may assume up to conjugacy that $\underline \gamma$ has the form 
$\smatr{a}{0}{0}{\alpha}$
where $a\in L^{\times}$ and $\alpha\in \GL_2(L)$. Write 
\[
\underline\gamma'=\smatr{b}{m}{n}{\beta},
\] 
$b\in L^{\times}$, $m^t,n\in L^2$, and $\beta\in \GL_2(L)$. Since 
\[
\underline\gamma\underline\gamma'=\lambda\underline\gamma'\underline\gamma
\]
 for some $\lambda\in L^\times$, we have  $ab=\lambda ba$
  so 
  \[
  \lambda=1
  \] and thus $m\alpha =am$, $\alpha n = a n$ and 
  \[
  \alpha\beta=\beta\alpha.
  \]
  If $m=n=0$, then the argument of the previous \textsection\  shows that the order of $\langle \alpha,\beta\rangle$ is at most $q^2-1$, so the order of $\langle  \gamma, \gamma'\rangle$ is at most $q^3-1$. Otherwise, $a$ is an eigenvalue of $\alpha$ and it is easy to see that the order of $\langle  \gamma, \gamma'\rangle$ is at most $q^3-1$ in this case too, which proves the lemma. 
\end{proof}

\begin{proof}[Proof of the assertion that $\ZI^2\inj \G$ in Theorem \ref{intrdensity}]

The local ring 
\[
R/I_p\simeq \FI_q[y]/\langle f_p(y)^{s_p} \rangle
\]
has residue field  
\[
\F_{q_p}=R/\langle f_p\rangle\simeq \FI_q[y]/\langle f_p(y)\rangle.
\]
By Theorem 6.6 in \cite{LSV}
\[
\PSL_3(R/I_p)\subset \G_{I_p}\subset \PGL_3(R/I_p).
\]
Let $\Lambda$ be any subgroup of $\G$ isomorphic to $\ZI^2$. 
Lemma \ref{lemdens1} shows that the order of the image of $\Lambda$ in   $\G_{I_p}$ is at most 
\[
3{q_p^{2\lceil\log_{q_p}s\rceil}(q_p^3-1)}
\] 
where $q_p$ is the order of the residue field. 
Denote by $\Pi\simeq \RI^2$ the flat associated to $\Lambda$ in $X$ and  let $\sC_p$ be the set of $\G(I_p)$-orbits of chambers in $X$.
 The set $F_p$ of  $\Lambda\cap \G(I_p)$ orbits of chambers in $\Pi$ is finite, and there is  a constant $C$ (namely, $C=3\times$the number of $\Lambda$ orbits of chambers in $\Pi$) such that 
\[
|F_p|\leq Cq_p^{2{\lceil\log_{q_p}s_p\rceil}+3}.
\] 
Let $F_p'$ be the image of $F_p$ in $\sC_p$.

Consider a sequence $Y_1,Y_2,Y_3,\ldots$ of independent identically distributed random variable with values in $\sC_p$.
The probability $\IP(E_n)$ of the event:

\[
E_n= \{Y_i\notin F_p', \; \forall i\leq |\sC_p|^\delta\}
\]
can be estimated below by 
\[
\IP(E_n)\geq \left(1-{|F_p'|\over |\sC_p|}\right)^{|\sC_p|^\delta}
\]
therefore 
\[
\IP(E_n)\geq e^{-2|\sC_p|^{\delta-1}|F_p'|}.
\] 

\noindent Since $\G_{I_p}$ acts freely on $\sC_p$ with $\sC_p/\G_{I_p}$ fixed,
 \[
|\sC_p|\geq D|\G_{I_p}|
\]
for some constant $D>0$. Let us compute $|\G_{I_p}|$. The reduction map 
\[
R/I_p\to R/\langle f_p\rangle=\FI_{q_p}
\]
induces a surjective map
\[
\pi: \GL_3(R/I_p)\to \GL_3(\FI_{q_p})
\]
since $\det \colon \GL_3(R/I_p)\to (R/I_p)^\times$
commutes with reduction modulo $\langle f_p\rangle$, and since an element is invertible in $R/I_p$ if and only if its image in $\FI_{q_p}$ is nonzero  (as $R/I_p$ is local). Therefore any pull-back of a matrix of $\GL_3(\FI_{q_p})$ in $\M_3(R/I_p)$ is a matrix in   $\GL_3(R/I_p)$.
The kernel of $\pi$ consists of matrices of the form $\id+\gamma_0$ where all coefficient of $\gamma_0$ belong to the ideal $\langle f_p\rangle$.
Therefore 
\begin{align*}
\left | \GL_3(R/I_p)\right |&=q_p^{9(s_p-1)} \left | \GL_3(\FI_{q_p})\right |= q_p^{9s_p-6}(q_p^3-1)(q_p^2-1)(q_p-1)
\end{align*}
As $\det$ 
is surjective and $a\in (R/I_p)^\times$ if and only if $a$ is not a multiple of $f_p$, we have that
\begin{align*}
|\SL_3(R/I_p)| & ={1 \over |(R/I_p)^\times|}|\GL_3(R/I_p)|\\
&={1 \over q_p^{s_p-1}(q_p-1)}q_p^{9s_p-6}(q_p^3-1)(q_p^2-1)(q_p-1)\\
&=q_p^{8s_p-5}(q_p^3-1)(q_p^2-1).
\end{align*}
This gives
\begin{align*}
|\G_{I_p}|\geq |\PSL_3(R/I_p)| &=\mu_p^{-1} |\SL_3(R/I_p)|=\mu_p^{-1}q_p^{8s_p-5}(q_p^3-1)(q_p^2-1)
\end{align*}
where  $\mu_p=|\{a\in(R/I_p)^\times\mid a^3=1\}|$.
Therefore,
\begin{align*}
|\sC_{n}|^{\delta-1} |F_p'|&\leq CD^{\delta-1}\mu_p^{1-\delta}q_p^{2{\lceil\log_{q_p}s_p\rceil}+3}q_p^{8s_p(\delta-1)}\\
&=CD^{\delta-1}\mu_p^{1-\delta}q_p^{2{\lceil\log_{q_p}s_p\rceil}+8s_p(\delta-1)+3}.
\end{align*}
Thus, since ${\lceil\log_{q_p}s_p\rceil\over s_p}\longrightarrow_{n\to \infty} 0$ when $\deg f_p^{s_p}\to \infty$, we obtain:

\begin{itemize}
\item[(i)] If  $s_p=1$ for large $n$, then $(R/I_p)^\times$ is cyclic, $\mu_p\leq 3$ (for large $n$) and thus 
$\IP(E_n)\to1$ 
if
\[
8(\delta-1)+3<0
\]
that is,  
\[
\delta<{5\over 8}
\]

\item[(ii)] if $s_p\geq k\geq 2$ for large $n$, then $(R/I_p)^\times$ is not cyclic in general, but using the rough estimate 
$\mu_p\leq q_p^{s_p-1}$ (or $\mu_p\leq q_p-1$ if the  order satisfy $q\neq 3^\ell$,  $\ell=1,2,3,\ldots$), we have that $\IP(E_n)\to1$ 
if  
\[
(1-\delta)(s_p-1)+ 8s_p(\delta-1)+3<0
\]
that is
\[
\delta< {7k-2\over 7k+1}
\]

\item[(iii)] if $(f_p^{s_p})$ doesn't satisfy (i) or (ii), we separate the cases $s_p\geq 2$ and $s_p=1$ and obtain the desired conclusion when $\delta<{5\over 8}$ as  ${7k-2\over 7k+1}>{5\over 8}$ for $k\geq 2$. 

\end{itemize}

This shows that $\IP(E_n)\to1$. Therefore,  Lemma \ref{L - lifting lemma} applies with overwhelming probability and we have that $\ZI^2\inj \G$ with overwhelming probability.
\end{proof}

\begin{remark}
We have proved the stronger result that every embedding $\ZI^2\inj \G_3$ eventually lifts to an embedding $\ZI^2\inj \G$ with overwhelming probability. Recall that according to Mostow and Prasad, the set of embeddings $\ZI^2\inj \G_3$ is dense \cite{mostow}.  
\end{remark}

\begin{question} Are these random groups residually finite?   (For comparison, note that there exist non residually finite \cite{Deligne} central extensions of the form $1\to \ZI\to\widetilde{ \Sp({2n},\ZI)}\to \Sp({2n},\ZI)\to 1$.)
\end{question}

\begin{question} What is the value of $\delta_{\ZI^2}(X_3,\G_3,\{\G_3(I_p)\})$? What is the value of   $\delta_{\IR^2}(X_3,\G_3,\{\G_3(I_p)\})$?
\end{question}

\begin{question}[``Perforated buildings''] If $X$ is a Bruhat--Tits building of rank 2 (possibly exotic) and $Y\subset X$ is an $R$-separated subset of chambers, does $\IR^2\inj X\setminus Y$ for $R$ large enough? 
\end{question}

\begin{remark}
It is proved in \cite{LSV}  that the congruence quotients of $X_3$ by $\G_3(I_p)$ are Ramanujan complexes. This fact,  whose proof relies on the ``Jacquet-Langlands correspondence" (in positive characteristic), is not used in the present paper. (Nevertheless, the random quotients $X/\G$ as in Theorem \ref{intrdensity}  are Ramanujan complexes ``up to $\e$", at least if the density is not too large.) 
The main input from algebraic groups used in the proof is Strong Approximation, and so it is most likely that results in the spirit of Theorem \ref{intrdensity} can also be established for other algebraic groups (e.g. $\Sp$) and over other fields (e.g.\ characteristic 0), the first step being to construct explicit families of lattices.  We also note that the case of positive characteristic is easier to handle from an algorithmic perspective (the construction of $f_p$ can be done in polynomial time using the Shoup algorithm).
\end{remark}

\section{Further properties of the random group}\label{propertyT}

The critical density $\delta_T(X,\G,\{G_p\})$ for Kazhdan's property T can be estimated using the so-called ``spectral criterion for property T", which is local. This is done by expliciting (spectral) bounds on the local geometric perturbations, following \cite[\textsection 6]{Garland} and \cite{FH}. 

We recall (see \cite{HV} for more details) that the ``spectral criterion" refers to the following well--known result of Garland \cite[Theorem 5.9]{Garland}:

\begin{theorem}[Garland]\label{T-Garland}
Let $V$ be finite simplicial complex of dimension $\ell$ in which every simplex belongs to an $\ell$-cell, and let $r\leq \ell$. Assume that for every vertex $v\in V^{(0)}$, the $r$-th reduced cohomology group $\tilde H^r(\Lk v,\RI)$ [i.e.\ $\tilde H^r=H^r$ if $r\geq 1$ and $\tilde H^0=\coker (H^0(\star)\to H^0)$] of the link $\Lk v$ of $V$ vanishes, and denote by $\lambda_r(\Lk v)$ the smallest nonzero eigenvalue of $\Delta^+=d\del$ acting on $C^r(\Lk v)$. Set $\lambda_r:=\min_v \lambda_r(\Lk v)$. 
If 
\[
\lambda_{r-1}\geq {\ell-r\over r+1}
\] 
then $H^r(V,\rho)=0$ for every finite dimensional unitary representation $\rho$ of $\pi_1(V)$.  

(If $\ell=2$, $r=1$, this says
 \[
 \lambda_0>1/2\impl H^1(V,\rho)=0
 \]
 for any $\rho$.)
\end{theorem}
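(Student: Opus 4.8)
The plan is to run \emph{Garland's method}: deduce the vanishing from a Hodge-theoretic local-to-global estimate for the combinatorial Laplacian of $V$. (The theorem is \cite[Theorem~5.9]{Garland}; I only describe the architecture --- see also \cite{FH} and \cite[Chap.~5]{HV}.)

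First I would set up Hodge theory on $V$. Fix a finite-dimensional unitary representation $\rho$ of $\pi_1(V)$ with associated flat Hermitian bundle $E_\rho\to V$, and give the cochain complex $C^{\bullet}(V,E_\rho)$ the inner product in which a $k$-simplex $\sigma$ carries the weight $\#\{\,\ell\text{-cells of }V\text{ containing }\sigma\,\}$. The hypothesis that every simplex lies in an $\ell$-cell makes all these weights positive, and finiteness of $V$ together with finite-dimensionality of $\rho$ turns each $C^{k}(V,E_\rho)$ into a finite-dimensional Hilbert space. Writing $\del^{*}$ for the adjoint of the coboundary $\del$ and $\Delta_{r}=\del^{*}\del+\del\del^{*}$, Hodge theory gives $H^{r}(V,\rho)\cong\ker\Delta_{r}$, so it suffices to prove that every harmonic cochain $\omega\in C^{r}(V,E_\rho)$ --- one with $\del\omega=0$ \emph{and} $\del^{*}\omega=0$ --- vanishes, for $1\le r\le\ell-1$.

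The heart of the matter is localization to the links. For each vertex $v$ the link $\Lk v$ inherits a weighted cochain complex with coefficients in the restriction of $E_\rho$, and $\omega$ restricts to a degree-$(r-1)$ cochain $\omega_{v}$ on $\Lk v$ by $\omega_{v}(\sigma)=\omega(v\ast\sigma)$ (parallel-transported to the fibre over $v$). Two families of facts then have to be checked. \emph{(a) Local consequences of harmonicity:} the cocycle relation $\del\omega=0$ identifies the plain restriction of $\omega$ to $\Lk v$ with $\del_{\Lk v}\omega_{v}$, and $\del^{*}\omega=0$ gives $\del^{*}_{\Lk v}\omega_{v}=0$; combined with the vanishing of the appropriate reduced cohomology of the links (for $\ell=2$, $r=1$ this is just connectedness of $\Lk v$, i.e.\ $\tilde H^{0}(\Lk v,\RI)=0$), this places $\omega_{v}$ in the orthogonal complement of the cocycles of $\Lk v$, so that the spectral gap applies: $\|\del_{\Lk v}\omega_{v}\|_{\Lk v}^{2}\ge\lambda_{r-1}(\Lk v)\,\|\omega_{v}\|_{\Lk v}^{2}\ge\lambda_{r-1}\,\|\omega_{v}\|_{\Lk v}^{2}$. \emph{(b) Summation identities:} since an $r$-simplex of $V$ has $r+1$ vertices and the link weights are induced from $V$, a direct count yields $\sum_{v}\|\omega_{v}\|_{\Lk v}^{2}=(r+1)\,\|\omega\|_{V}^{2}$, and a parallel tally of $\ell$-cells yields Garland's identity, which for harmonic $\omega$ collapses to $\sum_{v}\|\del_{\Lk v}\omega_{v}\|_{\Lk v}^{2}=(\ell-r)\,\|\omega\|_{V}^{2}$ (the contributions built from $\del\omega$ and $\del^{*}\omega$ drop out).

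Combining (a) and (b),
\[
(\ell-r)\,\|\omega\|_{V}^{2}=\sum_{v}\|\del_{\Lk v}\omega_{v}\|_{\Lk v}^{2}\ \ge\ \lambda_{r-1}\sum_{v}\|\omega_{v}\|_{\Lk v}^{2}=(r+1)\,\lambda_{r-1}\,\|\omega\|_{V}^{2},
\]
so $\bigl(\ell-r-(r+1)\lambda_{r-1}\bigr)\|\omega\|_{V}^{2}\ge 0$; hence $\lambda_{r-1}\ge{\ell-r\over r+1}$ forces $\omega=0$, i.e.\ $H^{r}(V,\rho)=0$ (for a finite complex there is no distinction between reduced and ordinary cohomology, so the borderline case is covered too). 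In the case $\ell=2$, $r=1$ highlighted in the statement everything is explicit: $\Lk v$ is a graph, $\del^{*}\omega=0$ says that each $\omega_{v}$ has vanishing weighted mean on $\Lk v$, $\del\omega=0$ identifies $\del_{\Lk v}\omega_{v}$ with the restriction of $\omega$ to the edges of $\Lk v$, the two counts read $\sum_{v}\|\del_{\Lk v}\omega_{v}\|^{2}=\|\omega\|^{2}$ and $\sum_{v}\|\omega_{v}\|^{2}=2\|\omega\|^{2}$, whence $\|\omega\|^{2}\ge 2\lambda_{0}\|\omega\|^{2}$ and $\lambda_{0}>1/2\Rightarrow\omega=0$. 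The main technical obstacle is step (b): proving Garland's summation identity with exactly the curvature constant $\ell-r$, and, hand in hand, verifying that it is precisely the reduced-cohomology hypothesis on the links that confines $\omega_{v}$ to the range where the link gap $\lambda_{r-1}$ can legitimately be invoked. (For the property-$T$ application one then feeds the case $r=1$ into the standard reduction from almost-invariant vectors to nonvanishing of $\overline{H}^{1}(\Gamma,\cdot)$; that reduction is not part of the statement proved here.)
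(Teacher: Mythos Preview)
The paper does not give its own proof of this theorem: it is quoted as a known result of Garland (see the sentence immediately preceding the statement and the discussion immediately following it, which attributes the general form to Borel's exposition \cite{Borel} of \cite{Garland}). Your sketch is a faithful outline of Garland's localization-to-links argument, which is exactly the proof the paper is citing, so in that sense you and the paper agree.

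One small point: your treatment of the borderline case $\lambda_{r-1}=\dfrac{\ell-r}{r+1}$ is not justified by the remark you make. The sentence ``for a finite complex there is no distinction between reduced and ordinary cohomology, so the borderline case is covered too'' is a non sequitur --- at equality your displayed inequality becomes $0\ge 0$ and says nothing about $\omega$. Note that the paper's own parenthetical restatement for $\ell=2$, $r=1$ already uses the \emph{strict} inequality $\lambda_0>1/2$, and it is the strict form that is invoked later (e.g.\ in the proof of Theorem~\ref{T - T} and in Proposition~\ref{t-1}). So for the purposes of this paper the strict inequality is all that is needed, and your argument covers that cleanly.
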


The statement in \cite{Garland} deals only with those $V$ whose universal coverings are Bruhat-Tits buildings, but Borel points out  \cite[Theorem 1.5]{Borel} that Garland's argument is  general.  (While we are mostly interested in Bruhat-Tits buildings here, it is the general case that is useful for us.) Borel also notes in \textsection 2.2 that the assumption that $V$ is finite can be replaced by ``uniformly locally finite", provided that the conclusion refers to the vanishing of $L^2H^r(V,\rho)$ ($L^2$-summable cochains). It was later shown that the family of admissible $\rho$  with vanishing $H^r$ could be substantially enlarged, in particular, the linearity assumption on $\rho$ can be disposed of (in the smooth category \cite{Wang}), as can the finite dimensionality assumption on $\rho$ (see \cite{Pansu,  Zuk, BS}).  The latter leads to the conclusion that  $H^r(V,\rho)=0$ for every unitary representation $\rho$ of $\pi_1(V)$ which, in the case $r=1$, is equivalent to $\G$ having Kazhdan's property T by a well--known result of Delorme and Guichardet (see \cite{HV}). Garland studies in particular the 2-dimensional case, exploiting the computation of the exact value of $\lambda_0$ for Tits buildings  (associated with BN-pairs, the computation appears in \cite{FH}  and \cite[Proposition 7.10] {Garland}) to derive the vanishing of $H^1$ ``when the order is large enough". (As Garland observes, his cohomology vanishing results for lattices -- with finite dimensional unitary targets -- were in fact already known in degree 1 as they were covered by Kazhdan's work.)

We now explain why property T holds for random groups ``when the order is large enough" 
(in particular when $q\geq 5$ if $\delta<1/2$, and  the deterministic data is of type $\tilde A_2$, and for $q\gg1$ if  $\delta$ is only assumed to be bounded away from 1).

The idea is that $\lambda_0$ is controlled if the number of chambers removed is significantly smaller than the order  (defined as the minimal number of chambers on an edge minus 1) of $V$:

\begin{lemma}\label{L-large order} For every $\delta>0$ and every integer $n\geq0 $, there exists a constant $\delta'>0$ such that if $G$ is a graph of order $q\geq 5n/\delta$ with $\lambda_0(G)>1-\delta'$, and $G'\subset G$ is a subgraph with $|{G'}^{(1)}|\geq |G^{(1)}|-n$, then $\lambda_0(G')\geq 1- \delta$.
\end{lemma}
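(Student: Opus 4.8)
The plan is to relate the spectral gap of $G'$ to that of $G$ by a perturbation argument, where the perturbation is the removal of at most $n$ edges from the graph $G$. I will work with the normalized Laplacian $\Delta^+ = d\partial$ on $C^0$ (equivalently, on functions on vertices, modulo constants), so that $\lambda_0(G)$ is the smallest nonzero eigenvalue. Recall that $\lambda_0(G) = \inf \{ \langle \Delta^+ f, f\rangle / \|f\|^2 : f \perp \mathrm{const}\}$, where the relevant inner products are the natural ones weighted by vertex degrees and edge multiplicities. The key point is that $G$ has order $q$, so it is $(q+1)$-regular (or at least has all vertex degrees comparably large, of size $\asymp q$), and removing $n$ edges changes each of the two relevant quadratic forms --- the Dirichlet form $f \mapsto \sum_{e} |df(e)|^2$ and the normalizing form $f \mapsto \sum_v \deg(v)|f(v)|^2$ --- by a controlled amount relative to their size.

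First I would set up the comparison of quadratic forms. Let $Q_G(f) = \sum_{e \in G^{(1)}} |f(e^+) - f(e^-)|^2$ and $N_G(f) = \sum_{v} \deg_G(v) |f(v)|^2$ be the numerator and denominator in the Rayleigh quotient for $G$, and similarly $Q_{G'}, N_{G'}$. Since $G'$ is obtained by deleting a set $F$ of at most $n$ edges, we have $Q_{G'}(f) = Q_G(f) - \sum_{e \in F} |f(e^+)-f(e^-)|^2$ and $N_{G'}(f) = N_G(f) - \sum_{e \in F}(\text{contributions to the two endpoints})$, so that $0 \le N_G(f) - N_{G'}(f) \le 2\sum_{e\in F}(|f(e^+)|^2 + |f(e^-)|^2)$. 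Each term $|f(e^+)-f(e^-)|^2 \le 2(|f(e^+)|^2 + |f(e^-)|^2) \le (2/q)(N_G\text{-contribution of those two vertices})$ roughly, because every vertex has $G$-degree $\ge q$. The upshot is an inequality of the shape $Q_{G'}(f) \ge Q_G(f) - (\text{const}\cdot n/q) N_G(f)$ and $N_{G'}(f) \ge (1 - \text{const}\cdot n/q) N_G(f)$, valid for all $f$; here one has to be slightly careful to bound the single-vertex sums $|f(v)|^2$ for the $\le 2n$ affected vertices by $(1/q)N_G(f)$, which is legitimate since $\deg_G(v)\ge q$.

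The second step is to deduce the eigenvalue bound. For $f \perp_{G'} \mathrm{const}$ I would like to write $f = f_0 + c$ where $f_0 \perp_G \mathrm{const}$, with $|c|$ small; since the two orthogonality conditions differ only by the $\le 2n$ deleted half-edge weights out of total weight $\asymp q |G^{(0)}|$, the constant $c$ satisfies $|c|^2 \le (\text{const}\cdot n / (q|G^{(0)}|)) N_G(f)$ or something comparable, which after plugging in is negligible. Then $Q_{G'}(f) = Q_{G'}(f_0) \ge Q_G(f_0) - (\text{const}\,n/q)N_G(f_0) \ge (\lambda_0(G) - \text{const}\,n/q) N_G(f_0)$, and $N_{G'}(f) \le N_G(f) = N_G(f_0) + |c|^2 N_G(\mathrm{const})$-type terms $\le (1 + \text{const}\,n/q) N_G(f_0)$. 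Dividing gives $\lambda_0(G') \ge (\lambda_0(G) - \text{const}\,n/q)/(1 + \text{const}\,n/q)$. Now impose $q \ge 5n/\delta$: this makes $\text{const}\,n/q \le \text{const}\,\delta/5$, and choosing $\delta'$ small enough (a fixed fraction of $\delta$, depending only on the absolute constant) and using $\lambda_0(G) > 1 - \delta'$, elementary arithmetic gives $\lambda_0(G') \ge 1 - \delta$, as desired.

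The main obstacle --- really the only delicate point --- is bookkeeping the weights correctly: the link graphs arising in Garland's criterion carry vertex weights (the degrees) and possibly multiple edges, and the deletion of chambers deletes edges from the link, slightly changing \emph{both} the combinatorial structure and the weights that define the $\ell^2$ inner product. One must make sure that ``order $q$'' genuinely forces every vertex degree in $G$ to be at least $q$ (or at least $\asymp q$), so that a single deleted edge perturbs the normalization form by a factor $O(1/q)$ and not $O(1)$, and one must check that the shift between the two ``mean zero'' subspaces is controlled by the same small parameter. Once the constants are tracked, everything reduces to the one-line Rayleigh-quotient estimate above; I would keep the constants implicit and simply fix $\delta'$ at the end to absorb them.
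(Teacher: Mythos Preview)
Your approach is correct and genuinely different from the paper's. The paper argues via the Cheeger constant: it shows that the isoperimetric ratio $h(G')$ differs from $h(G)$ by at most $O(n/q)$ (since deleting $n$ edges changes the edge-boundary $|\partial A|$ by at most $n$, while the weighted volumes $|A|_G$ are at least $q$), and then transfers back to $\lambda_0$ using the two-sided Cheeger inequality. Your route is a direct perturbation of the Rayleigh quotient: you compare the Dirichlet form $Q$ and the degree-weighted $\ell^2$ form $N$ on $G$ and on $G'$, using the pointwise bound $|f(v)|^2 \le N_G(f)/q$ (valid since every vertex has degree at least $q$) to control the contribution of the $\le n$ deleted edges, and you handle the small shift between the two ``mean zero'' subspaces by projecting off the constant.

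Your argument is in fact sharper. The Cheeger route loses a square root when passing from $h(G')$ back to $\lambda_0(G')$ (one only gets $\lambda_0(G') \ge h(G')^2/2$), so the paper's displayed inequality $\lambda_0(G) \le 2\sqrt{2\lambda_0(G')} + \delta/2$ does not by itself yield $\lambda_0(G') \ge 1-\delta$ for small $\delta$; the paper's proof is really a sketch of why the result holds rather than a tight estimate. Your Rayleigh-quotient comparison gives the clean linear bound $\lambda_0(G') \ge \lambda_0(G) - Cn/q$ (up to lower-order terms), from which the choice $\delta' \asymp \delta$ is immediate once $q \ge 5n/\delta$. The only point to make explicit in a write-up is that your inequality, applied to a locally constant $f$, also shows $G'$ stays connected under the hypotheses, so the Rayleigh characterization of $\lambda_0(G')$ over functions orthogonal to the global constant is the correct one.
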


Indeed, by the Cheeger inequality (for graphs), it is enough to prove the corresponding statement for the Cheeger constant, then the numerator of the two isoperimetric ratios differ by  a constant  that is washed out as $q\to \infty$. 

More explicitly:

\begin{proof}
For a subset $A\subset V$, let $h_G(A):={|\del_G A|\over \min(|A|_G,|X-A|_G)}$, where $|\del_G A|$ is the number of  edges with extremities belonging to both $A$ and $X-A$, and $|A|_G:=\sum_{x\in A}\val_G(x)$. The Cheeger constant of $G$ is $h(G):=\min_{A\subset G} h_G(A)$.
Let $A$ be a subset of $V'$ such that $h(G')=h(A)$. 
By the Cheeger inequality,
\begin{align*}
\lambda_0(G)&\leq 2h(G)\leq {2|\del_G A|\over \min(|A|_G,|X-A|_G)}\\
&\hskip1cm\leq {2|\del_{G'} A|+2n\over \min(|A|_{G'},|X-A|_{G'})}=2h(G')+{2n\over \min(|A|_{G'},|X-A|_{G'})}
\end{align*}
Since $q\geq 5n/\delta$, we obtain $\lambda_0(G)\leq 2h(G')+\delta/2$. Thus
\[
\lambda_0(G)\leq 2\sqrt{2\lambda_0(G')} +\delta/2
\] 
using again the Cheeger inequality.
\end{proof}

Let $X$ be a thick  irreducible Euclidean building of rank 2. The links of $X$ are spherical buildings and there exist  integers $q^*\geq q_*\geq 2$ such that the degree of an edge in $X$ is either $q^*+1$ or $q_*+1$. We call $q_*$ the order of $X$. In the $\tilde A_2$ case,  $q^*=q_*$.

\begin{theorem}[Generic property T]\label{T - T}
For every $\delta_0<1$, there exists $q_0$ such that if $\delta<\delta_0$ and if in the deterministic data $(X,\G,\{\G_p\})$, the space  $X$ is a thick classical (i.e.\ associated with an algebraic group over a local field) irreducible Bruhat--Tits building of rank 2 and order $q_*\geq q_0$,  then the random group at density $\delta$ with deterministic data $(X,\G,\{\G_p\})$ has Kazhdan's property T with overwhelming probability.  Thus 
\[
\delta_T(X,\G,\{\G_p\})\to 1
\] 
as $q_*(X)\to \infty$.
\end{theorem}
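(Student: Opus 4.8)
The plan is to verify Garland's spectral criterion (Theorem~\ref{T-Garland}), in its two-dimensional form, on the finite $2$-complex $V_A:=X_A/\G_p$, whose fundamental group is the random group $\G_A$. Since $X$ is contractible, Lemma~\ref{L - cohomological dim} forces $\tilde X_A$ to be contractible, so $V_A$ is a $K(\G_A,1)$ and hence $H^1(V_A,\rho)=H^1(\G_A,\rho)$ for every unitary representation $\rho$ of $\G_A$; moreover $\G_A$ is finitely presented. Combining the spectral criterion with the extensions of Garland's vanishing theorem to arbitrary unitary targets \cite{Pansu, Zuk, BS} and with the Delorme--Guichardet theorem \cite{HV}, it therefore suffices to show that, with overwhelming probability and for $q_*$ large enough (depending only on $\delta_0$), every vertex $v$ of $V_A$ lies in a $2$-cell, its link $\mathrm{Lk}(v)$ in $V_A$ is connected, and the smallest nonzero eigenvalue $\lambda_0(\mathrm{Lk}(v))$ of the Laplacian $\Delta^+$ on $C^0(\mathrm{Lk}(v))$ satisfies $\lambda_0(\mathrm{Lk}(v))>\tfrac12$.

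First I would fix, given $\delta_0<1$, an integer $k$ with $\delta_0<\tfrac{k}{k+1}$. By Proposition~\ref{d12}(1), applied with a radius $r=2$ (so that a ball of radius $r$ contains the closed star of any vertex), with overwhelming probability \emph{every} vertex of the finite complex $V_A$ meets at most $k$ of the removed chambers, uniformly in the vertex. Removing an open $2$-cell leaves the $1$-skeleton unchanged, so the link $\mathrm{Lk}_{X_A}(v)$ of $v$ in $X_A$ is obtained from its link $\mathrm{Lk}_{X}(v)$ in $X$ by deleting at most $k$ edges; and since each edge of $X$ lies in at least $q_*+1$ chambers, the inequality $q_*\geq k$ already forces every edge — hence every vertex and every $2$-cell — of $V_A$ to satisfy Garland's hypothesis of being contained in a $2$-cell.

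It remains to control the link spectra. The link $\mathrm{Lk}_{X}(v)$ is a thick spherical building of rank $2$ determined by the residue data of $X$, i.e.\ a generalized $m$-gon; as a graph it is regular of degree $q_*+1$ in the $\tilde A_2$ case and biregular with both degrees comparable to $q_*$ in general, and there are only finitely many isomorphism types (one per $\G$-orbit of vertices). By the Feit--Higman eigenvalue computation, reproduced in \cite{FH} and \cite[Prop.~7.10]{Garland}, one has $\lambda_0(\mathrm{Lk}_{X}(v))=1-\varepsilon(q_*)$ with $\varepsilon(q_*)\to0$ as $q_*\to\infty$, uniformly in $v$. Now I would apply Lemma~\ref{L-large order} with $n:=k$ and target gap, say, $\tfrac23$: it produces $\delta'>0$ and a threshold $q_1=q_1(k)$, linear in $k$, such that whenever $q_*\geq q_1$ and $\lambda_0(\mathrm{Lk}_{X}(v))>1-\delta'$, the perturbed link $\mathrm{Lk}_{X_A}(v)$ — which differs from $\mathrm{Lk}_{X}(v)$ by at most $k$ edges — satisfies $\lambda_0(\mathrm{Lk}_{X_A}(v))>\tfrac23>\tfrac12$, and is in particular connected. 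Choosing $q_0=q_0(\delta_0)$ large enough that $q_0\geq\max(k,q_1(k))$ and $\varepsilon(q_0)<\delta'$, all the conditions of the previous paragraph hold simultaneously with overwhelming probability whenever $q_*\geq q_0$ and $\delta<\delta_0$, whence $\G_A$ has property~T with overwhelming probability. The displayed limit is then immediate: the above shows $\delta_T(X,\G,\{\G_p\})\geq\delta_0$ as soon as $q_*\geq q_0(\delta_0)$, while $\delta_T(X,\G,\{\G_p\})\leq\delta_{FA}(X,\G,\{\G_p\})\leq\tfrac{q_*(X)}{q_*(X)+1}<1$ by Proposition~\ref{p-d1}; since $\delta_0<1$ was arbitrary, $\delta_T(X,\G,\{\G_p\})\to1$ as $q_*(X)\to\infty$.

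The main obstacle is the coupling of two scales: Lemma~\ref{L-large order} yields a spectral perturbation tolerance $\delta'$ depending on the number $k$ of chambers deleted near a vertex, $k$ is fixed by $\delta_0$, and $q_0$ must then be chosen large enough both to make the Feit--Higman gap $1-\varepsilon(q_*)$ of the \emph{unperturbed} links exceed $1-\delta'$ and to satisfy $q_*\geq q_1(k)$ — so one must check that a single $q_0=q_0(\delta_0)$ works uniformly for all $\delta<\delta_0$. (This is also why a large residue field is needed, exactly as in Garland's original argument.) The second delicate point, which is precisely what Proposition~\ref{d12} supplies, is that the bound ``$\leq k$ removed chambers near any vertex'' must hold \emph{simultaneously over all vertices of $V_A$} with overwhelming probability; this is where the hypothesis $\delta<\delta_0<1$, rather than merely $\delta<1$, enters, through the first-moment estimate of that proposition.
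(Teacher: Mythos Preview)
Your proposal is correct and follows essentially the same route as the paper: fix $k$ with $\delta_0<\tfrac{k}{k+1}$, invoke Proposition~\ref{d12}(1) to bound the number of missing chambers at each vertex, use Feit--Higman/Garland to get $\lambda_0$ of the unperturbed spherical links close to $1$, apply Lemma~\ref{L-large order} to propagate the spectral gap to the perturbed links, and conclude by Garland's criterion (Theorem~\ref{T-Garland}). Your write-up is in fact more careful than the paper's on two points: you explicitly justify that $V_A$ is a $K(\G_A,1)$ and that the links stay connected, and you add the upper bound $\delta_T\leq\delta_{FA}\leq\tfrac{q_*}{q_*+1}$ from Proposition~\ref{p-d1} to pin down the limit $\delta_T\to 1$ (the paper only establishes the lower bound).
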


\begin{remark}
The groups appearing in the above result (and in Corollary  \ref{C- random building with chambers missing bounded model}) are buildings with chambers missing in the sense of \cite{chambers}. Some (weak) buildings with chambers missing have the ``opposite" Haagerup property, see \cite{chambers}.  We also note that  random groups in the density model  admit an infinite quotient with property T, \emph{independently of the order}, as soon as we are given a  deterministic data with property T.  
\end{remark}

\begin{proof}
Let $k$ be large enough so that $\delta_0<{k\over k+1}$.  Let $0<\e<1/2$ be given. By Lemma \ref{L-large order}, we can find $\e'>0$ such that if $\lambda_0(G)>1-\e'$ then $\lambda_0(G-k)>1-\e>1/2$. Feit--Higman \cite{FH}  and Garland \cite{Garland} show that if $G_q$ is a spherical building of order $q$, then  $\lambda_0(G_q)$ converges to 1 as $q\to \infty$.  Let $q_0$ be larger than $10k$ and be such that $\lambda_0(G_q)>1-\e'$ for any spherical building $G_1$ of order $q\geq q_0$.  If in the deterministic data $(X,\G,\{\G_p\})$, the space  $X$ is a thick classical irreducible building of rank 2 such that $q_*(X)\geq q_0$, and if $\delta<\delta_0$, then by Proposition \ref{d12}, the links of $X$ contain at most $k$ chambers missing at every vertex, with overwhelming probability.  Therefore, the random perforated building $X\setminus \G A$ at density $\delta$ satisfies $\lambda_0(X\setminus \G A)\geq 1-\e>1/2$ with overwhelming probability.
Since the random universal cover $\widetilde{X\setminus \G A}$ is locally isomorphic to  ${X\setminus \G A}$, Theorem \ref{T-Garland} applies. Thus the random group has property T (and the spectral gap is arbitrarily close to 1) with overwhelming probability.      
\end{proof}

It is natural to wonder if the bound on the order can be estimated. It seems difficult to give a general answer, but we can do it at least when the density is $<1/2$ (or in the model with few chambers missing). In fact, one can compute the \emph{exact} value of $\lambda_0(X)$, in the spirit of  Feit and Higman's paper \cite{FH}.

More precisely: 

\begin{theorem}
If $(X,\G,\{\G_p\})$ is of type $\tilde A_2$, then both in the bounded model and in the density model of parameter $\delta<1/2$,  the random group has property T (with overwhelming probability) when $q\geq 5$.  
\end{theorem}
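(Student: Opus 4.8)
The plan is to derive property~T from the Garland spectral criterion, Theorem~\ref{T-Garland}, applied (with $\ell=2$ and $r=1$) to the finite aspherical $2$-complex $V_A:=X_A/\G_p$, which is a $K(\G_A,1)$. It is enough to show that, with overwhelming probability, every vertex link $\Lk v$ of $V_A$ is connected, every edge of $V_A$ lies in a $2$-cell, and $\lambda_0(\Lk v)>\tfrac12$, where $\lambda_0(\Lk v)$ is the least positive eigenvalue of the operator $\Delta^{+}$ of Theorem~\ref{T-Garland} on $0$-cochains of the graph $\Lk v$ (the normalized Laplacian of $\Lk v$). Once this is known, Theorem~\ref{T-Garland} gives $H^1(\G_A,\rho)=0$ for every unitary representation $\rho$ of $\G_A$, which by the theorem of Delorme and Guichardet (see \cite{HV}) is exactly Kazhdan's property~T for $\G_A$.

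First I would describe the local geometry of the random complex. A link of a vertex of the $\tilde A_2$-building $X$ is the incidence graph $\Gamma_q$ of the projective plane $\mathrm{PG}(2,q)$: a connected $(q+1)$-regular bipartite graph on $2N$ vertices, $N=q^2+q+1$, with adjacency spectrum $\{\pm(q+1),\pm\sqrt q\}$ of multiplicities $1,1,N-1,N-1$, and removing a chamber of $X$ incident to $v$ deletes one edge of $\Lk v$. In the density model with $\delta<\tfrac12$, Proposition~\ref{d12} applied with $k=1$ (legitimate since $\delta<\tfrac{1}{1+1}$) and with $r$ large enough that the closed star of a vertex sits in a ball of radius $r$ shows that, with overwhelming probability, the random set $A$ meets each such ball in at most one chamber; hence at most one chamber disappears from each vertex link, and since any edge has only two vertices, at most one chamber disappears from each edge, so no free edges are created and each edge stays in $q\geq 5$ chambers. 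In the bounded model the same conclusions hold for $p$ large: the systole of $\G_p$ eventually exceeds any prescribed bound, so each of the $c$ removed $\G_p$-orbits meets a fixed vertex link in at most one chamber, and because the $c$ removed chambers of $X/\G_p$ occupy only $3c$ of the $[\G:\G_p]\to\infty$ vertices they sit at pairwise distinct vertices with overwhelming probability. Thus in both models, with overwhelming probability, every link of $V_A$ is isomorphic either to $\Gamma_q$ or to $\Gamma_q':=\Gamma_q\setminus e_0$ for an incident flag edge $e_0$, a well-defined graph up to isomorphism by the flag-transitivity of $\PGL_3(\FI_q)$, and both are connected ($\Gamma_q$ has girth $6$, so deleting an edge preserves connectedness). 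Since $\lambda_0(\Gamma_q)=1-\tfrac{\sqrt q}{q+1}>\tfrac12$ for every $q\geq 2$ (Feit--Higman \cite{FH}, Garland \cite{Garland}), the whole statement reduces to the single inequality $\lambda_0(\Gamma_q')>\tfrac12$, which should turn out to hold precisely for $q\geq 5$.

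For that inequality the plan is to compute $\lambda_0(\Gamma_q')$ exactly, in the manner of \cite{FH}. Let $A$ be the adjacency matrix of $\Gamma_q$, so that $\Gamma_q'$ has adjacency matrix $A'=A-(e_{p_0}e_{\ell_0}^{\top}+e_{\ell_0}e_{p_0}^{\top})$ and degree matrix $D'=(q+1)I-(e_{p_0}e_{p_0}^{\top}+e_{\ell_0}e_{\ell_0}^{\top})$; writing $\mu_2$ for the second largest eigenvalue of $(D')^{-1/2}A'(D')^{-1/2}$ we have $\lambda_0(\Gamma_q')=1-\mu_2$. The matrix-determinant lemma, applied to this rank-two update, gives for $x=\mu(q+1)$
\[
\det(\mu D'-A')=\det(xI-A)\cdot\bigl[\bigl(1+\beta(x)-\mu\,\alpha(x)\bigr)^{2}-\bigl(\alpha(x)-\mu\,\beta(x)\bigr)^{2}\bigr],
\]
where $\alpha(x)=\bigl[(xI-A)^{-1}\bigr]_{p_0p_0}$ and $\beta(x)=\bigl[(xI-A)^{-1}\bigr]_{p_0\ell_0}$ are the Green's functions of $xI-A$ at a vertex and at an incident pair of vertices (well-defined, independent of the chosen flag, and equal for points and lines by the self-duality of $\mathrm{PG}(2,q)$). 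Expanding $e_{p_0}$ along the four eigenspaces of $A$ — the coefficients being pinned down by $\|e_{p_0}\|^{2}=1$, $\langle e_{p_0},Ae_{\ell_0}\rangle=1$ and the bipartite reflection symmetry — one finds
\[
\alpha(x)=\frac{x}{N}\Bigl(\frac{1}{x^{2}-(q+1)^{2}}+\frac{N-1}{x^{2}-q}\Bigr),\qquad \beta(x)=\frac{1}{N}\Bigl(\frac{q+1}{x^{2}-(q+1)^{2}}+\frac{q^{2}}{x^{2}-q}\Bigr).
\]
After the poles of $\alpha,\beta$ cancel the matching zeros of $\det(xI-A)$, the old normalized eigenvalues $\pm1,\pm\tfrac{\sqrt q}{q+1}$ survive (with multiplicities reduced by a bounded amount), and the only eigenvalues that can enter the band $(\tfrac{\sqrt q}{q+1},1)$ are the roots, with $x=\mu(q+1)$, of the two explicit equations $1+(\beta(x)-\alpha(x))(1+\mu)=0$ and $1+(\alpha(x)+\beta(x))(1-\mu)=0$. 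Clearing denominators turns each into a polynomial equation of small degree in $\mu$; I would then isolate the largest root $\mu_2$ lying in $(\tfrac{\sqrt q}{q+1},1)$, confirm that the simple eigenvalue $\mu=1$ (with eigenvector $\sqrt{d'}$, as $\Gamma_q'$ is connected) is the only eigenvalue in $[\tfrac12,1]$ besides $\mu_2$, and check by direct substitution that $\mu_2<\tfrac12$ exactly when $q\geq 5$, thereby proving $\lambda_0(\Gamma_q')=1-\mu_2>\tfrac12$ in that range.

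The main obstacle is precisely the sharp, quantitative end of this last computation. A soft bound — interlacing, or the operator norm of the rank-two perturbation — only yields $\lambda_0(\Gamma_q')\geq 1-\tfrac{\sqrt q}{q+1}-O(1/q)$, which is too weak to reach $q\geq 5$; one genuinely has to solve the explicit low-degree equations for $\mu_2$ and follow the threshold as $q$ varies. A related technical point, which is what makes the ``no other eigenvalue in $[\tfrac12,1]$'' verification indispensable, is to rule out a spurious eigenvalue created by the puncture from slipping into $(\tfrac12,1)$ near the top of the spectrum of $\Gamma_q'$.
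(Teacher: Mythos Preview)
Your strategy is correct and coincides with the paper's: reduce, via Proposition~\ref{P- r separated} (or Proposition~\ref{d12} with $k=1$), to the case where every vertex link of $V_A$ is the incidence graph $\Gamma_q$ of $\mathrm{PG}(2,q)$ with at most one edge deleted, and then feed the resulting spectral bound into Garland's criterion. The paper states the theorem as an immediate consequence of Proposition~\ref{t-1}, which gives the exact value $\lambda_0(\Gamma_q')=1-\dfrac{\sqrt{q+1/4}+1/2}{q+1}$.

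Where you diverge is in the computation of $\lambda_0(\Gamma_q')$. The paper writes $AA^t$ in a basis adapted to the flag $(p_0,\ell_0)$, observes that $(q+1)^2A^tA-qI$ has rank~$3$, and reduces to a $3\times 3$ matrix whose characteristic polynomial factors explicitly. Your resolvent route is equally valid and, if you actually carry it through, cleaner than you seem to anticipate: with $x=\mu(q+1)$ and $N=q^2+q+1$, your two factored equations simplify (after clearing $N(x^2-q)$ and using $N-1=q(q+1)$, $N(q+1)-1=q(q^2+2q+2)$) to
\[
x^{2}+x-q=0\qquad\text{and}\qquad x^{2}-x-q=0,
\]
yielding the four ``new'' normalized eigenvalues $\mu=\dfrac{\pm 1/2\pm\sqrt{q+1/4}}{q+1}$. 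Exactly one of these, $\mu_2=\dfrac{1/2+\sqrt{q+1/4}}{q+1}$, lies in $\bigl(\tfrac{\sqrt q}{q+1},1\bigr)$, so your worry about a second spurious eigenvalue entering $(\tfrac12,1)$ is unfounded; you should finish this algebra rather than leave it as a plan, since it is the entire content of the theorem beyond the reduction.

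One minor correction: in the bounded model you invoke ``the systole of $\G_p$ eventually exceeds any prescribed bound'', which is neither assumed nor needed. Since $\G_p$ acts freely, $X\to X/\G_p$ is a covering map, so the link of $\bar v$ in $X/\G_p$ is canonically the link of $v$ in $X$; removing a single chamber of $X/\G_p$ therefore removes exactly one edge from each of its three vertex links, and your second clause (the $c$ random chambers are pairwise vertex-disjoint with overwhelming probability, by a birthday estimate) already suffices.
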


This follows from:

\begin{proposition}\label{t-1}
Let $G$ be a spherical building of type $A_2$ and order $q$ with a single chamber missing. Then
\[
\lambda_0(G)=1-{\sqrt{q+{1/4}}+ {1/2}\over q+1}.
\]   
In particular, $\lambda_0(G)>1/2$ whenever $q\geq 5$.
\end{proposition}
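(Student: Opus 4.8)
The spherical building $G$ of type $A_2$ and order $q$ is the bipartite $(q+1)$-regular incidence graph $\Gamma$ on the $q^2+q+1$ points and $q^2+q+1$ lines of the projective plane of order $q$, with chambers $=$ flags $=$ edges; removing a single chamber deletes one edge $\{p_0,\ell_0\}$ and drops the Garland weights $m_{p_0},m_{\ell_0}$ (number of chambers through a vertex $=$ its degree) from $q+1$ to $q$. In this weighting the operator $\Delta^+$ on $C^0(G)$ is $I-M_G$, where $M_G=D_G^{-1}A_G$ is the simple random-walk operator of the underlying graph (self-adjoint for the weighted inner product), so $\lambda_0(G)=1-\mu_2$ with $\mu_2$ the second largest eigenvalue of $M_G$; equivalently $\mu_2$ is the largest $\mu<1$ for which $A_G\phi=\mu D_G\phi$ admits a nonzero solution. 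So the plan is: find all such $\mu$.

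\textbf{The unperturbed spectrum and the rank-two reduction.} Writing $B$ for the point-line incidence matrix, $BB^{\mathrm t}=B^{\mathrm t}B=qI+J$, hence $A_\Gamma=\smatr{0}{B}{B^{\mathrm t}}{0}$ has eigenvalues $\pm(q+1)$ (simple) and $\pm\sqrt q$ (multiplicity $q^2+q$ each), and $\Gamma$ is distance-regular of diameter $3$. Now $A_G=A_\Gamma-E$ and $D_G=(q+1)I-F$ where $E=e_{p_0}e_{\ell_0}^{\mathrm t}+e_{\ell_0}e_{p_0}^{\mathrm t}$ and $F=e_{p_0}e_{p_0}^{\mathrm t}+e_{\ell_0}e_{\ell_0}^{\mathrm t}$ are supported on $\{p_0,\ell_0\}$, so the eigenequation reads $(A_\Gamma-t I)\phi=(E-\mu F)\phi$ with $t=(q+1)\mu$ and right-hand side in $\mathrm{span}\{e_{p_0},e_{\ell_0}\}$. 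Either $(E-\mu F)\phi=0$, which for $\mu\neq\pm1$ forces $\phi(p_0)=\phi(\ell_0)=0$ and $A_\Gamma\phi=t\phi$, hence $\mu\in\{\pm\sqrt q/(q+1)\}$; or $(E-\mu F)\phi\neq0$, and then for $t\notin\{\pm(q+1),\pm\sqrt q\}$ one inverts the resolvent $R(\mu):=(A_\Gamma-tI)^{-1}$, sets $\phi=R(\mu)(\alpha e_{p_0}+\beta e_{\ell_0})$, and imposes consistency. With $R_0(\mu):=R(\mu)_{p_0p_0}=R(\mu)_{\ell_0\ell_0}$ and $R_1(\mu):=R(\mu)_{p_0\ell_0}$ (equal on each distance class by distance-regularity), this is the vanishing of the determinant of $\smatr{R_1-\mu R_0-1}{R_0-\mu R_1}{R_0-\mu R_1}{R_1-\mu R_0-1}$, i.e.\ $(1+\mu)(R_1-R_0)=1$ or $(1-\mu)(R_1+R_0)=1$.

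\textbf{The resolvent entries and the quadratic.} From $A_\Gamma R(\mu)=I+tR(\mu)$ read on the diagonal, $(q+1)R_1(\mu)=1+tR_0(\mu)$, i.e.\ $R_1(\mu)=\tfrac1{q+1}+\mu R_0(\mu)$, so only $R_0$ is needed. Summing the spectral decomposition $R_0(\mu)=\tfrac1{2(q^2+q+1)}\bigl(\tfrac1{q+1-t}-\tfrac1{q+1+t}+\tfrac{q^2+q}{\sqrt q-t}-\tfrac{q^2+q}{\sqrt q+t}\bigr)$ and using $1+(q+1)^3=(q+2)(q^2+q+1)$, everything collapses to $(1-\mu^2)R_0(\mu)=\tfrac{t}{(q+1)^2}\cdot\tfrac{q(q+2)-t^2}{q-t^2}$. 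Substituting this together with $R_1=\tfrac1{q+1}+\mu R_0$ into the two alternatives and clearing denominators turns them into $t^2-t-q=0$ and $t^2+t-q=0$ respectively. Among all roots (of both quadratics, of the unperturbed equation $t^2=q$, and $t=\pm(q+1)$), the unique one in the interval $(\sqrt q,q+1)$ is $t=\tfrac12(1+\sqrt{1+4q})$; since it exceeds $\sqrt q$ it dominates the ``old'' eigenvalue $\sqrt q/(q+1)$ of $M_G$, so $\mu_2=\tfrac{1+\sqrt{1+4q}}{2(q+1)}=\tfrac{\sqrt{q+1/4}+1/2}{q+1}$ and $\lambda_0(G)=1-\tfrac{\sqrt{q+1/4}+1/2}{q+1}$. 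Finally $\lambda_0(G)>\tfrac12\iff\sqrt{q+1/4}<q/2\iff q^2-4q-1>0$, which among prime powers means exactly $q\geq5$.

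\textbf{Main obstacle.} The difficulty is organizational rather than conceptual. One must (i) pin down Garland's normalization so that indeed $\lambda_0(G)=1-\mu_2(M_G)$; (ii) verify \emph{completeness} of the eigenvalue search — every eigenvalue of $M_G$ is either one of the ``old'' values or a root of one of the two quadratics, so that no eigenvalue between $\mu_2$ and $1$ is overlooked, and each admissible root genuinely corresponds to an eigenvector $R(\mu)(\alpha e_{p_0}+\beta e_{\ell_0})\neq0$; and (iii) control the spurious poles at $t^2=q$ introduced when clearing denominators and check they are not solutions. The algebraic collapse to a quadratic is clean once $R_1=\tfrac1{q+1}+\mu R_0$ is exploited, but it is easy to make a sign error in the $2\times2$ determinant or to misattribute which of the two alternatives carries the relevant root.
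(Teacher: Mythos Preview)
Your argument is correct and takes a genuinely different route from the paper's. The paper computes $A^tA$ (with $A=D_q^{-1/2}A_0D_q^{-1/2}$) explicitly as a $3\times 3$ block matrix in a basis adapted to the orbits of the stabilizer of the missing edge, observes that $(q+1)^2A^tA-q\,\Id$ has a $3$-dimensional range spanned by three explicit vectors $v_1,v_2,v_3$, and then diagonalizes the resulting $3\times 3$ matrix; its characteristic polynomial factors as $(x-(q+1)^2)(x^2-(2q+1)x+q^2)$, giving singular values of $A$ equal to $1$, $\sqrt q/(q+1)$, and $\frac{1}{q+1}\sqrt{q+\tfrac12\pm\sqrt{q+\tfrac14}}=\frac{\tfrac12\pm\sqrt{q+1/4}}{q+1}$.

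You instead treat the problem as a rank-two perturbation of the known operator $A_\Gamma$ and use the Weinstein--Aronszajn determinant: the eigenvalue equation becomes $(A_\Gamma-tI)\phi=(E-\mu F)\phi$ with right-hand side supported on $\{p_0,\ell_0\}$, and (outside the unperturbed spectrum) inverting via the resolvent reduces everything to the vanishing of a $2\times 2$ determinant. Distance-regularity and the recursion $(q+1)R_1=1+tR_0$ collapse this to the two quadratics $t^2\mp t-q=0$, which are exactly the ``square roots'' of the paper's quadratic $x^2-(2q+1)x+q^2$ (indeed $(t^2-t-q)(t^2+t-q)=t^4-(2q+1)t^2+q^2$). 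Your method is more systematic and exports directly to $k$ missing chambers (a $2k\times 2k$ determinant, or smaller if there is symmetry), whereas the paper's hands-on block computation is shorter for $k=1$ but does not scale. The dimension count you mention in your ``completeness'' caveat works: the constraints $\phi(p_0)=\phi(\ell_0)=0$ are independent on each $\pm\sqrt q$ eigenspace, so one finds $2+2(q^2+q-2)+4=2(q^2+q+1)$ eigenvalues, and none of the new roots collides with $\pm\sqrt q$ or $\pm(q+1)$, so the spurious poles are genuinely spurious.
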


We recall (see e.g.\ \cite{HV}) that the value computed by Feit and Higman in the $A_2$ case is 
\[
\lambda_0(G)=1-{\sqrt{q}\over q+1}
\]
so in particular, $\lambda_0(G)>1/2$ whenever $q\geq 2$.

\begin{proof}
Let $G\leadsto G'$ be an extension into an $A_2$ building of order $q$ (see \cite{chambers}) and let $P$ be the  projective plane corresponding to $G'$. We choose the following basis for the space of functions on the vertex set of $G$:
\[
(\delta_{p_1},\delta_{p_2},\ldots \delta_{p_{q^2+q+1}},\delta_{l_1},\delta_{l_2},\ldots,\delta_{l_{q^2+q+1}})
\]
where $p_1\in G$ (resp. $l_1\in G$) corresponds to the point $p$ (resp. the line $l$) of $P$ associated to the missing chamber, while $p_2,\ldots,p_{q+1}$ (resp. $l_2,\ldots,l_{q+1}$) correspond to an enumeration of the points of $l$ distinct from $p$ (resp. the lines adjacent to $p$ distinct from $l$) in $P$.

The Laplace operator $\Delta$ is of the form

\[
\Delta=\Id-\left(
\begin{array}{cc}
0&A\\
A^t&0
\end{array}  
\right),
\]
where $A$ is the normalized adjacency matrix, namely $A=D_q^{-1/2}A_0D_q^{-1/2}$, where $A_0$ is the usual (bipartite) adjacency matrix (we recall that $(A_0)_{i,j}=1$ when $l_i$ is adjacent to $p_j$), and $D_q$ is the diagonal matrix having $q+1$ down the diagonal, except on the first entry which is  $q$. Denote $\tilde q=\sqrt{q(q+1)}$. A computation shows that
\[
AA^t=A^tA=\left(
\begin{array}{ccc}
q\tilde q^{-2}&0_q&(q+1)^{-1}\tilde q^{-1}\mathbf{1}_{1\times q^2}\\
0_q&B_q&(q+1)^{-2}\mathbf{1}_{q\times q^2}\\
(q+1)^{-1}\tilde q^{-1}\mathbf{1}_{q^2\times 1}&(q+1)^{-2}\mathbf{1}_{q^2\times q}&C_{q^2}\\
\end{array}  
\right)
\] 
where  $B_q$ is the $q\times q$ matrix with 
\[
{q\over (q+1)^{2}}+{1\over {q(q+1)}}
\] 
on the diagonal, and ${q-1}\over {q(q+1)}$ elsewhere, while $C_{q^2}$  is the $q^2\times q^2$ matrix with $(q+1)^{-1}$ on the diagonal and $(q+1)^{-2}$ elsewhere. Set 
\[
\gamma_q=(q+1)\tilde q^{-1}=\sqrt{1+{1\over q}}.
\] 
We have
\[
(q+1)^2A^tA-q\id =\left(
\begin{array}{ccc}
1& 0_q & \gamma_q\mathbf{1}_{1\times q^2}\\
0_q&\gamma_q^2\mathbf{1}_{q\times q}&\mathbf{1}_{q\times q^2}\\
\gamma_q\mathbf{1}_{q^2\times 1}&\mathbf{1}_{q^2\times q}&\mathbf{1}_{q^2\times q^2}
\end{array}  
\right)
\]
whose kernel is of codimension 3. Consider the vectors
\[
\begin{array}{ccc}
v_1=\left(\begin{array}{c}1\\0_q\\ \gamma_q\mathbf{1}_{q^2\times 1}\end{array}\right)&
v_2=\left(\begin{array}{c}0\\ \gamma_q^2\mathbf{1}_{q\times 1}\\\mathbf{1}_{q^2\times 1}\end{array}\right)&
v_3=\left(\begin{array}{c}\gamma_q\\ \mathbf{1}_{q^2+q\times 1}\end{array}\right).
\end{array}
\]
A direct computation shows that
\begin{align*}
(q+1)^{2}A^tAv_1&=(q+1)v_1+q^2\gamma_qv_3\\
(q+1)^{2}A^tAv_2&=(q+q\gamma_q^2)v_2+q^2v_3\\
(q+1)^{2}A^tAv_3&=\gamma_qv_1+qv_2+(q^2+q)v_3
\end{align*}
Thus the problem reduces to computing the spectrum of the $3\times 3$ matrix
\[
\left(\begin{array}{ccc}
q+1&0&\gamma_q\\
0&2q+1&q\\
q^2\gamma_q&q^2&q^2+q
\end{array}\right)
\]
The characteristic polynomial is 
\[
-x^3+(q^2+4 q+2) x^2-(2 q^3+6 q^2+4 q+1) x+q^2 (q+1)^2
\]
where $(q+1)^2$ is an obvious root. The two other roots are
\[
x = (2 q+1-\sqrt{4 q+1})/2
\]
and 
\[
x = (2 q+1+\sqrt{4 q+1})/2.
\]
Therefore, the eigenvalue of $\left(
\begin{array}{cc}
0&A\\
A^t&0
\end{array}  
\right)$ we are interested in 
is 
\[
{1\over q+1} \sqrt{q+{1\over 2}+\sqrt{q+{1\over 4}}}={{1\over 2}+\sqrt{q+{1\over 4}} \over q+1}.
\]
 \end{proof}

A similar computation can be worked out when two chambers are missing (arguing according to the respective positions of the chambers in an apartment)  but the value of 
\[
\lambda_{0, k}(G):=\inf_{E\subset G^{(1)},\,\, |E|=k}\lambda_0(G\setminus E)
\]
when $G$ is a spherical building (of dimension 1) might be more challenging to find. It would also be interesting to make similar estimates with more general (e.g.\ smooth) targets $\lambda_{0, k}(G,Y)$ and deduce the corresponding fixed points theorem for the random groups following \cite{Wang, gromov01}.

The results in \cite{chambers} on buildings with chambers missing and \cite{rd}  on property RD can be applied to our  random groups in the $\tilde A_2$ case. This shows in particular that the random group is \emph{rigid} in the sense that it remembers the building that it comes from (see \cite[Section 5]{chambers}). Furthermore,   by Corollary 6 in \cite{rd}, the random group satisfies the Baum--Connes conjecture without coefficients.

\begin{theorem} If the deterministic data is of type $\tilde A_2$, then

\begin{enumerate}
\item the random group  $(\G,X)$ at density $\delta<\frac 1 2$ admits unique extension  $(\G,X)\leadsto (\G',X')$ (in the sense of \cite[Section 1]{chambers}) into a Euclidean building.
\item the random group  $\G$ at arbitrary density satisfy the Baum--Connes conjecture without coefficients.  
\end{enumerate}
\end{theorem}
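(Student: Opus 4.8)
The plan is to reduce both assertions to results already established in \cite{chambers} and \cite{rd}, the only real work being to check that the random group (which acts freely and cocompactly on the random universal cover) satisfies the hypotheses of the cited statements with overwhelming probability. Throughout I write $\G$ for $\G_A$ and $X$ for $\tilde X_A$, and I use that in the $\tilde A_2$ case the vertex links of the Bruhat--Tits building $X_3$ are thick spherical buildings of type $A_2$, i.e.\ incidence graphs of projective planes of order $q$, while the missing chambers of $X$ are the lifts of the $\G_A$-orbits of the random chamber set $A$.

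For (1), the first step is to record that at density $\delta<\tfrac12$ the random space is locally as simple as possible: by Proposition~\ref{P- r separated}(1), with overwhelming probability $A$ is $r$-separated for any prescribed $r$, so (taking $r\geq 3$) the missing chambers are pairwise vertex-disjoint, and hence each vertex link of $X$ is a spherical $A_2$ building with \emph{at most one} missing chamber. (Proposition~\ref{d12}(1) with $k=1$ gives the analogous control on balls, if one prefers.) Next I would invoke the extension machinery of \cite[Section~1]{chambers}: a spherical $A_2$ building with a single missing chamber has a unique extension, and the local-to-global principle of \cite{chambers} promotes this to a unique extension of the simply connected complex $X$ into a building $X'$, necessarily Euclidean of type $\tilde A_2$ since the link extensions are; functoriality of the extension makes the $\G$-action extend, yielding the unique extension $(\G,X)\leadsto(\G',X')$. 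Finally, since the extension is unique and the deterministic data is built from $X_3$, one gets $X'\cong X_3$, which is exactly the rigidity (``$(\G,X)$ remembers $X_3$'') of \cite[Section~5]{chambers}.

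For (2), I would first observe that $X=\tilde X_A$ is CAT(0): by Lemma~\ref{L - cohomological dim} it is contractible of dimension $2$, and removing open chambers from the $\tilde A_2$ building $X_3$ only deletes edges from the vertex links, hence cannot decrease their girth, so $X_A$ is nonpositively curved and its universal cover $X$ is CAT(0) by the Cartan--Hadamard theorem via Gromov's link condition. Thus in the $\tilde A_2$ case $(\G,X)$ is a group acting freely and cocompactly by isometries on a CAT(0) polyhedral complex of type $\tilde A_2$ with chambers missing, which is precisely the setting of \cite[Corollary~6]{rd}: such a group has property RD, and Lafforgue's theorem then gives the Baum--Connes conjecture without coefficients. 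Since this description of $X$ is valid for every value of $\delta$, the conclusion holds at arbitrary density.

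The only genuinely delicate point is in (1): one must match the precise separation/genericity hypothesis required by the extension theorem of \cite{chambers} with what Propositions~\ref{P- r separated} and~\ref{d12} deliver with overwhelming probability, i.e.\ identify the ``at most one missing chamber per link, pairwise far apart'' regime with the rigid regime of \cite{chambers}; once this is done, the rest is a direct application. Part (2) presents no real obstacle beyond verifying the (immediate) fact that $\G_A$ acts freely and cocompactly on the CAT(0) $\tilde A_2$-complex-with-chambers-missing $\tilde X_A$.
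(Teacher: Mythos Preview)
Your proposal is correct and follows essentially the same route as the paper: for (1) you use Proposition~\ref{P- r separated} to obtain $r$-separation at density $\delta<\tfrac12$ and then invoke the extension rigidity from \cite{chambers} (the paper cites \cite[Theorem~5.11]{chambers} specifically), and for (2) you reduce to \cite[Corollary~6]{rd} exactly as the paper does. Your write-up is in fact more detailed than the paper's, which is essentially a bare citation; your explicit verification that $\tilde X_A$ is CAT(0) via the link condition, and your spelling out of the ``at most one missing chamber per link'' consequence of $r$-separation, are useful elaborations rather than departures.
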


The first assertion is a   consequence of Proposition \ref{P- r separated} and \cite[Theorem 5.11]{chambers}.
It is unknown if $\G$ satisfies the Baum--Connes conjecture with coefficients. The conjecture without coefficients is also unknown for the other Coxeter types (in the irreducible case) but would follow  from the ``interpolation of property RD  to intermediate rank situations'' (see \cite{rd}), that is, between hyperbolic groups and (uniform) higher rank lattices of the corresponding type, provided that the latter groups satisfy property RD (which is conjectured by  A.\ Valette).


\begin{thebibliography}{00}


\bibitem{BS} Ballmann, W.; Swiatkowski, J. On $L\sp 2$-cohomology and property (T) for automorphism groups of polyhedral cell complexes. Geom. Funct. Anal. 7 (1997), no. 4, 615--645.




\bibitem{rd} Barr\'e S., Pichot M., Intermediate rank and property RD, arXiv 0710.1514.


\bibitem{chambers} Barr\'e S., Pichot M., Removing chambers in Bruhat-Tits buildings, arXiv 1003.4614.

\bibitem{Haagerup} Barr\'e S., Pichot M., An exotic group with the Haagerup property, arXiv 1205.1128.

\bibitem{HV}   Bekka, Bachir; de la Harpe, Pierre; Valette, Alain Kazhdan's property (T). New Mathematical Monographs, 11. Cambridge University Press, Cambridge, 2008.




\bibitem{Borel} Borel A. Cohomologie de certains groupes discrets et laplaciens p-adiques [d'apr\`es H. Garland], S\'eminaire Bourbaki, expos\'e no 437, 1973.


\bibitem{CS} Cartwright, Donald I.; Steger, Tim A family of $\tilde A\sb n$-groups. Israel J. Math. 103 (1998), 125--140.


\bibitem{Deligne} Deligne, P. Extensions centrales non r\'esiduellement finies de groupes arithm\'etiques. CR Acad. Sci. Paris (1978). 

\bibitem{FH} Feit, Walter; Higman, Graham
The nonexistence of certain generalized polygons. 
J. Algebra 1 1964 114--131. 

\bibitem{Garland} Garland H., p-adic curvature and the cohomology of discrete subgroups of p-adic groups, Ann. of Math., t. 97 (1973), p. 375–423.

\bibitem{ghys} Ghys, \'Etienne
Groupes al\'eatoires (d'apr\`es Misha Gromov,$\dots$) 
Ast\'erisque No. 294 (2004), viii, 173--204. 


\bibitem{gromov78} Gromov, M. Hyperbolic manifolds, groups and actions. Riemann surfaces and related topics: Riemann Surfaces and Related Topics - Proceedings of the 1978 Stony Brook Conference, 1978.

\bibitem{gromov81} Gromov M., Infinite groups as geometric objects,
Proc. Int. Congress Math. Warsaw 1983 1 (1984), 385-392.

\bibitem{gromov0} Gromov M., Hyperbolic groups. Essays in group theory, 75--263, 
Math. Sci. Res. Inst. Publ., 8, Springer, New York, 1987.

\bibitem{gromov1} Gromov, M. Geometric group theory. Vol. 2. 
Proceedings of the symposium held at Sussex University, Sussex, July 1991. Edited by Graham A. Niblo and Martin A. Roller. London Mathematical Society Lecture Note Series, 182. Cambridge University Press, Cambridge, 1993.

\bibitem{gromov00} Gromov M., Spaces and questions (Tel Aviv, 1999), Geom. Funct. Anal. Special Volume (2000), Part I, 118– 161.
 
\bibitem{gromov01} Gromov M., Random walk in random groups, preprint IHES (2001), 
GAFA, Geom. funct. anal., Vol. 13 (2003) 
 
 

\bibitem{LSV} Lubotzky, Alexander; Samuels, Beth; Vishne, Uzi
Explicit constructions of Ramanujan complexes of type $\tilde A\sb d$. (English summary) 
European J. Combin. 26 (2005), no. 6, 965--993.

\bibitem{Margulis}  Margulis, G. A. Discrete subgroups of semisimple Lie groups. Ergebnisse der Mathematik und ihrer Grenzgebiete (3) [Results in Mathematics and Related Areas (3)], 17. Springer-Verlag, Berlin, 1991.

\bibitem{mostow} Mostow, G. D. Strong rigidity of locally symmetric spaces. Annals of Mathematics Studies, No. 78. Princeton University Press, Princeton, N.J.; University of Tokyo Press, Tokyo, 1973.


\bibitem{ollivier} Ollivier, Yann A January 2005 invitation to random groups. Ensaios Matem√°ticos [Mathematical Surveys], 10. Sociedade Brasileira de Matematica, Rio de Janeiro, 2005. (See also ``January 2010 random groups updates")
 
\bibitem{ollivier-paper} Ollivier, Y. Sharp phase transition theorems for hyperbolicity of random groups, GAFA, Geom. Funct. Anal. 14 (2004), n° 3, 595--679.




 
\bibitem{Pansu} Pansu, Pierre 
Formules de Matsushima, de Garland et propriété (T) pour des groupes agissant sur des espaces symétriques ou des immeubles, Bull. Soc. Math. France, 126, 107-139 (1998).
 
\bibitem{PR} Platonov, Vladimir; Rapinchuk, Andrei
Algebraic groups and number theory. 
Pure and Applied Mathematics, 139. Boston, Academic Press (1994).
 
\bibitem{Rag72} Raghunathan, M. S. Discrete subgroups of Lie groups. Ergebnisse der Mathematik und ihrer Grenzgebiete, Band 68. Springer-Verlag, New York-Heidelberg, 1972.  
 

\bibitem{serre} Serre, Jean-Pierre Arbres, amalgames, ${\rm SL}\sb{2}$. R\'edig\'e avec la collaboration de Hyman Bass. Ast\'erisque, No. 46. Soci\'et\'e Math\'ematique de France, Paris, 1977.

\bibitem{tits-sphere} Tits, Jacques Spheres of radius $2$ in triangle buildings. I. Finite geometries, buildings, and related topics (Pingree Park, CO, 1988), 17--28, Oxford Sci. Publ., Oxford Univ. Press, New York, 1990.

\bibitem{Wang} Wang M.T,  ``A fixed point theorem of discrete group actions on Riemannian manifolds",  J. Differential Geom. 50, no. 2, 249--267, 1998. 

\bibitem{Zuk} \.Zuk, A, La propri\'et\'e (T) pour les groupes agissant sur les
poly\`edres, C. R. Acad. Sci. Paris, t. 323, 1996, p. 453--458.



\end{thebibliography}
\end{document}